\providecommand{\U}[1]{\protect\rule{.1in}{.1in}}
\def\theenumi{\arabic{enumi}}
\def\theenumii{\alph{enumii}}
\def\p@enumii{\theenumi.}
\def\theenumiii{\arabic{enumiii}}
\def\p@enumiii{(\theenumi)(\theenumii)}
\def\p@enumiv{\p@enumiii.\theenumiii}
\newcommand{\ii }{{\rm i} }
\newcommand{\N}{{\mathbb N}}
\newcommand{\A}{{\mathcal A}}
\newcommand{\Z}{{\mathbb Z}}
\newcommand{\R}{{\mathbb R}}
\newcommand{\C}{{\mathbb C}}
\newcommand{\G}{{\varphi}}
\newcommand{\bigO}{{\mathcal O}}
\newcommand{\diff}{\mathop{}\!\mathrm{d}}
\def\p{\partial}
\def\supp#1{{\textrm{supp}(#1)}}
\def\Im{{\rm Im}}
\def\Re{{\rm Re}}
\newtheorem{Thm}{Theorem}[section]
\newtheorem{theorem}[Thm]{Theorem}
\newtheorem{corollary}[Thm]{Corollary}
\newtheorem{remark}[Thm]{Remark}
\newtheorem{lemma}[Thm]{Lemma}
\newtheorem{proposition}[Thm]{Proposition}
\newtheorem{definition}[Thm]{Definition}
\newtheorem*{question}{Open problem}
\newtheorem{aquestion}[Thm]{Problem}
\newtheorem*{maintheorem}{Main Theorem}
\theoremstyle{definition}
\newtheorem*{example}{Example}
\numberwithin{equation}{section}
\newcommand{\cqfd}
{%
\mbox{}%
\nolinebreak%
\hfill%
\rule{2mm}{2mm}%
\medbreak%
\par%
}
\title{Small-time local controllability of a KdV system for all critical lengths}
\author{Jingrui Niu, Shengquan Xiang}
\begin{document}
\address[Jingrui Niu]{Sorbonne Université, CNRS, Université Paris Cité, Inria Team CAGE, Laboratoire Jacques-Louis Lions (LJLL), F-75005 Paris, France
}
\email{jingrui.niu@sorbonne-universite.fr}

\address[Shengquan  Xiang]{School of Mathematical Sciences, Peking University, 100871, Beijing, China.}
\email{shengquan.xiang@math.pku.edu.cn}
   
\subjclass[2020]{35Q53, 76B15, 93C20}
	
\keywords{KdV, small-time local controllability, obstruction, nonlinearity, power series expansion}

    \vspace{5mm}
	\begin{abstract}
     In this paper, we consider the small-time local controllability problem for the KdV system on an interval with a Neumann boundary control.   Rosier discovered in \cite{Rosier97} that the linearized system is uncontrollable if and only if the length is critical, namely $L=2\pi\sqrt{(k^2+ kl+ l^2)/3}$ for some integers $k$ and $l$.  

     Coron and Cr\'epeau \cite{CC04} proved that the nonlinear system is small-time locally controllable even if the linearized system is not, provided that $k= l$ is the only solution pair. Later,  Cerpa \cite{Cerpa07}, Cerpa and Crepeau \cite{CC09} showed that the system is large-time locally controllable for all critical lengths.   Coron, Koenig and Nguyen \cite{CKN} found that the system is not small-time locally controllable if $2k+l\not \in 3\mathbb{N}^*$. 
     
    We demonstrate that if the critical length satisfies $2k+l \in 3\mathbb{N}^*$ with $k\neq l$, then the system is not small-time locally controllable. This paper, together with the above results, gives a complete answer to the longstanding open problem on the small-time local controllability of KdV on all critical lengths since the pioneer work by Rosier \cite{Rosier97}.
    \end{abstract}

\maketitle

\setcounter{tocdepth}{1}
\tableofcontents

\section{Introduction}\label{sec: introduction}
In this article, we are interested in the {\it small-time local controllability} property of the classical controlled Korteweg–de Vries (KdV) model on an interval: 
\begin{equation}\label{intro-sys-KdV}
\left\{
\begin{array}{ll}
\p_t y + \p_x^3y  + \p_x y + y  \p_x y = 0, &  \text{ in }  (0, T)\times(0, L), \\
y(t, 0) = y(t, L) = 0, & \text{ in } (0, T), \\
\p_x y(t , L) = u(t), & \text{ in } (0, T),\\
y(0, \cdot)  = y_0 (\cdot), &\text{ in } (0, L),
\end{array}\right.
\end{equation}
where $y$ is the state and $u$ is the control.

The KdV equation is an important nonlinear model that has been extensively studied in the literature from various perspectives. The well-posedness problem has been extensively studied in the literature, including the works \cite{Bona-Smith,Bourgain93-2, Bourgain97,KPV93,KPV96  ,CKSTT,KV-19}, and among others. The stability of solitary wave of KdV is investigated in the works  \cite{MV,MM-01,KV-22} along with the references therein. 
We refer the reader to the books \cite{KTV-book,Tao06} for an overview of recent mathematical advancements related to this equation.

\subsection{An open problem}\label{sec: history}
Equation \eqref{intro-sys-KdV} is among the most investigated models emphasizing nonlinear controls. Its control properties are strongly related to the interaction of the nonlinear term. 
Let us briefly review the
existing results. We also refer to the surveys \cite{RZ09, Cerpa14} and the references therein.

In the pioneer paper \cite{Rosier97}, Rosier introduced the so-called {\it critical lengths set}:
\begin{equation}\label{eq: defi of critical set-intro}
     \mathcal{N}:=\{2\pi\sqrt{\frac{k^2+kl+l^2}{3}}:k,l\in\N^*  \}.
\end{equation}
Using spectral theory and complex analysis techniques, he  proved that the
linearized system is controllable if and only if the length of the interval $L \not \in \mathcal{N}$,
\begin{equation}\label{eq: intro-linear-KdV}
\left\{
\begin{array}{ll}
\p_t y + \p_x^3y  + \p_x y = 0, &  \text{ in }  (0, T)\times(0, L), \\
y(t, 0) = y(t, L) = 0, & \text{ in } (0, T), \\
\p_x y(t , L) = u(t), & \text{ in } (0, T).
\end{array}\right.
\end{equation}

When $L \in \mathcal{N}$ and $u=0$, the linear  system \eqref{eq: intro-linear-KdV} admits a family of time-periodic ``traveling wave" solutions  $e^{\ii t\lambda}\G_{\lambda}(x)$, where $(\G_{\lambda},\ii \lambda)$  is an eigenfunction
\begin{equation}\label{eq: type-1-eigenfunction-intro}
  \left\{
 \begin{array}{c}
      \G'''_{\lambda}+\G'_{\lambda}+\ii\lambda \G_{\lambda}=0,  \\
     \G_{\lambda}(0)=\G_{\lambda}(L)=\G'_{\lambda}(0)=\G'_{\lambda}(L)=0. 
 \end{array}
 \right.    
 \end{equation}
 Notice that such an eigenfunction satisfies four boundary conditions. Indeed, each integer solution $(k, l)$ of 
\begin{equation}\label{eq:L=kl}
    L= 2\pi\sqrt{\frac{k^2+kl+l^2}{3}},
\end{equation}
corresponds to such an eigenfunction.  
 Thus
there are only finitely many linearly independent eigenfunctions of this form, and all these functions form a subspace of $L^2(0, L)$,
\begin{equation}\label{eq: defi-M}
M:=\mathrm{Span}\{\Re \G_{\lambda}, \Im \G_{\lambda}: \G_{\lambda} \mbox{ is defined in \eqref{eq: type-1-eigenfunction-intro}}\}   
\end{equation}
$M$ is also called the unreachable subspace for the linear system  \eqref{eq: intro-linear-KdV}. Since the solution of  \eqref{eq: intro-linear-KdV} with initial state 0 and any control can never enter $M$. Meanwhile, there is a reachable subspace $H$. The state space can be decomposed as  $L^2(0,L)=M\oplus H$.

\vspace{2mm}
When $L\not \in \mathcal{N}$, the system \eqref{sec: introduction} is {\it small-time locally controllable} thanks to the good property of the linearized system. However, when $L \in \mathcal{N}$, since the linear system is uncontrollable, it was widely open whether the nonlinear system \eqref{sec: introduction} is small-time locally controllable or not:

\begin{question} \label{OQ} 
Is the system  \eqref{intro-sys-KdV}  small-time locally controllable for all critical lengths? 
\end{question}
This open problem has since garnered great attention. Over the past two decades, numerous partial results have been established, yet this problem continues to be revisited.  Notably, it has been highlighted many times in subsequent works, including  \cite[Remark 4]{CC04}, \cite[Open problem 8.8]{Coron07}, 
\cite[Open problem 10]{Coron07-Survey}, \cite[Remark 1.7]{Cerpa07}, \cite[Section 7.2]{M18},   \cite[Open problem 1.6]{CKN}. 

\vspace{3mm}
A significant step forward, yielding a somewhat unexpected result on this open problem, was accomplished by Coron and Crépeau \cite{CC04}. They considered the critical lengths such that $\dim M= 1$  and showed that even though the linearized system is uncontrollable the nonlinear system is small-time locally controllable.  
They introduced the so-called {\it power series expansion} method. Since then this method has become a powerful tool in the study of nonlinear control problems and in the understanding of the above-mentioned open problem. 
 Its idea is to search a control $u$ in the form:
\[
u=\varepsilon u_1+\varepsilon^2u_2+\varepsilon^3u_3+\cdots.
\]
The corresponding solution $y$ to \eqref{intro-sys-KdV} can formally decompose as
\[
y=\varepsilon y_1+\varepsilon^2 y_2+ \varepsilon^3y_3+\cdots.
\]
And the nonlinear term $y\p_xy$ can be written into
\[
y\p_xy=\varepsilon^2y_1\p_xy_1+\varepsilon^3(y_1\p_xy_2+y_2\p_xy_1)+\cdots.
\]
The first order term $y_1$ is a solution of the linearized system \eqref{eq: intro-linear-KdV} with $u=u_1$. While the second order term $y_2$ satisfies the linear system with a source term:
\begin{equation}\label{eq: y-2-KdV}
\left\{
\begin{array}{ll}
\p_t y_2 + \p_x^3y_2 + \p_x y_2=  -y_1\p_xy_1, &  \text{ in }  (0, T)\times(0, L), \\
y_2(t, 0) = y_2(t, L) = 0, & \text{ in } (0, T), \\
\p_x y_2(t , L) = u_2(t), & \text{ in } (0, T).
\end{array}\right.
\end{equation}
And continue to find equations for $y_3$ and even higher order terms.  Let the initial states $y_1|_{t=0}=y_2|_{t=0}=0$. The key ingredient is to find $u_1$ and $u_2$ such that the final states satisfy $y_1|_{t=T}=0$ and the  projection of $y_2|_{t=T}$ on $M$ is a given (nonzero) element in $M$. A key quantity associated with this projection is the quantity $Q_M$. For any element $\varphi\in M$ we define
\begin{equation}\label{eq: defi-Q_M-intro}
Q_M(\varphi; y):=\int_{0}^{\infty} \int_0^L |y (t, x)|^2 e^{-\ii  pt }\varphi'(x) dx dt,
\end{equation}
where $p\in\R$.\footnote{Here, we choose $(\varphi,\ii p)$ to be an eigenmode in $M$ corresponding to the stationary KdV operator.} 

\vspace{3mm}
To tackle this open problem on more complicated situations, in \cite{Cerpa07} Cerpa considered the case when $\dim M=2$. Using the power series expansion method he proved a {\it large-time local controllability} result. Later on, Cerpa and Cr\'epeau proved large-time local controllability for all critical lengths in \cite{CC09}.
In parallel, in the study of stabilization or asymptotic stability problems of KdV systems, the dimension of unreachable subspace $M$ also plays an important role.  We make a detailed description of these properties and how the dimension influence the results in Section \ref{sec: revisit of literature}. Finally, we summarize some existing results in Table \ref{tab: Review of existing results}.

In all these results the critical lengths are naturally classified by ``two classes":  1)  when the dimension of $M$ is even thus every pair $(k, l)$ satisfying the algebraic equation \eqref{eq: defi of critical set-intro} verifies $k\neq l$. In this circumstance, it suffices to perform a power series expansion to the second order to obtain large-time local controllability,  exponential stabilization,  asymptotic stability without control, and among other properties;   2)  when the dimension of $M$ is odd thus there exists $k= l$ satisfying \eqref{eq: defi of critical set-intro}. Then one needs to rely on an expansion up to the third order to obtain the required properties. See 
Section \ref{sec: revisit of literature} for more details and \cite{coron-rivas-xiang} for a nice description of this classification.
 
\par
\begin{table}[h]
\renewcommand{\arraystretch}{1.3}
\begin{tabular}{|m{2.4cm}<{\centering}|m{2cm}<{\centering}|m{2cm}<{\centering}|m{2cm}<{\centering}|m{2cm}<{\centering}|m{2cm}<{\centering}|}
\hline
$\dim M$ &$0$ &$1$ & $2$ & odd&even \\
\hline
Small-time controllability& \cite{Rosier97} & \cite{CC04} & Partial result  \cite{CKN}&Unkonwn& Partial result\cite{CKN} \\
\hline
Large-time controllability & \cite{Rosier97}& \cite{CC04}  &  \cite{Cerpa07} &  \cite{CC09} & \cite{CC09}\\
\hline
Exponential Stabilization & \cite{Coron-Lv} & Unknown   & \cite{coron-rivas-xiang}  & Unknown &\cite{coron-rivas-xiang} \\
\hline
Asymptotic stability & \cite{PVZ} &  \cite{Chu-Coron-Shang}  \cite{Nguyen}   &  \cite{Tang-Chu-Shang-Coron}  \cite{Nguyen} & Unknown & Partial result \cite{Nguyen} \\
\hline
\end{tabular}
\vspace{1em}
\caption{Existing results based on the parity of dim $M$}
\label{tab: Review of existing results}
\end{table}
In 2020, another major step has been made by Coron--Koenig--Nguyen in \cite{CKN} concerning this challenging open problem. For the first time, they found a negative result on the small-time local controllability of this classical model:   the {\it obstruction} to small-time local controllability if the critical length admits some pair satisfying $2k+l\notin 3\mathbb{N}^*$.  At least for technical reasons, this result indicates that the case $(k, l)= (2, 1)$ may be different from the case $(k, l)= (4, 1)$. 
\vspace{3mm}

Recently, the {\it obstruction to small-time local controllability problem} has garnered considerable attention for both ODE and nonlinear PDE models.  For NLS with interior control, Coron \cite{Coron06} and Beauchard--Morancey \cite{2014-Beauchard-Morancey-MCRF} used the iterated Lie bracket (see \cite{1987-Sussmann-SICON} in finite-dimensional cases) to find a direction which one could not move in a small time. Marbach proved a beautiful result concerning the obstruction to small-time local controllability of a viscous Burgers equation in \cite{M18}, where a coercive estimate is introduced to prove the obstruction.  Later on a nonlinear parabolic equation was considered by Beauchard and Marbach \cite{BM20} with interior controls. More recently, this property has been systematically developed by Beauchard, Marbach and their coauthors in a series of works \cite{BM-JDE,BDE20,BLBM,BM24,BMP-sch}, and in particular, it has been used to resolve the challenging problem in numerical splitting methods \cite{beauchard-splitting}. 

Finally, we note that substantial progress has been made on the controllability and stabilization of KdV equations with various types of controls. For results on KdV equations with internal controls, we refer, among others, to \cite{CRPR, RZ96, Rosier-Zhang-06, LRZ10}. Regarding other boundary-controlled KdV models, relevant works include but not limited to \cite{CC13, glass-guerrero, 
KX, nguyen2023, nguyen2024stabilization, Rosier04, Xiang-18, Xiang-19}.

\subsection{Statement of the result}
In this paper, we consider the remaining case of the open problem,  where there exists $(k, l)$ such that $2k+l\in 3\mathbb{N}^*$ and $k\neq l$, and demonstrate the following negative result.

\begin{maintheorem}\label{thm-main} 
Let $L\in \mathcal{N}$. Assume there exist different integers $k, l$ satisfying  $2 k + l  \in  3\N^*$ such that \eqref{eq:L=kl} holds. 
Then system \eqref{intro-sys-KdV}  is not
small-time locally null-controllable with controls in $H^{\frac{4}{3}}$ and initial and final datum in $H^4(0, L) \cap H^1_0(0, L)$. 

More precisely, there exist $T_0>0$ and $\varepsilon_0 > 0$ such
that,  for all $\delta >0$, there is $y_0 \in H^4(0, L) \cap H^1_0(0, L)$ with $\| y_0\|_{H^4(0, L)} <
\delta$ such that for all control $u \in H^{\frac{4}{3}}(0, T_0)$ with $\| u\|_{H^{\frac{4}{3}}(0, T_0)} < \varepsilon_0$ and the compatible condition $u(0) = y_0'(L)$, we
have
\[
y(T_0, \cdot) \not \equiv 0,
\]
where $y \in C\left([0, T_0]; H^4(0, L) \right)  \cap L^2\left([0, T_0]; H^5(0, L)\right)$ is the unique solution of \eqref{intro-sys-KdV} with initial state $y_0$ and control $u$.
\end{maintheorem}

This result, together with \cite{CC04} and \cite{CKN}, and the novel classification of critical sets (see Definition \ref{def:new:classification} and Remark \ref{rem:S123}), gives a complete answer to the open problem of small-time local controllability.
    \begin{itemize}
        \item[(1)] Case $L\in \mathcal{N}^1$.  In this class the equation \eqref{eq:L=kl} only admits one solution, and this solution is given by that is $k= l$. The dimension of the linearly uncontrollable subspace $M$ is one.\\  Coron and Crepeau proved that the system is small-time locally controllable \cite{CC04}. 
        \item[(2)] Case $L\in \mathcal{N}^2$.  The equation only admits pairs from $\mathcal{S}_2$, namely $2k+l\not \in 3\N^*$. In this class the dimension of $M$ is even. But meanwhile, there are many critical lengths from $\mathcal{N}^3$ such that the dimension of $M$ is also even.\\  Coron, Koenig and Nguyen's result \cite{CKN} shows that the system is not small-time locally controllable. On the other hand, the system is large-time locally controllable due to Cerpa \cite{Cerpa07}, Cerpa and Crepeau \cite{CC09}. 
        \item[(3)] Case $L\in \mathcal{N}^3$.  In this final class, the equation \eqref{eq:L=kl}  must have solutions from $\mathcal{S}_3$ and may also include a solution from $\mathcal{S}_1$; specifically, there exists a solution pair such that  $2k+l \in 3\mathbb{N}^*$. And the dimension of $M$ can be any integer that is strictly greater than one. Consequently, the dynamical behavior of both the linear and nonlinear KdV systems in this case is the most intricate.   \\
        According to  \cite{Cerpa07, CC09}, the system is large-time locally controllable. Now, thanks to our Main Theorem, we know that the system is not small-time locally controllable. 
    \end{itemize}
\begin{remark}
A key ingredient is on the application of a new classification of critical lengths, derived from quantitative and asymptotic analysis for the eigenmodes of stationary KdV operators; see Section \ref{sec: critical length} for details. Thus, even within the situation when dim $M= 2$, where the two-dimensional subspace $M$ is generated by the same rotation process,   the cases from $\mathcal{N}^2$ and $\mathcal{N}^3$ display completely different behavior. 

We believe this new criterion is fundamental to the critical lengths and is
applicable in addressing various problems associated with the KdV systems \eqref{intro-sys-KdV}, including issues of asymptotic stability, controllability, and stabilization. 
 Further discussion on these perspectives can be found in Section \ref{sec: further perspectives}.  
  Moreover, our Main Theorem  serves as a concrete illustration of its successful application.
\end{remark}

One shall mention that the spaces used in the main theorem and in \cite{CKN} are higher than $L^2(0, L)$.  
It would be interesting to extend these results to address the small-time local controllability problem in the less regular spaces, or even in the $L^2$ space. This may require an even more refined understanding of how regularity influences controllability.

\subsection{Strategy of the proof}\label{sec: strategy of the proof}
Our proof primarily relies on the newly established classification of critical lengths and the related spectral analysis. With this insight, we refine the power series expansion method and establish a trapping direction for the KdV system \eqref{intro-sys-KdV} to deduce the obstruction result. 
\subsubsection{A novel classification of critical lengths: two types of eigenfunctions from the spectrum point of view.}\label{subsubsec:intro:nocelclass}
As we have seen in the aforementioned papers, the study of KdV systems at critical lengths, including controllability, stability, stabilization, etc., essentially relies on the analysis on eigenmodes of the stationary KdV operator at critical lengths. 

Assume that for some pair $(k, l)$ there is $L=2\pi\sqrt{(k^2+kl+l^2)/3}\in\mathcal{N}$.  We consider the eigenfunctions of the operator given by \eqref{def:ope:A}, thus 
\begin{equation}\label{eq: intro-eigenmode-eq}
\left\{
\begin{array}{ll}
    \G'''+\G'+\ii\lambda\G=0, &\mbox{ in }(0,L)  \\
     \G(0)=\G(L)=0,& \\
     \G'(0)= \G'(L).
\end{array}
\right.
\end{equation}
By the choice of $L$ there is always a solution $\G$ satisfying $\G'(0)=\G'(L)=0$. We call such an eigenfunction ``Type 1". 
Type 1  eigenfunctions are related to the controllability of the linear system.

We observe that if and only if $2k+l\in 3\N^*$, there exists another linear independent eigenfunction $\Tilde{\G}$ satisfying \eqref{eq: intro-eigenmode-eq} with $\Tilde{\G}'(0)=\Tilde{\G}'(L)\neq0$, which is precisely given by $\tilde \G= \G'$. This eigenfunction is called of ``Type 2".

Rosier has extensively studied Type 1 eigenmodes and demonstrated that the unreachable subspace $M$ is generated by them.   Notice that even though Type 2 eigenfunctions do not belong to $M$, the control property of the nonlinear system is indirectly influenced by them; see, for instance, the equation \eqref{eq: defi-Q_M-intro}. We shall observe this influence in the present article when considering the obstruction to small-time controllability of \eqref{intro-sys-KdV}. Moreover, in a forthcoming paper, we further demonstrate that Type 2 eigenfunctions can affect other properties such as the stability and controllability of KdV equations.

\vspace{3mm}
Based on this key observation, we discover that when investigating control related problems,  the critical lengths should be distinguished by three classes according to the property of the pairs $(k,l)$ satisfying \eqref{eq: defi of critical set-intro}: $k=l$, $2k+l\notin3\N^*$, and $2k+l\in3\N^*$ and $k\neq l$.   Different types of pairs $(k, l)$ may provide distinct behaviors concerning eigenvalues and eigenfunctions.   This is to be compared with the classification widely used in the previous works depending on the parity of the dimension of $M$; see for instance \cite{coron-rivas-xiang} for a description of such a classification.  See Section \ref{sec: revisit of literature}--\ref{sec: new classification} for details and effects on these old and new classifications.

\begin{table}[h]
\renewcommand{\arraystretch}{1.5}
\begin{tabular}{|m{2.4cm}<{\centering}|m{3.5cm}<{\centering}|m{3.8cm}<{\centering}|m{3.8cm}<{\centering}|}
\hline
&$k=l$ & $2k+l\in 3\N^*, k\neq l $ & $2k+l\notin 3\N^*$ \\
\hline
Eigenvalues&  zero (double)&  nonzero (double) & \;  nonzero (simple)\; \\
\hline
Eigenfunctions&\; both Type 1 and 2\; &\; both Type 1 and 2\; & only Type 1\\
\hline
Small-time controllability &  Positive, \cite{CC04}  & Negative, {\bf Main Thm} & Negative, \cite{CKN}\\
\hline
\end{tabular}
\vspace{3mm}
\caption{Different types of $(k,l)$ satisfying  \eqref{eq:L=kl}.}
\label{tab: (k,l)-type-small-time}
\end{table}

\subsubsection{A trapping direction}
A major step is to construct a trapping direction $\Psi(t,x)$, and prove that the solution $y$ to \eqref{intro-sys-KdV} with initial condition $y(0,\cdot)=\varepsilon\Psi(0,\cdot)$ and $u= 0$ satisfies:
\begin{equation}\label{eq: trap-est-intro}
\|y(t,\cdot)-\varepsilon\Psi(t,\cdot)\|_{L^2(0,L)}\lesssim\varepsilon^2,\mbox{ for }t \mbox{ small enough}.    
\end{equation}
\begin{itemize}
    \item \textit{ A reduction approach.} This reduction approach is raised by Coron-Koenig-Nguyen in \cite{CKN}, which is inspired by the power series expansion method introduced by Coron-Cr\'epeau in \cite{CC04}, and the coercive type argument dues to Marbach \cite{M18}. Thus we reduce the procedure of establishing a trapping direction by relating it to a quantitative coercive estimate of the quantity $Q_M$. The obstruction to small-time controllability is linked with this trapping direction. 
    \item \textit{The remaining case under the new classification is degenerate for the reduction approach.}  Due to the appearance of Type 2 eigenmodes in the case $2k+l\in3\N^*$ and the non-triviality of eigenvalues, we cannot detect the non-vanishing leading term of $Q_M$ in the same order as in \cite{CKN}. In this sense, we say  the remaining case is technically {\it degenerate}.  We need to perform a more delicate analysis to recover the leading term at a higher order, which is compatible with our classification.
    \item \textit{Higher-order asymptotic expansion.} In this degenerate case, we point out that there are some technical difficulties due to the appearance of Type 2 eigenmodes associated with unreachable pairs $2k+l\in3\N^*$. To handle these, we introduce a higher-order expansion toolbox involving pseudodifferential operators, a quantitative embedding for compactly supported functions, etc. 
\end{itemize}
\subsubsection{Obstruction to small-time controllability.}The last step is based on a contradiction argument. Suppose that $y(t,\cdot)\equiv0$ for $t\geq T$, we derive the estimate \eqref{eq: trap-est-intro} for the well-prepared trapping direction $\Psi$. Therefore, at time $T$, $y(T,\cdot)$ is sufficiently close to $\Psi(T,\cdot)$, which implies that $y(T,\cdot)\neq0$, which completes contradiction argument.

\subsection{Organize of the article}

The rest part of this paper is organized as follows.

Section \ref{sec: critical length} is devoted to introducing our new classification criteria on critical lengths. After a review of previous literature in Section \ref{sec: revisit of literature}, we present the details of classification in Section \ref{sec: new classification}, followed by its application to the characterization of the dimension of $M$ in Section \ref{sec: dimension characteristics} and further application in Section \ref{sec: further perspectives}. 

In Section \ref{sec: preparations}, we prepare some useful elements necessary for the proof of Main Theorem, including well-posedness and control formulation of KdV equations in Section \ref{sec: well-posedness} and Section \ref{sec: control-0-0}, along with basic microlocal analysis elements in Section \ref{sec: 1d microlocal analysis}. 

Section \ref{sec: An unreachable direction} is dedicated to constructing a trapping direction (with details in Section \ref{sec: construction trap direction}) and proving the coercive property associated with this direction. More precisely, we give a brief idea of proof in Section \ref{sec: idea of proof-trapping direction}. Then we establish an asymptotic expansion for our key quantity $Q_M$ in Section \ref{sec: Asymptotic analysis of B} and prove the quantitative coercive proposition in Section \ref{sec: quantitative estimates}. 

Section \ref{sec: obstruction} present the proof of Main Theorem based on a contradiction argument. We finish this paper with three technical appendices.

\section{A novel classification on critical lengths}\label{sec: critical length}

The main contribution of this section is on the following key observation: even within the class $\mathcal{N}_2$, where the two-dimensional subspace $M$ is generated by the same rotation process,   the cases $2k+l\in 3\N^*$ and $2k+l\notin 3\N^*$ exhibit fundamentally different behavior. 
This distinction arises from various factors, including the presence of Type 2 eigenmodes of stationary KdV operators  and the limiting spectral analysis, which will be detailed in a forthcoming paper. Furthermore, this difference has important implications for both control and analysis problems.

Based on this observation we propose a new classification criterion for critical lengths. Recall that the widely used classification is based on the parity of dim $M$ or the Type 1 eigenfunctions. 
While the new classification scheme is based on the congruence relation modulo $3$, i.e., whether $2k+l$ is an element of $3\N^*$ or not, and whether $k= l$ or not.

This section is structured as follows. In Section \ref{sec: revisit of literature}, we outline the mathematical reasoning, specifically the rotation process, underlying the widely used existing classification and provide a literature review based on this criterion. In Section \ref{sec: new classification}, we introduce our proposed classification rule. Section \ref{sec: dimension characteristics} focuses on a quantitative analysis of the dimension of $M$ under the new classification. We believe this new criterion is fundamental and applicable to a broad range of problems, which we summarize in Section \ref{sec: further perspectives}.

\subsection{Revisit the literature: the study based on the parity of dim $M$}\label{sec: revisit of literature}

Following the pioneering work of \cite{Rosier97}, extensive research has been conducted, yielding diverse results based on the properties of the critical set $\mathcal{N}$.
As the dimension of the linearly unreachable subspace $M$ increases, the interaction between the linear and nonlinear components of the system becomes more intricate. Notably, the set $\mathcal{N}$ comprises countably many critical lengths.  A comprehensive understanding of the system's controllability, stabilization, and asymptotic stability necessitates classifying the critical lengths and conducting a detailed analysis of each category.

Over the past two decades, as the understanding of the nonlinear system \eqref{intro-sys-KdV} has deepened, the following classification and investigation criteria have been introduced. This rule is essentially based on the parity of dimension $M$; see for instance \cite[Introduction]{coron-rivas-xiang}.
\begin{itemize}
 \item[0.]  $\mathcal{C} := \mathbb{R}^{+}\setminus{\mathcal{N}}$. Then $M = \{ 0 \}$.
\item[1.] $\mathcal{N}_1 := \big{\{} L \in \mathcal{N};$ there exists one and only one ordered pair $(k, l)$ satisfying  \eqref{eq: defi of critical set-intro} and one has $k= l\big{\}}$. Then  the dimension of $M$ is 1.
\item[2.] $\mathcal{N}_2 := \big{\{} L \in \mathcal{N};$ there exists one and only one ordered pair  $(k,l)$ satisfying  \eqref{eq: defi of critical set-intro} and one has $k>l\big{\}}$. Then  the dimension of $M$ is 2.
\item[3.] $\mathcal{N}_3 := \big{\{} L \in \mathcal{N};$ there exist $n \geqslant 2$ different ordered pairs $(k,l)$ satisfying  \eqref{eq: defi of critical set-intro}, and none of them satisfies $k= l\big\}$. Then the dimension of $M$ is $2n$.
\item[4.] $\mathcal{N}_4 := \big{\{} L \in \mathcal{N};$ there exist $n \geqslant 2$ different ordered pairs $(k,l)$ satisfying  \eqref{eq: defi of critical set-intro},  and one of them satisfies $k= l$ $\big{\}}$.  Then  the dimension of $M$ is $2n-1$.
\end{itemize}
These sets are disjoint and  
\begin{equation}\label{eq:old:classifi}
    \mathbb{R}^{+} = \mathcal{C} \cup \mathcal{N}_1 \cup \mathcal{N}_2 \cup \mathcal{N}_3 \cup \mathcal{N}_4,\,\; 
\mathcal{N}= \mathcal{N}_1 \cup \mathcal{N}_2 \cup \mathcal{N}_3 \cup \mathcal{N}_4. 
\end{equation}

\subsubsection{The controllability problem}

First,  Rosier solved the small-time
local controllability problem of the nonlinear equation for $L\in \mathcal{C}$.
The first result on critical lengths dues to  Coron and Cr\'epeau \cite{CC04}, they proved the small-time local controllability  for 
$L\in \mathcal{N}_1$, namely  $L= 2k\pi$ and 
\begin{equation}\label{eq: 2pi-condition}
\not\exists(m,n)\text{ such that }m^2+n^2+mn=3k^2 \text{ and }m\neq n.    
\end{equation}
More precisely, they considered the typical case that $k= l= 1$ thus $L= 2\pi$, where the linearly unreachable subspace $M=\mathrm{span}\{1-\cos{x}\}$, and observed that the quantity $Q_M$ is identically $0$. Recall its definition in \eqref{eq: defi-Q_M-intro}.
\[
Q_M(1-\cos{x}; y)=\int_{0}^{T} \int_0^L y (t, x)^2 \sin{x} \diff x \diff t\equiv0,
\]
provided that $y$ is a solution to linearized KdV system satisfying $y(0,x)=y(T,x)=0$. The triviality of the quantity $Q_M$ implies that the second order approximated system on $y\sim \varepsilon y_1+ \varepsilon^2 y_2$ is still uncontrollable. Indeed, the projection of $y_2$ on $M$ does not change despite the nonlinear interaction:
 \begin{gather}
 \left\{
 \begin{array}{l}
 \left\{
 \begin{array}{l}
\p_t y_1 + \p_x^3y_1 + \p_x y_1=  0,   \text{ in }  (0, T)\times(0, L), \\
y_1(t, 0) = y_1(t, L) = 0,  \text{ in } (0, T), \\
\p_x y_1(t , L) = u_1(t),  \text{ in } (0, T).
 \end{array}
 \right.
 \\
  \left\{
  \begin{array}{l}
\p_t y_2 + \p_x^3y_2 + \p_x y_2=  -y_1\p_xy_1,   \text{ in }  (0, T)\times(0, L), \\
y_2(t, 0) = y_2(t, L) = 0,  \text{ in } (0, T), \\
\p_x y_2(t , L) = u_2(t),  \text{ in } (0, T).
 \end{array}
 \right.
 \end{array}
 \right.
 \end{gather}
Through a third-order  expansion, they arrived at both directions $\pm(1-\cos{x})$ within any short time, which ensures the small-time local controllability.

\vspace{3mm}
Later on, Cerpa considered the case $L\in\mathcal{N}_2$ in \cite{Cerpa07}. Following the idea in \cite{CC04}, he proved a large-time local controllability result of \eqref{intro-sys-KdV} for $\dim M=2$. 
He also showed that $\mathcal{N}_2$ contains infinitely many elements.  In this circumstance, the behavior of the quantity $Q_M$ is more complex. Using complex analysis techniques Cerpa showed that it is not identically $0$. Thanks to this observation, he showed that the second order approximated system can arrive at a certain direction $\varphi_0\in M$ at any short time. 

The large-time controllability is fulfilled by a rotation process. While the rotation from $\varphi_0$ to any direction $e^{\ii pt}\varphi_0$ takes a time $T\geq \frac{\pi}{p}$.
More precisely, let us consider the case that $k=2, l=1$, thus $\dim M=2$ and $M=\mathrm{Span}\{\varphi_1,\varphi_2\}$ with $\varphi_1+\ii\varphi_2$ being an eigenfunction associated with an eigenvalue $\ii p$. 
\begin{figure}[htp]
 \begin{minipage}[c]{0.3\textwidth}
  {\centering
  \begin{tikzpicture}[scale=1]
    \draw (0,0) circle (2cm);
    \filldraw[black] (2,0) circle (1pt) node[anchor=north] {$\varphi_1$};
    \filldraw[black] (0,2) circle (1pt) node[anchor=east] {$\varphi_2$};
    \draw[-latex, line width=0.5mm] (1.41,1.41) arc (45:5:2cm);
    \draw[-latex, line width=0.5mm] (-1.41,-1.41) arc (225:185:2cm);
\end{tikzpicture}}
 \end{minipage}
 \begin{minipage}[c]{0.55\textwidth}
 
\begin{equation*}
\left\{
\begin{aligned}
    \frac{\diff}{\diff t}(y(t), \varphi_1)_{L^2(0,L)} &= -p (y(t), \varphi_2)_{L^2(0,L)}, \\
    \frac{\diff}{\diff t}(y(t), \varphi_2)_{L^2(0,L)} &= \phantom{-}p (y(t), \varphi_1)_{L^2(0,L)},
\end{aligned}
\right.
\end{equation*}

\label{fig: rotation}
 \end{minipage}
\end{figure}
\noindent
Then one notice that the solution $y$ to \eqref{intro-sys-KdV} projects on $M$ verifies a rotation. This implies that $\frac{2\pi}{p}$ is a rotation period.  Since the solution can reach the direction $\varphi_0= \alpha \varphi_1+ \beta \varphi_2$ within a short time $T_0$, the rotation process indicates that the solution can further reach all states in $M$ provided that the time period is larger than $T_0+ \frac{2\pi}{p}$.

This rotation strategy can not answer the open problem on small-time controllability for $\dim M=2$. 
Because this process is so natural such that,  since then people do not further distinguish different $L$ such that dim$M=2$. For example $(k, l)= (2, 1)$ and $(k, l)= (4, 1)$ are treated in the same rotation method, and it was believed that there is no essential difference between these two specific cases. 

\vspace{3mm}
Following this rotation approach introduced in \cite{Cerpa07} and the power series expansion method, in \cite{CC09} Cerpa-Cr\'epeau gave a large-time local controllability result for \eqref{intro-sys-KdV} for $L$ belongs to $\mathcal{N}_3\cup \mathcal{N}_4$. In $\mathcal{N}_3$ the analysis is similar to \cite{Cerpa07}, and it suffices to benefit on the different rotation vitesse of eigenfunctions to reach each direction in $M$. For example, assume that $M$ is of dimension four:
\begin{equation*}
    M= span \{\varphi_1, \varphi_2, \phi_1, \phi_2\}
\end{equation*}
 Using the complex analysis argument, one can show that the state $y$ can reach a certain direction $\varphi_0= \alpha \varphi_1+ \beta \varphi_2+ \gamma \phi_1+ \delta \phi_2$.  Now, using the fact that the angle velocity of $\varphi_i$ is different from the velocity of $\phi_j$ and the simple combination of linear/nonlinear solutions, the solution can reach every state in $M$ provided that the period is large enough to allow the prepared rotation. 
The discussion for $\mathcal{N}_4$ is more involved. One has to combine the rotation process with the third order expansion. 
\vspace{3mm}

The breakthrough result on the small-time controllability is the one by Coron-Koenig-Nguyen in 2020, published in \cite{CKN}. They treated a part of the critical lengths belong to $\mathcal{N}_2, \mathcal{N}_3$ by adding another assumption: every pair $(k, l)$ must satisfy  $
2k+l\not\equiv 0 \; 
({\rm mod} 3)$. They proved that small-time controllability cannot be achieved for these critical lengths.

They introduced an approach to reduce the problem to the non-triviality of a constant $E$, which governs the leading order of non-vanishing term of $Q_M$;  see more details in Section \ref{sec: reduction approach}.
\[
\int_{0}^{\infty} \int_0^L |y (t, x)|^2 e^{-\ii pt }\varphi_x(x) dx dt \sim
\|u\|^2_{H^{-\frac{2}{3}}(\R)}\left(E + \bigO(T) \right).
\]
The condition $2k+l\notin3\N^*$ is used to ensure that $E\neq 0$. And in their paper, they made no further comments on this condition.

\subsubsection{The asymptotic stability problem}
Now, we consider the stability of the linearized KdV equation \eqref{eq: intro-linear-KdV} and the nonlinear KdV equation \eqref{intro-sys-KdV} with control $u= 0$. 

Following Hilbert Uniqueness Method and the observability inequality dues to Rosier \cite{Rosier97}, the linearized system is exponentially stable if and only if $L\not \in \mathcal{N}$. Thus, the nonlinear system is locally exponentially stable at non-critical lengths. For critical lengths, due to the existence of $M$, the nonlinear system is not exponentially stable.  What is the asymptotic stability of the nonlinear system at critical lengths? 

The answer also depends on the value of different critical lengths. In the literature people also followed the existing classification criterion \eqref{eq:old:classifi}, where the dimension of $M$ is also a useful index to distinguish different kinds of critical lengths. 
In \cite{Chu-Coron-Shang}, using the ideas of center manifolds,  the authors considered the case $L\in \mathcal{N}_1$ and proved a polynomial decay for KdV systems \eqref{intro-sys-KdV}. Then, similar results were proved for the case $L=2\pi\sqrt{7/3}$ in \cite{Tang-Chu-Shang-Coron}, and the authors indicated that they think the method is useful for  all $L\in \mathcal{N}_2$. A more general result \cite{Nguyen} reveals that the asymptotic stability holds for KdV systems \eqref{intro-sys-KdV} at critical lengths in $\mathcal{N}_2\cup \mathcal{N}_3$ that satisfies another specific condition, where the author used the quasi-periodic function.

\subsubsection{The exponential stabilization problem}
The analysis of critical sets based on the dimension of $M$ is also used in stabilization problems. When $L\not\in \mathcal{N}$, the system is exponentially stable thanks to the exponential stability of the linearized system. Then, it is further proved that with the help of a well-designed feedback law,  the system at non-critical lengths can become rapidly stable (namely, the decay rate can be as fast as we want)  \cite{Coron-Lv}. 
Later on, for critical lengths in $\mathcal{N}_2\cup \mathcal{N}_3$, using power series expansion method, the exponential stabilization has been achieved in  \cite{coron-rivas-xiang}. So far, the exponential stabilization in $\mathcal{N}_1\cup \mathcal{N}_4$ is still open.

\subsection{A novel classification of critical lengths}\label{sec: new classification}

Our new classification scheme is based on the congruence relation modulo $3$.
Our point of view is based on the spectral analysis of the stationary KdV operator, namely, what are the different behaviors of the eigenmodes for the stationary operator at different critical lengths. 
\begin{align}\label{def:ope:A}
    \mathcal{A}_L: D(\mathcal{A}_L)\subset H^3(0, L)&\rightarrow L^2(0, L) \\
    \varphi \mapsto -\varphi'''- \varphi'
\end{align}
with
\begin{equation*}
    D(\mathcal{A}_L) = \{\varphi\in H^3(0, L): \varphi(0)= \varphi(L)=0, \varphi'(0)= \varphi'(L)\}.
\end{equation*}
It looks strange that we consider an operator that is not compatible with the linear equation \eqref{eq: intro-linear-KdV}. But this operator shares the advantage of being skew-adjoint, and it turns out to be a good operator for the study of nonlinear KdV problems and for the classification of critical lengths. 
\subsubsection{Motivations}\label{sec: motivations}
 As illustrated in Section \ref{sec: history},  we have observed markedly different behaviors for both control and stability problems depending on whether $L\in\mathcal{N}$ or $L\notin \mathcal{N}$. Additionally, at a critical length $L\in\mathcal{N}$, the space $L^2(0,L)$ decomposes into $H\oplus M$. In the subspace $H$, the linearized KdV equations exhibit both null controllability and exponential stability. Conversely, in the subspace $M$, neither null controllability nor exponential stability is achieved due to the existence of Type I eigenfunctions of the form \eqref{eq: type-1-eigenfunction-intro}. \\ What about the case when $L$ is close to $\mathcal{N}$? From a limiting perspective, a good understanding of lengths near critical values can provide valuable insights into the analysis of critical lengths.
\begin{aquestion}\label{OP: H-M-decompose}
Given $T>0$ and $L_0\in \mathcal{N}$, when $L\notin\mathcal{N}$ approaches $L_0$, can we define a similar decomposition $H(L)\oplus M(L)$ such that different behaviors are observed in $H(L)$ and $M(L)$? What is the asymptotic description of this behavior?  Furthermore, how does this behavior extend to the nonlinear case?
\end{aquestion}
Briefly, we would like to better understand how the unreachable subspace $M$ is formulated. We view this formation of $M$ as a limiting process where $M(L)\to M(L_0)$ as $L\to L_0\in\mathcal{N}$. Moreover, based on that, we are able to give a quantitative description of the stability and control problems related to KdV equations.

Recall that Rosier \cite{Rosier97} proved that for every $L_0\in \mathcal{N}$ there exists Type 1 eigenfunction of the operator $\mathcal{A}_{L_0}$, namely the eigenfunction verifying $\varphi'(0)= \varphi'(L_0)= 0$. We further observe that only for critical lengths $L_0$ such that the pair $(k, l)$ satisfies $2k+ l\in 3\N^*$, the operator $\mathcal{A}_{L_0}$ admits Type 2 eigenfunctions, that is the modes satisfying  $\varphi'(0)= \varphi'(L_0)\neq 0$.

 Heuristically speaking,  the analysis for the operator $\mathcal{A}_L$ with $L$ sufficiently close to $L_0$ can be understand in a perturbative point of view. Due to the operator perturbation theory,  the asymptotic analysis not only depends on Type 1 eigenfunctions but also relies on Type 2 eigenfunctions.  We refer to Section \ref{sec: further perspectives} for more discussion on this limiting problem. 
 In general, depending on the value of $(k, l)$ there are different behaviors of eigenvalues and eigenfunctions: 
 \begin{itemize}
     \item {\it Case $k= l$.}  The corresponding eigenvalue is zero, with multiplicity two. It has two eigenfunctions, one of Type 1 and the other of Type 2.
      \item {\it Case  $2k+ l\notin 3\N^*$. } The corresponding eigenvalue is nonzero, with multiplicity one. It has only one eigenfunction, one of Type 1.
      \item {\it Case  $2k+ l\in 3\N^*$ and $k\neq l$. }  The corresponding eigenvalue is nonzero, with multiplicity two. It has two eigenfunctions, one of Type 1 and the other of Type 2.
 \end{itemize}
In conclusion, when considering problems at critical lengths,  the effective factor is not the dimension of $M$ but the unreachable pair $(k,l)$, and one should consider $(k, l)$ according to the above-mentioned classes.

 After resisting the papers \cite{CC04,Cerpa07,CC09,CKN}, combining with our classification on pairs $(k,l)$, we found that the remaining case for the Open Problem is $2k+l\in3\N^*$, and this case is degenerate from analytic viewpoint. We refer to more details in Section \ref{sec: degenerate case}. 

\subsubsection{Classification criteria}
Now, we are in a position to present our classification of the critical sets. Define following index for $L\in\mathcal{N}$
\begin{equation}
\mathcal{I}(L):=3\left(\frac{L}{2\pi}\right)^2\in\N^*,\forall L\in\mathcal{N}.
\end{equation}
Consider the following simple Diophantine Equation 
\begin{equation}\label{integersolution}
    a^2+ab+b^2=n,
\end{equation}
or equivalently
\begin{equation}\label{eq:klIC}
    k^2+ kl+ l^2= \mathcal{I}(L), \textrm{ for } L\in \mathcal{N}.
\end{equation}
The solutions of the preceding algebraic equations lead to a description of dim $M$.
For each $L\in\mathcal{N}$, the equation \eqref{eq:klIC} may have multiple integer solutions. 
Inspired by the analysis of Type 1 and Type 2 eigenfunctions, we have the following natural classification for those solutions $(k,l)$ and we believe that pairs within the same class exhibit similar analytical and control properties. 
\begin{definition}\label{defi: types of unreachable pairs}
Let $L\in \mathcal{N}$. We define the following sets for the unreachable pairs $(k,l)$:
\begin{align*}
    \mathcal{S}_1(L)&:=\{(k,l) \textrm{ solution of } \eqref{eq:klIC}: k=l\}, \\
    \mathcal{S}_2(L)&:=\{(k,l) \textrm{ solution of } \eqref{eq:klIC}: k\equiv l\mod{3}, \; k\neq l\}\\
    \mathcal{S}_3(L)&:=\{(k,l) \textrm{ solution of } \eqref{eq:klIC}: k\not\equiv l\mod{3}\}.
\end{align*}
\end{definition}

One easily observe that 
\begin{equation*}
     \mathcal{S}_3(L)\cap  \mathcal{S}_1(L)=  \mathcal{S}_3(L)\cap  \mathcal{S}_2(L)= \emptyset.
\end{equation*}
\begin{remark}\label{rem:S123}
    Thus, for a given critical length, it either contains only pairs $(k, l)$ from $\mathcal{S}_3$  or pairs from $\mathcal{S}_1\cup \mathcal{S}_2$. In the commonly used classification criterion, no distinction is made between pairs from $\mathcal{S}_2$ and $\mathcal{S}_3$. This approach, based on the aforementioned rotation mechanism, is effective for large-time local controllability problems.  However, in practice, pairs from $\mathcal{S}_2$  and $\mathcal{S}_3$ exhibit fundamentally different spectral behaviors. These differences can heavily affect more delicate problems, such as small-time local controllability, asymptotic stability, and others.
\end{remark}

Based on the features of the integer pairs $(k,l)$ solving \eqref{eq:klIC}, we classify the critical lengths under three cases:
\begin{definition}\label{def:new:classification}
We have the following types of critical lengths,
\begin{align*}
    \mathcal{N}^1&:=\{L\in \mathcal{N}: \textrm{ there exists only one pair $(k, l)$ solving \eqref{eq:klIC}, which belongs to } \mathcal{S}_1(L)\}, \\
    \mathcal{N}^2&:= \{L\in \mathcal{N}: \textrm{ all solutions $(k, l)$ of \eqref{eq:klIC}  belong to } \mathcal{S}_3(L)\}\\
       \mathcal{N}^3&:=
    \{L\in \mathcal{N}: \textrm{ there exists pair $(k, l)$ solving \eqref{eq:klIC}, which belongs to } \mathcal{S}_2(L)\}.
\end{align*}
\end{definition}
Clearly, these sets are disjoint and 
\begin{equation}
    \mathcal{N}=\mathcal{N}^1\cup \mathcal{N}^2\cup\mathcal{N}^3. \footnote{In abuse of the notations, we use $\mathcal{N}_j$ for the classes under the previous category, while $\mathcal{N}^i$ for our new classification. }
\end{equation}
The situation for critical lengths from $\mathcal{N}^3$ is  most complicated, since it may contain both pairs from $\mathcal{S}_1$ and $\mathcal{S}_2$. While the other two classes only contain pairs from $\mathcal{S}_1$ or $\mathcal{S}_2$.
\begin{example}
For each case above, we could choose a model to catch a glimpse of its features. We choose $(k, l)= (1, 1)$ thus $L=2\pi$ as a model for $\mathcal{N}^1$,  $(k, l)= (2, 1)$ thus $L=2\pi\sqrt{7/3}$ for $\mathcal{N}^2$, and $(k, l)= (4, 1)$ thus $L=2\pi\sqrt{7}$ for $\mathcal{N}^3$.
\end{example}

\subsubsection{Type 1 and Type 2 eigenfunctions in different cases.}
Let $L\in\mathcal{N}$. As noticed in \cite{Rosier97}, there exists a finite number of pairs $\{(k_m,l_m)\}^{N}_{m=1}\subset \N^*\times \N^*$, with $k_m\geq l_m$ such that $L=2\pi\sqrt{\frac{k_m^2+k_m l_m+l_m^2}{3}}$.
We provide a brief description of the eigenmodes at the critical length in the following proposition and one can find its proof in Appendix \ref{sec: proof of prop-eigenmodes}. 
\begin{proposition}\label{prop: type-1-2}
For a fixed critical length $L\in \mathcal{N}$,  let $(k_m,l_m)$ be an unreachable pair such that 
\begin{equation*}
    L=2\pi\sqrt{\frac{k_m^2+k_m l_m+l_m^2}{3}}.
\end{equation*}
 Consider the eigenvalues of the operator $\mathcal{A}$ given by \eqref{def:ope:A}: 
\begin{equation}\label{eq: eigenvalue problem with critical length}
\left\{
\begin{array}{ll}
     \G'''+\G'+ i\lambda_m\G=0,  & \text{ in }(0,L),\\
     \G(0)=\G(L)=\G'(0)-\G'(L)=0,& 
\end{array}
\right.       
\end{equation}
with 
\begin{equation}\label{eq: defi of lambda_c}
\lambda_{m}:=\lambda_{m}(k_m,l_m)=\frac{(2k_m+l_m)(k_m-l_m)(2l_m+k_m)}{3\sqrt{3}(k_m^2+k_m l_m+l_m^2)^{\frac{3}{2}}}. 
\end{equation}
Then,
\begin{enumerate}
    \item If $(k_m,l_m)\in \mathcal{S}_3(L)$, there exists a unique solution $\G_m$ in the form:
    \begin{equation}\label{eq: exact formula for critical eigenfunctions}
    \G_m(x):=-e^{ i x\frac{\sqrt{3} (2 k_m + l_m) }{3\sqrt{k_m^2 + k_m l_m + l_m^2}}}  -\frac{k_m}{l_m} e^{- i x\frac{\sqrt{3} (k_m + 2 l_m) }{3 \sqrt{k_m^2 + k_m l_m + l_m^2}}}  + \frac{k_m+l_m}{l_m}e^{ i x\frac{\sqrt{3} (-k_m + l_m)}{ 3\sqrt{k_m^2 + k_m l_m + l_m^2}}}.    
    \end{equation}
    In particular, $\G'_m(0)=\G'_m(L)=0$.
    \item If $(k_m,l_m)\in\mathcal{S}_1(L)\cup\mathcal{S}_2(L)$, or equivalently $2k_m+l_m\in3\N^*$, there are two solutions to \eqref{eq: eigenvalue problem with critical length}. One is $\G_m$ defined in \eqref{eq: exact formula for critical eigenfunctions}, while the other is in the following form:
    \begin{equation}\label{eq: defi of tilde-G}
\Tilde{\G}_m(x):= e^{ i x\frac{\sqrt{3} (2 k_m + l_m) }{3\sqrt{k_m^2 + k_m l_m + l_m^2}}}  - e^{- i x\frac{\sqrt{3} (k_m + 2 l_m) }{3 \sqrt{k_m^2 + k_m l_m + l_m^2}}}.   
\end{equation}
Additionally, $\G_m$ and $\Tilde{\G}_m$ are linearly independent, while 
\[
\Tilde{\G}_m'(0)=\Tilde{\G}_m'(L)=\frac{ i\sqrt{3}(k_m+ l_m)}{\sqrt{k_m^2 + k_m l_m + l_m^2}}\neq0.
\]
All solutions to \eqref{eq: eigenvalue problem with critical length} are the linear combination of $\G_m$ and $\Tilde{\G}_m$.
\end{enumerate}
\end{proposition}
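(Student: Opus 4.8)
\textbf{Proof proposal for Proposition \ref{prop: type-1-2}.}

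The plan is to solve the third-order constant-coefficient ODE $\G'''+\G'+\ii\lambda\G=0$ explicitly by its characteristic equation, impose the three boundary conditions, and read off exactly when a second (Type 2) solution appears. First I would write the characteristic polynomial $P(r)=r^3+r+\ii\lambda$ and parametrize its three roots. Motivated by the ansatz that for critical $L$ the roots are purely imaginary, I would set $r_j=\ii\mu_j$ with $\mu_j\in\R$; then $P(\ii\mu)=0$ becomes $-\ii\mu^3+\ii\mu+\ii\lambda=0$, i.e. $\mu^3-\mu-\lambda=0$, a real cubic. Writing the three real roots as $\mu_1,\mu_2,\mu_3$, Vieta gives $\mu_1+\mu_2+\mu_3=0$, $\mu_1\mu_2+\mu_1\mu_3+\mu_2\mu_3=-1$, $\mu_1\mu_2\mu_3=\lambda$. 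The standard substitution $\mu_j = \tfrac{2}{\sqrt3}\sin\theta_j$ (with $\theta_j$ suitably shifted) linearizes the first two relations; matching with the known critical-length parametrization $L=2\pi\sqrt{(k^2+kl+l^2)/3}$ I would identify, after a short computation, $\mu_1 = \tfrac{\sqrt3(2k+l)}{3\sqrt{k^2+kl+l^2}}$, $\mu_2=-\tfrac{\sqrt3(k+2l)}{3\sqrt{k^2+kl+l^2}}$, $\mu_3=\tfrac{\sqrt3(l-k)}{3\sqrt{k^2+kl+l^2}}$, which indeed sum to zero, and then $\lambda=\mu_1\mu_2\mu_3$ reproduces \eqref{eq: defi of lambda_c}. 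The exponents in \eqref{eq: exact formula for critical eigenfunctions} and \eqref{eq: defi of tilde-G} are exactly $\ii\mu_j x$, so this is the source of those formulas.

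Next I would impose the boundary conditions on $\G(x)=\sum_{j=1}^3 c_j e^{\ii\mu_j x}$. The conditions $\G(0)=0$ and $\G'(0)=\G'(L)$ and $\G(L)=0$ give a linear system in $(c_1,c_2,c_3)$. The condition $\G(0)=0$ reads $c_1+c_2+c_3=0$. Using the critical relation $L=2\pi\sqrt{(k^2+kl+l^2)/3}$ one computes $\mu_1 L = 2\pi(2k+l)/3$, $\mu_2 L=-2\pi(k+2l)/3$, $\mu_3L=2\pi(l-k)/3$; the key arithmetic fact is that $\mu_j L\in 2\pi\Z$ \emph{for all $j$} precisely when $3\mid(2k+l)$ (equivalently $k\equiv l \bmod 3$, i.e. $(k,l)\in\mathcal{S}_1\cup\mathcal{S}_2$), whereas otherwise $e^{\ii\mu_j L}$ are nontrivial cube roots of unity. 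In the case $(k,l)\in\mathcal{S}_3$: the three conditions $\sum c_j=0$, $\sum c_j e^{\ii\mu_j L}=0$, $\sum c_j\mu_j(e^{\ii\mu_j L}-1)=0$ with distinct nontrivial factors force the solution space to be one-dimensional; solving the $2\times 2$ system after eliminating (say) $c_3$ and normalizing gives \eqref{eq: exact formula for critical eigenfunctions} up to scalar, and a direct check yields $\G_m'(0)=\sum \ii\mu_j c_j = 0$ (this is automatic once $\G(L)=0$ is imposed together with $\G'(0)=\G'(L)$, because then $\G'(0)=\G'(L)$ forces $\G'(0)(1-1)$... more precisely the Wronskian-type identity forces it — I would verify this small computation directly). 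In the case $3\mid(2k+l)$: all $e^{\ii\mu_jL}=1$, so $\G(L)=\sum c_j=0$ is the \emph{same} condition as $\G(0)=0$, and $\G'(0)=\G'(L)$ is automatically satisfied; hence the solution space of the boundary-value problem is the full $2$-dimensional space $\{c: \sum c_j=0\}$. Choosing the basis representative with $c$ proportional to $(1,-1,0)$ gives $\tilde\G_m$ in \eqref{eq: defi of tilde-G}, and the complementary direction (the one with $\G'(0)=0$, forced by an extra normalization) gives $\G_m$; linear independence of $\G_m,\tilde\G_m$ is clear since their exponent-supports differ. Finally $\tilde\G_m'(0)=\ii(\mu_1-\mu_2)=\ii\cdot\tfrac{\sqrt3(k+2l)+\sqrt3(2k+l)}{3\sqrt{k^2+kl+l^2}}\cdot(\ldots)$ — recomputing, $\mu_1-\mu_2 = \tfrac{\sqrt3(2k+l)+\sqrt3(k+2l)}{3\sqrt{k^2+kl+l^2}}=\tfrac{\sqrt3(k+l)}{\sqrt{k^2+kl+l^2}}$, matching the stated value, and it is nonzero since $k,l\in\N^*$.

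The statement that \eqref{integersolution}-style critical lengths are the \emph{only} ones admitting purely imaginary characteristic roots with the right rational resonance is Rosier's classical analysis \cite{Rosier97}, which I would invoke rather than reprove: it guarantees that every eigenfunction of $\mathcal{A}_L$ at a critical length arises this way, so the case analysis above is exhaustive and "all solutions are linear combinations of $\G_m$ and $\tilde\G_m$" follows because a third-order homogeneous ODE has a $3$-dimensional solution space and we have pinned down a $1$- or $2$-dimensional subspace cut out by the boundary conditions (it cannot be $3$-dimensional since $\G(0)=0$ alone is one nontrivial constraint).

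I expect the main obstacle to be \emph{bookkeeping rather than conceptual depth}: carefully verifying that the explicit $c_j$ obtained by solving the $2\times2$ linear system collapse to the clean closed forms \eqref{eq: exact formula for critical eigenfunctions} and \eqref{eq: defi of tilde-G} (the coefficients $-1,-k/l,(k+l)/l$ and $1,-1$ must come out exactly), and confirming the automatic relations $\G_m'(0)=\G_m'(L)=0$ in the $\mathcal{S}_3$ case — which requires using the specific arithmetic of the exponents $\mu_jL\in\frac{2\pi}{3}\Z$ and not merely their being roots of the characteristic polynomial. A secondary subtlety is making sure the congruence dichotomy "$3\mid(2k+l)$ vs not" is stated in a way consistent with $k\equiv l\bmod 3$ (note $2k+l\equiv 2k+l - 3l = 2(k-l)\bmod 3$, so $3\mid(2k+l)\iff 3\mid(k-l)\iff k\equiv l\bmod3$), which cleanly matches Definition \ref{defi: types of unreachable pairs}. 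None of this is hard, but it must be done precisely for the closed-form eigenfunctions to emerge.
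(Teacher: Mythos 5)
Your plan follows the paper's Appendix C argument: parametrize the eigenfunction by purely imaginary exponents $\ii\mu_j x$, use $\mu_j L\in\tfrac{2\pi}{3}\mathbb{Z}$, impose the boundary conditions, and split on $3\mid(2k+l)$. The closed-form $\mu_j$, the computation of $\Tilde{\G}_m'(0)$, and the reduction $3\mid(2k+l)\iff k\equiv l\pmod 3$ are all correct.

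There is, however, a genuine misstatement at the pivotal point of your $\mathcal{S}_3$ case. You describe the factors $e^{\ii\mu_j L}$ as "distinct nontrivial" and claim that three conditions with distinct factors cut out a one-dimensional space. In fact the three factors are not distinct at all: since $2k+l$, $-(k+2l)$, and $l-k$ are all congruent mod $3$, one has $e^{\ii\mu_1 L}=e^{\ii\mu_2 L}=e^{\ii\mu_3 L}=\zeta$ with $\zeta:=e^{2\pi\ii(2k+l)/3}$. This equality is precisely what makes the boundary-value problem solvable: $\G(L)=\zeta\G(0)$, so the Dirichlet condition at $x=L$ is \emph{always} redundant once $\G(0)=0$ is imposed (not only in $\mathcal{S}_1\cup\mathcal{S}_2$ as your discussion suggests), and $\G'(L)=\zeta\G'(0)$, so $\G'(0)=\G'(L)$ reads $(1-\zeta)\G'(0)=0$. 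When $(k,l)\in\mathcal{S}_3$ we have $\zeta\neq 1$, forcing $\G'(0)=0$; the two independent constraints $\sum c_j=0$ and $\sum\mu_j c_j=0$ then leave a one-dimensional kernel, on which $\G_m'(0)=\G_m'(L)=0$ holds automatically — that is the content of the parenthetical you left unfinished at "$\G'(0)(1-1)$...". Had the $e^{\ii\mu_j L}$ actually been three distinct values, $\G(0)=0$ and $\G(L)=0$ would already be independent (Vandermonde), and the Neumann condition would generically kill the remaining dimension; so the heuristic as written points toward a trivial kernel, not a one-dimensional one. Once you replace "distinct" by "all equal to $\zeta$," the rest of the plan — eliminating $c_3$, recovering the coefficients $-1,\,-k/l,\,(k+l)/l$ for $\G_m$ and $1,-1,0$ for $\Tilde{\G}_m$, and checking linear independence — is exactly the paper's computation and goes through.
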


We call $\G_m$ defined in \eqref{eq: exact formula for critical eigenfunctions} the \textit{Type 1} eigenfunctions.  As is mentioned in \cite[Remark 3.6]{Rosier97}, for $L\in\mathcal{N}$, the eigenfunction $\G_m$ can generate an unreachable state for linearized KdV equations. We denote by $N_0$ the number of Type 1 eigenfunctions, which is also the dimension of the unreachable subspace.

Surprisingly, we notice the existence of another type of eigenfunctions $\Tilde{\G}_m$ if $2k_m+l_m\in3\N^*$ with nonvanishing Neumann boundary conditions. These solutions to \eqref{eq: eigenvalue problem with critical length} with nonzero Neumann boundary conditions are referred to as \textit{Type 2 eigenfunctions}. In summary,
\begin{definition}\label{def:type12}
Let $L\in \mathcal{N}$ and $\A$ be given by \eqref{def:ope:A}. Let $(k,l)$ be an unreachable pair solving \eqref{eq:klIC} and $\lambda=\lambda(k,l)$ be given by \eqref{eq: defi of lambda_c}. 
\begin{itemize}
    \item We say $\G$ is a \textbf{Type 1} eigenfunction if 
    \[
    \left\{
\begin{array}{l}
     \G'''+\G'+ i\lambda\G=0,\\
     \G(0)=\G(L)=\G'(0)=\G'(L)=0.
\end{array}
\right.
    \]
    \item We say $\Tilde{\G}$ is a \textbf{Type 2} eigenfunction if 
    \[
    \left\{
\begin{array}{l}
     \Tilde{\G}'''+\Tilde{\G}'+ i\lambda\Tilde{\G}=0,\\
     \Tilde{\G}(0)=\Tilde{\G}(L)=0,\;
     \Tilde{\G}'(0)=\Tilde{\G}'(L)\neq0.
\end{array}
\right.
    \]
\end{itemize}
\end{definition}

\begin{remark}
We emphasize that Type 1 eigenfunctions appear for all critical lengths. However, Type 2 eigenfunctions only appear for $L\in \mathcal{N}^1\cup\mathcal{N}^3$.
Equivalently, Type 2 eigenmodes exist if there exists a pair $(k,l)$ such that $2k+l\in3\N^*$.
\end{remark}
\begin{corollary}
Let $(k,l)$ satisfy that $2k+l\in3\N^*$ and $L=2\pi\sqrt{(k^2+kl+l^2)/3}$ and $\G$ be a Type 1 eigenfunction. Then, its derivative $\G'$ is automatically of Type 2.  
\end{corollary}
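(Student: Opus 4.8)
## Proof Proposal for the Corollary

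The statement to prove is the following: if $(k,l)$ satisfies $2k+l\in 3\N^*$ and $L=2\pi\sqrt{(k^2+kl+l^2)/3}$, and $\G$ is a Type 1 eigenfunction, then $\G'$ is automatically a Type 2 eigenfunction.

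My plan is to verify directly the three defining properties of a Type 2 eigenfunction listed in Definition~\ref{def:type12}, using the fact that $\G$ satisfies the Type 1 equations and that we are in the case $2k+l\in 3\N^*$. First I would check the differential equation: since $\G$ satisfies $\G'''+\G'+\ii\lambda\G=0$ on $(0,L)$, differentiating this identity once (which is legitimate because in this case $\G$ is an explicit exponential sum, hence $C^\infty$, by Proposition~\ref{prop: type-1-2}(2)) yields $(\G')'''+(\G')'+\ii\lambda(\G')=0$, so $\psi:=\G'$ solves the same third-order ODE with the same eigenvalue $\ii\lambda$.

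Next I would verify the boundary conditions. From the Type 1 conditions $\G'(0)=\G'(L)=0$, we get $\psi(0)=\psi(L)=0$, which is the Dirichlet part of the Type 2 conditions. It remains to show $\psi'(0)=\psi'(L)\neq 0$, i.e., $\G''(0)=\G''(L)\neq 0$. For the equality: evaluate the ODE $\G'''+\G'+\ii\lambda\G=0$ at $x=0$ and $x=L$. At both endpoints $\G=0$ and $\G'=0$ (Type 1), so $\G'''(0)=\G'''(L)=0$. This alone does not give $\G''(0)=\G''(L)$ directly, so instead I would argue via the explicit formula: using \eqref{eq: exact formula for critical eigenfunctions} one computes $\G'$ and observes it equals a nonzero constant multiple of $\tilde\G_m$ from \eqref{eq: defi of tilde-G} (this is essentially forced, since $\G'$ lies in the two-dimensional eigenspace, is killed by the map "evaluate at $0,L$" only in its value not its derivative, and the Type 1 space within that eigenspace is one-dimensional). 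Then $\psi'(0)=\psi'(L)$ follows from the already-established identity $\tilde\G_m'(0)=\tilde\G_m'(L)=\ii\sqrt{3}(k_m+l_m)/\sqrt{k_m^2+k_ml_m+l_m^2}\neq 0$, which simultaneously gives the non-vanishing.

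The cleanest route, and the one I would actually write, is: $\G'$ lies in the eigenspace for $\ii\lambda$ (differentiate the ODE), which by Proposition~\ref{prop: type-1-2}(2) is spanned by $\G_m$ and $\tilde\G_m$; write $\G' = a\G_m + b\tilde\G_m$. Since $\G'(0)=0=a\G_m(0)+b\tilde\G_m(0)$ and similarly at $L$, and since $\G_m(0)=\G_m(L)=\tilde\G_m(0)=\tilde\G_m(L)=0$ anyway, this gives no constraint — so instead use the derivative: $\G''=a\G_m'+b\tilde\G_m'$, and evaluating at $0$ gives $\G''(0)=b\,\tilde\G_m'(0)$ (since $\G_m'(0)=0$), similarly $\G''(L)=b\,\tilde\G_m'(L)=b\,\tilde\G_m'(0)$; hence $\psi'(0)=\psi'(L)$ automatically, and this common value is nonzero provided $b\neq 0$. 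Finally $b\neq 0$ because otherwise $\G'=a\G_m$, forcing $\G'=a\G_m$ with $\G_m$ not identically the derivative of a Type 1 function unless $a=0$; more concretely, $\G'\equiv a\G_m$ together with $\G(0)=0$ would make $\G$ an antiderivative of $a\G_m$, but direct integration of \eqref{eq: exact formula for critical eigenfunctions} shows the primitive of $\G_m$ vanishing at $0$ does not satisfy $\G(L)=0$ and $\G'(L)=0$ simultaneously unless the $\tilde\G_m$ component is present — so $b\neq 0$. The main (and only real) obstacle is this last non-degeneracy point, i.e., confirming $b\neq 0$; everything else is bookkeeping with the explicit exponential formulas, and in fact the slickest argument simply invokes that $\G'$ cannot be a Type 1 eigenfunction since $\G'$ is linearly independent from $\G$ and the Type 1 eigenspace is one-dimensional (spanned by $\G_m$), while $\G'$ is not a scalar multiple of $\G=\G_m$ as these are distinct exponential sums.
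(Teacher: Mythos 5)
Your proposal is correct, but it takes a genuinely different route from the paper. The paper's proof is a one-line direct computation: from the explicit formula \eqref{eq: exact formula for critical eigenfunctions} one reads off
$\G''(0)=\frac{3k(k+l)}{k^2+kl+l^2}$ and $\G''(L)=e^{-\ii\frac{2(2k+l)\pi}{3}}\,\G''(0)$,
and then $2k+l\in 3\N^*$ makes the phase equal $1$, so $\G''(0)=\G''(L)\neq 0$. You instead argue structurally: $\G'$ belongs to the two-dimensional space $\mathrm{span}\{\G_m,\tilde\G_m\}$, and writing $\G'=a\G_m+b\tilde\G_m$ forces $\G''(0)=b\tilde\G_m'(0)=b\tilde\G_m'(L)=\G''(L)$ for free, since $\G_m'$ vanishes at both endpoints and $\tilde\G_m'$ matches at both endpoints; then $b\neq 0$ because $\G'=a\G_m=a\G$ together with $\G(0)=0$ would force $\G\equiv 0$. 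This structural route has the virtue of \emph{explaining} the matching of $\G''$ at the two endpoints rather than observing it numerically, and it avoids touching the explicit exponential sum; the paper's computation is shorter and self-contained.

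One detail you should patch. You invoke Proposition~\ref{prop: type-1-2}(2) to place $\G'$ in $\mathrm{span}\{\G_m,\tilde\G_m\}$, but that proposition characterizes the solution space of the full BVP \eqref{eq: eigenvalue problem with critical length}, which includes the condition $f'(0)=f'(L)$ --- the very thing you are trying to prove for $f=\G'$, so as written the step is circular. The fix is easy and worth spelling out: when $2k+l\in 3\N^*$ every root $\eta_j$ of $\eta^3+\eta+\ii\lambda=0$ satisfies $e^{\eta_j L}=1$, so for any ODE solution $f=\sum_j c_j e^{\eta_j x}$ the conditions $f(0)=0$ and $f(L)=0$ both reduce to $\sum_j c_j=0$, and the condition $f'(0)=f'(L)$ is then automatic; hence the set $\{$ODE$\;+\;$Dirichlet$\}$ is already two-dimensional and coincides with $\mathrm{span}\{\G_m,\tilde\G_m\}$. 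With that observation in place, $\G'$ lands in the span using only the Dirichlet information (which you do have, since $\G$ is Type 1), and the rest of your argument goes through. Alternatively, you could bypass the span altogether: write $\G'=\sum c_j e^{\eta_j x}$ directly and note $\G''(0)=\sum c_j\eta_j=\sum c_j\eta_j e^{\eta_j L}=\G''(L)$ from $e^{\eta_j L}=1$, which is perhaps closer in spirit to the paper's computation but still reveals the structural reason for the matching.
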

\begin{proof}
It is easy to check that $f=\G'$ satisfies 
\[
\left\{
\begin{array}{ll}
     f'''+f'+ i\lambda(k,l)f=0,  & \text{ in }(0,L),\\
     f(0)=f(L)=0.& 
\end{array}
\right.      
\]
We only focus on its Neumann boundary conditions. Direct computations derive
\[
\G''(0)=\frac{3k(k+l)}{k^2+kl+l^2}, \;
\G''(L)=\frac{3e^{-\ii\frac{2(2k+l)\pi}{3}}k(k+l)}{k^2+kl+l^2}.
\]
Since $2k+l\in3\N^*$, then $e^{-\ii\frac{2(2k+l)\pi}{3}}=1$, which implies that $\G''(0)=\G''(L)\neq 0$. Then $\G'$ is of Type 2.
\end{proof}

\subsection{Characteristics of dimensions of unreachable subspaces}\label{sec: dimension characteristics}
In this sequel, we prove the following proposition.
\begin{proposition}
For any $d\in\N^*$, there are infinitely many $L_0\in \mathcal{N}$ such that dim $M= d$.
\end{proposition}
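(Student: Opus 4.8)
The plan is to reduce the statement to a counting/number-theoretic problem about the Diophantine equation $k^2+kl+l^2=n$. Recall from the excerpt (Definition~\ref{def:type12} and the surrounding discussion) that $\dim M = N_0$ equals the number of Type~1 eigenfunctions, which in turn equals the number of ordered (or rather, unordered with $k\geq l$) pairs $(k,l)\in\N^*\times\N^*$ solving $k^2+kl+l^2=\mathcal{I}(L)$, since every such pair contributes exactly one Type~1 eigenfunction $\G_m$ of the form \eqref{eq: exact formula for critical eigenfunctions}, regardless of whether $(k,l)$ lies in $\mathcal{S}_1$, $\mathcal{S}_2$ or $\mathcal{S}_3$. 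So it suffices to show: for every $d\in\N^*$ there are infinitely many $n\in\N^*$ for which the number of representations $n=k^2+kl+l^2$ with $k\geq l\geq 1$ is exactly $d$.

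The key tool is the multiplicative structure of the norm form of the Eisenstein integers $\Z[\omega]$, $\omega=e^{2\pi i/3}$: the number $r(n)$ of representations of $n$ by $k^2+kl+l^2$ (counted appropriately, over all integers) is governed by the factorization of $n$ into primes $p\equiv 1\pmod 3$ (which split), $p\equiv 2\pmod 3$ (which stay inert), and $p=3$ (which ramifies). Concretely, if $n=3^a\prod p_i^{b_i}\prod q_j^{c_j}$ with $p_i\equiv1$, $q_j\equiv 2\pmod 3$, then $n$ is representable iff all $c_j$ are even, and the number of essentially distinct representations grows like $\prod(b_i+1)$. The plan is: first I would fix a prime $p\equiv 1\pmod 3$ and consider $n=p^{d-1}$ (or a suitable product of distinct such primes arranged so that $\prod(b_i+1)=d$), check that $\sqrt{n/3}\cdot 2\pi$ need not be used directly --- rather one needs $3\mid n$ is \emph{not} required since $\mathcal{I}(L)$ ranges over all of $\N^*$ as $L$ ranges over $\mathcal{N}$; then translate the count of integer solutions over $\Z^2$ into the count of solutions with $k\geq l\geq 1$, which introduces bounded correction factors (units of $\Z[\omega]$ give a factor $6$, the symmetry $k\leftrightarrow l$ a factor $2$, and one must discard the degenerate solutions with $l=0$), and finally verify positivity, i.e.\ that the pairs really have both entries $\geq 1$.

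For the infinitude, once one $n_0$ is found with exactly $d$ representations in the desired range, I would produce infinitely many by multiplying $n_0$ by squares of inert primes $q\equiv 2\pmod 3$ distinct from all prime factors of $n_0$: multiplying by $q^2$ multiplies $r(n)$ by the local factor at $q$, which is $(2+1)/\text{(nothing new)}$... here one must be careful --- inert primes raised to even powers contribute a factor that does \emph{not} change the number of primitive representation classes but does rescale; the cleanest route is instead to take $n = n_0 \cdot q^{2m}$ and observe the representation count is unchanged because the inert prime contributes trivially to the class number formula (every representation of $n$ is $q^m$ times a representation of $n_0$). This gives infinitely many valid $n$, hence infinitely many $L_0=2\pi\sqrt{n/3}\in\mathcal{N}$ with $\dim M=d$.

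The main obstacle I anticipate is the bookkeeping in passing from the clean algebraic count of representations of $n$ by the quadratic form over $\Z^2$ (or equivalently the ideal-theoretic count in $\Z[\omega]$) to the precise count of pairs $(k,l)$ with $k\geq l\geq 1$ that actually index distinct Type~1 eigenfunctions: one must correctly handle the $6$ units, the reflection symmetry, the boundary cases $k=l$ and $l=0$, and make sure none of these collapse the count in an uncontrolled way for small $d$ (the cases $d=1$, e.g.\ $n=3$ giving $k=l=1$, and $d=2$, e.g.\ $n=7$ giving $(k,l)=(2,1)$ only, should be checked by hand as sanity checks, matching the examples $\mathcal{N}^1$ and $\mathcal{N}^2$ in the excerpt). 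A secondary subtlety is ensuring that for the chosen $n$ there is no accidental extra representation coming from the prime $3$ or from an interaction of several split primes; choosing $n$ to be a prime power $p^{d-1}$ with $p\equiv1\pmod3$ and $p$ large sidesteps this, at the cost of a short lemma computing $r(p^{d-1})$ exactly.
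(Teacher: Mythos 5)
Your proposal takes the same route as the paper: reduce to the arithmetic of the quadratic form $k^2+kl+l^2$ and use the multiplicative structure of $\Z[\omega]$, exactly as in Proposition~\ref{lem: counting}. However, there are two concrete gaps.

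First, the reduction misidentifies $\dim M$. You claim $\dim M$ equals the number of pairs $(k,l)$ with $k\geq l\geq 1$ solving $k^2+kl+l^2=\mathcal{I}(L)$. But each pair with $k>l$ contributes \emph{two} real dimensions to $M$, namely $\Re\G_\lambda$ and $\Im\G_\lambda$ (the eigenvalue $\lambda$ in \eqref{eq: defi of lambda_c} is nonzero, so $\G_\lambda$ is genuinely complex), while only a $k=l$ pair contributes one. For instance $L=2\pi\sqrt{7/3}$ has a single pair $(2,1)$ with $k\geq l$, yet $\dim M = 2$; your reduction would predict $\dim M=1$. The correct identification, consistent with the classification $\mathcal{N}_1$--$\mathcal{N}_4$ given earlier in the paper, is $\dim M = N(\mathcal{I}(L))$ where $N(n)$ in Proposition~\ref{lem: counting} counts \emph{all} ordered solutions $(a,b)\in\N^*\times\N^*$ (so $(2,1)$ and $(1,2)$ are both counted). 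Equivalently $\dim M = 2P-\varepsilon$ where $P$ is the number of $k\geq l$ pairs and $\varepsilon=1$ iff a $k=l$ solution exists. So you should be targeting $N(n)=d$, not $P=d$.

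Second, even with the corrected target, the recipe $n=p^{d-1}$ for $p\equiv 1\pmod 3$ fails for odd $d$: in that case $p^{d-1}$ is a perfect square, the representation $(\pm p^{(d-1)/2},0)$ and its unit orbit are discarded, and Proposition~\ref{lem: counting} gives $N(n)=d-1$, not $d$. You assert that choosing $n$ a prime power ``sidesteps this,'' but it does not. The simple fix is to take $n=3\,p^{d-1}$ (or $3^{2a+1}p^{d-1}$): the odd power of $3$ guarantees $n$ is never a perfect square, so $N(n) = d$ for every $d\in\N^*$. Your closing step --- multiply by $q^{2m}$ for fresh inert primes $q\equiv 2\pmod 3$, noting that every representation of $n_0q^{2m}$ is $q^m$ times a representation of $n_0$ --- is correct and produces the infinitude once the choice of $n_0$ is repaired. (Varying $p$ over primes $\equiv 1\pmod 3$ also works.)
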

This is a direct consequence of the following stronger and quantitative result. Although this result is not our primary objective, it appears to be the first full characterization of the unreachable dimension $N_0$ for all $L_0\in\mathcal{N}$.  Moreover, this full characterization is strongly linked to our new classification criteria. 
\begin{proposition}\label{lem: counting}
For any $n\in\N$, suppose that the decomposition of the positive integer $n$ is given by
\begin{equation}\label{eq: decomposition of integer n}
    n=3^\alpha p_1^{\beta_1}\cdots p_r^{\beta_r} q_1^{\gamma_1}\cdots q_s^{\gamma_s},\ \alpha,\beta_i,\gamma_j\in \mathbb{N}.
\end{equation}
Here $p_i\equiv 1\pmod{3}$ and $q_j\equiv 2\pmod{3}$ are distinct prime integers. Let $N(n)$ denote the number of positive integer solutions to the equation \eqref{integersolution} for $(a,b)\in\N^*\times\N^*$. In general, let $Z(n)$ denote the number of integer solutions to the equation \eqref{integersolution}. Thus, $Z(n)$ and $N(n)$ have the following properties
\begin{enumerate}
    \item If at least one of the $\gamma_j$'s in \eqref{eq: decomposition of integer n} is odd, then the equation \eqref{integersolution} has no integer solutions, i.e. $Z(n)=0$;
    \item If the $\gamma_j$'s in \eqref{eq: decomposition of integer n} are all even, then 
    \begin{equation*}
        Z(n)=\left\{\begin{array}{ll}
           6(\beta_1+1)\cdots (\beta_r+1), & r>0 \\
            6, & r=0.
        \end{array}
    \right.
    \end{equation*}
    \item If the $\gamma_j$'s in \eqref{eq: decomposition of integer n} are all even, then
    \begin{equation*}
        N(n)=\left\{\begin{array}{ll}
           \frac{1}{6}Z(n), & n  \text{ is not a perfect square},\\
           \frac{1}{6}Z(n)-1, &n  \text{ is a perfect square}.
        \end{array}
    \right.
    \end{equation*}
\end{enumerate}
\end{proposition}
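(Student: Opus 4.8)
The statement is a classical fact from the theory of binary quadratic forms, specifically the arithmetic of the Eisenstein integers $\Z[\omega]$ with $\omega = e^{2\pi \ii/3}$, where the norm form is exactly $N(a+b\omega) = a^2 - ab + b^2$. The plan is to reduce the count of representations by $a^2+ab+b^2$ to the count of ideals of a given norm in the principal ideal domain $\Z[\omega]$, then peel off unit and conjugation symmetries. First I would observe that $a^2+ab+b^2 = a^2 - a(-b) + (-b)^2$, so up to the sign change $b \mapsto -b$ the representations of $n$ by $a^2+ab+b^2$ are in bijection with those by the standard Eisenstein norm form; hence it suffices to count elements of $\Z[\omega]$ of norm $n$.

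The key steps, in order: (1) Recall that $\Z[\omega]$ is a PID with unit group $\{\pm 1, \pm\omega, \pm\omega^2\}$ of order $6$; its primes are, up to units, the ramified prime $\sqrt{-3} = 1 - \omega$ (of norm $3$), the split primes dividing rational primes $p \equiv 1 \pmod 3$ (each such $p = \pi_p \bar\pi_p$ with $N(\pi_p)=p$), and the inert primes $q \equiv 2 \pmod 3$ (with $N(q)=q^2$). (2) Using multiplicativity of the norm and unique factorization, deduce that $n = 3^\alpha \prod p_i^{\beta_i} \prod q_j^{\gamma_j}$ is a norm if and only if every $\gamma_j$ is even — this gives item (1), since if some $\gamma_j$ is odd the inert prime $q_j$ appears to an odd power in any putative factorization of an element of norm $n$, a contradiction. (3) When all $\gamma_j$ are even, count the elements of norm $n$: the factor $3^\alpha$ forces exactly $\alpha$ copies of $1-\omega$ (up to a unit, since $3 = -\omega^2(1-\omega)^2$), each inert $q_j$ contributes a fixed power $q_j^{\gamma_j/2}$, and each split prime $p_i^{\beta_i}$ contributes a choice of $\pi_p^{t}\bar\pi_p^{\beta_i - t}$ for $0 \le t \le \beta_i$, giving $\prod_{i=1}^r(\beta_i+1)$ essentially distinct elements; multiplying by the $6$ units yields $Z(n) = 6\prod(\beta_i+1)$ (and $Z(n)=6$ when $r=0$), which is item (2). (4) For item (3), pass from all integer solutions to those with $a,b \in \N^*$: the six units act on the solution set, and this action partitions $\Z^2 \setminus \{0\}$-solutions into orbits; one checks that a generic orbit of size $6$ meets the open first "sextant" $\{a>0, b>0\}$ in exactly one point, so $N(n) = Z(n)/6$, except that when $n$ is a perfect square the solution $(a,b)=(\sqrt n, 0)$ and its orbit lie on the boundary (contributing nothing to $N(n)$), which removes exactly one from the naive count $Z(n)/6$, giving the stated correction $N(n) = \frac16 Z(n) - 1$.

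The main obstacle I anticipate is step (4): the careful bookkeeping of which unit orbits intersect the region $\{a \ge 1, b \ge 1\}$ exactly once versus those that hit coordinate axes or the line $a = b$ or $a + b = 0$. One must verify that, apart from the perfect-square case, no orbit is "degenerate" — i.e. every orbit of a norm-$n$ element with $n$ not a perfect square has trivial stabilizer and contributes exactly one point to the first sextant. Concretely, a stabilizer would force $a^2+ab+b^2$ to be fixed by a nontrivial rotation by a sixth root of unity, which pins the element to $0$; and an orbit touching $\{b=0\}$ forces $n = a^2$, a perfect square. I would also double-check the boundary behaviour at $a=b$ (the point $(a,a)$ has norm $3a^2$, handled within the generic count since $3a^2$ is a perfect square only when $3 \mid a$, but its orbit still has size $6$ and meets the open sextant once) to make sure no off-by-one slips in. Everything else is routine unique-factorization arithmetic in $\Z[\omega]$ that I would not grind through in detail.
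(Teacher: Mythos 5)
Your proposal is correct and follows essentially the same route as the paper: both reduce the count to unique factorization in $\Z[\omega]$, classify primes by residue mod $3$, enumerate elements of norm $n$ via the choices $\pi_{p_i}^{u_i}\bar\pi_{p_i}^{\beta_i-u_i}$, and observe that each unit orbit of size $6$ meets the positive quadrant exactly once except the orbit through $(\sqrt n,0)$ when $n$ is a perfect square. The only differences are cosmetic (the paper identifies $(a,b)$ with $a-b\omega$ while you pass through $b\mapsto -b$), plus one inconsequential slip in your aside — $3a^2$ is a perfect square only when $a=0$, not when $3\mid a$ — which does not affect the argument.
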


\begin{proof}[Proof of Proposition \ref{lem: counting}]
Let $\omega:=e^{\frac{2\pi i}{3}}=\frac{-1+\sqrt{3}i}{2}$ and $\mathbb{Z}[\omega]:=\{a+b\omega\mid a,b\in \mathbb{Z}\}$. The norm of $a+b\omega\in \mathbb{Z}[\omega]$ is defined as
\[N(a+b\omega):=|a+b\omega|^2=(a+b\omega)(a+b\bar{\omega})=a^2-ab+b^2.\]
Then for $n\in \mathbb{N}$, the equation \eqref{integersolution} $ a^2+ab+b^2=n$, where $a,b\in \mathbb{Z}$, is associated to the norm relation $N(a-b\omega)=n$. We shall identify $(a,b)$ with $a-b\omega$.

We apply the following well-known properties of $\mathbb{Z}[\omega]$. See the book \cite[\S 4.A]{Cox13} for details.

\begin{lemma} $\mathbb{Z}[\omega]$ has the following properties:
    \begin{enumerate}
        \item[(1)] $\mathbb{Z}[\omega]$ is a unique factorization domain.

        \item[(2)] The units in $\mathbb{Z}[\omega]$ are $\{\pm 1,\pm \omega,\pm \omega^2\}=\{(-\omega)^k\mid k=1,\dots 6\}$.

        \item[(3)] Let $p\in \mathbb{N}$ be a prime integer.

        \begin{itemize}
            \item If $p=3$, then $p=-\omega^2(1-\omega)^2$, where $1-\omega$ is a prime and $N(1-\omega)=3$;

            \item If $p\equiv 1\pmod{3}$, then $p=\pi_p\bar{\pi}_p$, where $\pi_p,\bar{\pi}_p$ are primes and $N(\pi_p)=N(\bar{\pi}_p)=p$;

            \item If $p\equiv 2\pmod{3}$, then $p$ remains a prime in $\mathbb{Z}[\omega]$, and $N(p)=p^2$.
        \end{itemize}

        Moreover, every prime element in $\mathbb{Z}[\omega]$ is a divisor of a unique prime integer $p$.
    \end{enumerate}
\end{lemma}
Armed with the preceding result,  we are now in a position to prove the first two statements. It's reduced to find the number of $a-b\omega\in \mathbb{Z}[\omega]$ with norm $n$. In view of the classification of prime elements in $\mathbb{Z}[\omega]$, the power of each $q_j$ in $N(a-b\omega)$ must be even. And if all of the $\gamma_j$'s are even, then $N(a-b\omega)=n$ if and only if
    \[a-b\omega=(-\omega)^k \cdot \prod_{i=1}^r \pi_{p_i}^{u_i}\bar{\pi}_{p_i}^{\beta_i-u_i}\cdot \prod_{j=1}^s q_j^{\frac{\gamma_j}{2}}.\]
    Here $k=1,\dots ,6$ and each $u_i=0,1,\dots ,\beta_i$. And the first two statements hold. 
    
    We turn to the last statement. It can be verified that, if $n$ is not a perfect square, then each class contains exactly one solution $a-b\omega$ with $a,b>0$; and if $n=m^2$, then this property still holds for all classes but one:
    \[\{(-\omega)^k m\mid k=1,\dots ,6\}=\{\pm m,\pm m\omega,\pm (m+m\omega)\},\]
    which contains no positive integer solution. 
\end{proof}
\subsection{Further applications of this classification}\label{sec: further perspectives} 
We believe this new criterion is essential and can be applied to a wide range of problems.
In the sequel, we briefly comment on the further perspectives where our new classification may be applicable.

\subsubsection{Controllability and stability via a  limiting process}
As presented in Section \ref{sec: history}, considering controllability and stability,  there is a significant distinction between the cases $L\notin\mathcal{N}$ and $L\in\mathcal{N}$. Let us take the linear KdV system \eqref{eq: intro-linear-KdV} as an example.
\begin{itemize}
    \item Null controllability holds for $L\notin\mathcal{N}$ and there exists a constant $C(T,L)>0$ such that 
    \[
    \|u\|_{L^2(0,T)}\leq C(T,L)\|y_0\|_{L^2(0,L)}.
    \]
    However, for $L\in\mathcal{N}$, there exists an unreachable subspace $M$ defined as \eqref{eq: defi-M}. A natural expectation is that 
    \[
    C(T,L)\to\infty,\mbox{ as } L\to\mathcal{N}.
    \]
    Unfortunately, the classical compactness-uniqueness method (see \cite{Rosier97}) can only ensure the existence of this constant $C(T,L)$. There is a lack of an estimate of the size of $C(T,L)$ and meanwhile the dependence of the parameter $L$ is also unknown. 

    To track the dependence of $L$, we need to adapt the moment method involving the eigenfunctions of $\A_{L}$ (defined in \eqref{def:ope:A}). Considering the eigenvalue problem:
    \[
\A_L\G(L)=\ii\lambda(L)\G(L), \mbox{ in }(0,L).
\]
We have to analyze the asymptotic behavior of each eigenmode $(\G(L),\ii\lambda(L))$ as $L\to\mathcal{N}$ to detect where the control cost blows up. At this stage, our new classification criteria come into play. For $L_0\in \mathcal{N}^2$, there are no Type 2 eigenfunctions. We need to track those eigenmodes $(\G(L),\ii\lambda(L))$ such that
\[
(\G(L),\ii\lambda(L))\to\mbox{Type 1 eigenmodes, as }L\to L_0.
\]
As for $L_0\in\mathcal{N}^1$ or $L_0\in\mathcal{N}^3$, the asymptotic behaviors of $(\G(L),\ii\lambda(L))$ are rather complicated due to the appearance of Type 2 eigenmodes. We will provide a detailed and complete analysis in a forthcoming paper and as a consequence, we give a quantitative estimate of the important constant $C(T,L)$.
\vspace{2mm}
\item When it comes to the stability problem, we refer to \eqref{eq: intro-linear-KdV} with $u\equiv0$. For $L\notin\mathcal{N}$, an exponential stability holds
\[
\|y(t)\|^2_{L^2(0,L)}\leq Ce^{-R(L) t}\|y_0\|^2_{L^2(0,L)}, \forall t>0\mbox{ with a constant }R(L)>0,
\]
while for $L\in\mathcal{N}$, no stability is observed in $M$ due to the existence of traveling waves $e^{\ii t\lambda}\varphi_{\lambda}$ with $\G_{\lambda}$ defined in \eqref{eq: type-1-eigenfunction-intro}. A natural expectation is 
\[
R(L)\to0,\mbox{ as } L\to\mathcal{N}.
\]
Unfortunately, the existence of this constant $R(L)$ is again ensured by a compactness-uniqueness method. Applying Hilbert uniqueness method twice, we can transform this stability problem into a controllability problem. Then the asymptotic behaviors of eigenmodes $(\G(L),\ii\lambda(L))$ will play a significant role.  
\end{itemize}
\begin{remark}
Now we change a point of view. By a rescaling process, we could write 
\[
\A_L=\A_{L_0}+(L-L_0)\mathcal{R}_L,
\]
with $\mathcal{R}_L=\frac{L_0^2+L_0L+L^2}{L_0^3}\A_{L_0}-\frac{L(L+L_0)}{L_0^3}\p_x$.  For $L_0\in\mathcal{N}$, we can also see the previous discussion on controllability/stability as a uniform controllability/stability problem for the following system
\[
\left\{
\begin{array}{ll}
\p_t y -\A_{L_0}y-(L-L_0)\mathcal{R}_Ly = 0, &  \text{ in }  (0, T)\times(0, L_0), \\
y(t, 0) = y(t, L_0) = 0, & \text{ in } (0, T), \\
\p_x y(t , L) = u(t), & \text{ in } (0, T),
\end{array}\right.
\]
under the perturbation $(L-L_0)\mathcal{R}_L$ as $L\to L_0$.
\end{remark}

\subsubsection{Small-time local controllability and the open problem}
We observe that the existing results on small-time local controllability pertain to the cases $\mathcal{N}^1$ \cite{CC04} and $\mathcal{N}^2$ \cite{CKN}. The only remaining case to be addressed is $L \in \mathcal{N}^3$, which forms the primary objective of this paper.

\subsubsection{Large-time local controllability}
For $L\in \mathcal{N}_2$,  Cerpa \cite{Cerpa07} demonstrated that the system is large-time locally controllable. As discussed, when the dimension of MM is 2, the critical length can belong to either $\mathcal{N}^2$ or $\mathcal{N}^3$. We believe that by distinguishing between these two classes, it may be possible to gain deeper insights into large-time local controllability, such as obtaining more precise quantitative estimates. Likewise, this approach could lead to improvements in the results of \cite{CC09}.

\subsubsection{Exponential stabilization at critical legnths}
The exponential stabilization in $\mathcal{N}_2 \cap \mathcal{N}_3$ depends on the large-time local controllability property \cite{coron-rivas-xiang}. Thus it is reasonable to consider exponential stabilization under the new classification. We first consider the problem in $\mathcal{N}^2$, then in $\mathcal{N}^1$ and $\mathcal{N}^3$. 

\subsubsection{Asymptotic stability without control at critical lengths}

We also aim to achieve more comprehensive and refined results concerning asymptotic stability. Currently, it remains an open question whether the KdV system \eqref{intro-sys-KdV} exhibits polynomial decay when the dimension $\dim M$ is odd and greater than 1. Even in the even-dimensional setting, where Nguyen established polynomial decay \cite{Nguyen}, several cases remain unclear. In particular, it would be valuable to determine whether the decay rates differ between the cases $\mathcal{N}^2$ and $\mathcal{N}^3$ (still in even-dimensional setting). For the latter, the presence of Type 2 eigenmodes may introduce certain degeneracies, potentially affecting the decay behavior.

\section{Some preparations for the proof}\label{sec: preparations}
For clarity and completeness, we present some useful elements necessary for subsequent proofs.  
\begin{itemize}
    \item Section \ref{sec: well-posedness} reviews classic and recent results on the well-posedness of both linear and nonlinear KdV equations. We apply these well-posedness estimates in Section \ref{sec: obstruction} to prove the obstruction to small-time controllability. 
    \item In Section \ref{sec: 1d microlocal analysis}, we introduce fundamental concepts of microlocal analysis, focusing particularly on one-dimensional cases. These technical lemmas play a crucial role in deriving the coercive estimate for $Q_M$, detailed in Proposition \ref{prop: coercive property}. 
    \item In Section \ref{sec: control-0-0}, we characterize controls that steer $0$ to $0$ and establish the asymptotic expansion for the eigenvalues of stationary KdV operators. This analysis provides some special functions and expansions, which will be utilized in the proof of Lemma \ref{lem: B asymp}.
\end{itemize}

\subsection{Well-posedness of  KdV systems}\label{sec: well-posedness}
The following well-posedness results on both linear and nonlinear KdV equations can be found in \cite{CKN}, which are useful in the proof of Main Theorem. This subject has received extensive study over recent decades (see for example \cite{Bona03,BSZ-2, coron-rivas-xiang}). Additionally, studies on Kato smoothing effects, as detailed in \cite{KX,Rosier97}, are also noteworthy.

\begin{lemma} \label{lem: inhomogenous-kdv}  (The inhomogeneous linearized system,  \cite[Lemma 4.6]{CKN})
 Let  $h = (h_1, h_2, h_3) \in
H^{\frac{1}{3}}(\R_+)\times H^{\frac{1}{3}}(\R_+) \times L^2(\R_+)$, $f_1 \in L^1 \left((0, T) \times
(0, L) \right)$, and $f_2 \in L^1 \left((0, T); W^{1,1}(0, L) \right)$ satisfying
$f_2 (t, 0) = f_2 (t, L) = 0$.
Assume that the function $f = f_1 +\p_x f_2$ belongs to $L^1(\R_+; L^2(0, L))$. Let  $y \in
C\left([0, + \infty); L^2(0, L) \right) \cap L^2_{loc}\left([0, + \infty); H^1(0, L) \right) $
be the unique solution of 
\begin{equation}\label{eq: general linear kdv}
\left\{
\begin{array}{cl}
\p_t y + \p_x^3y  + \p_xy = f &  \mbox{ in } (0, +\infty) \times (0,
L), \\
y(t, 0) = h_1(t), \;  y(t, L) = h_2 (t), \;  \p_xy(t ,  L) = h_3(t) & \mbox{ in } (0,
+\infty),\\
y(0, x)  = 0, &\mbox{ in } (0, L).
\end{array}\right.
\end{equation}
Then 
\begin{equation*}
\| y \|_{L^2 \left( (0, T) \times (0, L) \right)} \le C_T \left( \| (h_1, h_2) \|_{L^2(\R_+)}
+ \|h_3 \|_{H^{-\frac{1}{3}}(\R)} + \| f\|_{L^1(\R_+ \times (0, L))} \right),
\end{equation*}
and
\begin{equation*}
\| y \|_{L^2 \left( (0, T); H^{-1} (0, L) \right)} \le C_T \left( \| (h_1, h_2)
\|_{H^{-\frac{1}{3}}(\R)} + \|h_3 \|_{H^{-\frac{2}{3}}(\R)} + \| (f_1, f_2)\|_{L^1(\R_+ \times (0, L))}
\right).
\end{equation*}
In addition, if for some $T_1\in (0, T)$ there is   $h(t, \cdot) = 0$ and $f(t, \cdot) = 0$ for $t \ge T_1$, then  for any $\delta > 0$ and for $t\in [T_1+ \delta, T]$ there is 
\begin{equation*}
|\p_ty (t, x)| + |\p_xy (t, x)| \le C_{T, T_1, \delta} \left( \| (h_1, h_2) \|_{H^{-\frac{1}{3}}(\R)}
+ \|h_3 \|_{H^{-\frac{2}{3}}(\R)} + \| (f_1, f_2) \|_{L^1(\R_+ \times (0, L))} \right).
\end{equation*}
\end{lemma}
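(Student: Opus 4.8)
The statement is quoted verbatim from \cite[Lemma 4.6]{CKN}, so the plan is to recall how it follows from the now-standard well-posedness theory for the boundary-controlled KdV equation on a bounded interval, supplemented by a Kato-type smoothing effect. First I would reduce to a cleaner situation: using a standard boundary lifting (extend $h_1,h_2,h_3$ to functions on $\R_+$ and subtract a profile, smooth in $x$, matching the three traces), the problem with nonzero boundary data and source $f = f_1 + \p_x f_2$ is converted into one with homogeneous boundary conditions and a modified source consisting of an $L^1(\R_+;L^2(0,L))$ term plus a term in divergence form $\p_x(\cdot)$. The boundary contributions are controlled by $\|(h_1,h_2)\|_{L^2(\R_+)} + \|h_3\|_{H^{-1/3}(\R)}$ (respectively their $H^{-1/3}$, $H^{-2/3}$ versions) precisely because these are the sharp trace spaces for the KdV Cauchy problem. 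Duhamel's formula then writes $y$ in terms of the $C_0$-semigroup $(e^{tA})_{t\ge 0}$ generated by $A\varphi = -\varphi''' - \varphi'$ with the three homogeneous boundary conditions $\varphi(0)=\varphi(L)=\p_x\varphi(L)=0$.

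For the first two estimates I would run the multiplier/energy method of Rosier \cite{Rosier97}: testing the equation against $y$, against $(T-t)y$, and against $xy$ and integrating by parts yields, after absorbing the resulting boundary terms by the trace estimates above, the bound in $L^2((0,T)\times(0,L))$ in terms of $\|(h_1,h_2)\|_{L^2(\R_+)}+\|h_3\|_{H^{-1/3}(\R)}+\|f\|_{L^1(\R_+\times(0,L))}$. The $L^2((0,T);H^{-1}(0,L))$ bound is obtained by duality: one tests $y$ against solutions of the adjoint backward problem $-\p_t z - \p_x^3 z - \p_x z = g$, $z(T,\cdot)=0$, with $g\in L^2((0,T);H^1_0(0,L))$, and invokes the hidden-regularity (sharp trace) estimates for that adjoint problem — this is where the gain of one derivative, i.e.\ passing from $L^2$-scale data to $H^{-1}$-scale solutions, comes from, and it is the reason the divergence-form source $\p_x f_2$ only needs $f_2 \in L^1(\R_+\times(0,L))$.

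For the last, pointwise estimate, observe that for $t \ge T_1$ the source and boundary data vanish, so $y(t,\cdot) = e^{(t-T_1)A}y(T_1,\cdot)$, and $\|y(T_1,\cdot)\|_{H^{-1}(0,L)}$ is already controlled by the previous step. The operator $A$ enjoys a Kato-type smoothing effect on the bounded interval: $e^{tA}$ maps $H^{-1}(0,L)$ into $H^s(0,L)$ for every $s$ and every $t>0$, with operator norm bounded on $[\delta,T]$ (blowing up as $t\to 0^+$); iterating this up to $s=3$, using $\p_t y = Ay \in C([T_1+\delta,T];H^0)$ and the embedding $H^2(0,L)\hookrightarrow C^1([0,L])$, yields the claimed uniform bound on $|\p_t y(t,x)| + |\p_x y(t,x)|$ for $t\in[T_1+\delta,T]$.

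The routine energy computations aside, the genuine technical content is twofold: establishing the sharp trace / hidden-regularity estimates for the adjoint backward KdV problem that drive the duality argument (these rest on the Laplace-transform / resolvent analysis of $A$ and are the heart of the Bona--Sun--Zhang-type theory), and the smoothing statement for $e^{tA}$, which is classical but again relies on a careful resolvent bound for $A$. I expect the trace estimates to be the main obstacle to a self-contained proof; since the lemma is taken directly from \cite{CKN}, in the paper it suffices to cite it and reuse it as a black box in Section \ref{sec: obstruction}.
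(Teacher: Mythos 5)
The paper does not prove this lemma: it is quoted verbatim from \cite[Lemma 4.6]{CKN} and used as a black box, as you yourself observe at the end of your write-up. So there is no ``paper's own proof'' to compare against, and your task here reduces to assessing whether your sketch could plausibly stand in for the cited one.

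Your outline is sound in broad strokes, but it takes a genuinely different route from the one in \cite{CKN}. There the proof hinges on the explicit Fourier-in-time representation of the boundary-to-state map (restated in this paper as Lemma \ref{lem: formulation sol}, i.e.\ $\hat y(\tau,x)$ expressed through the characteristic roots $\lambda_j(\tau)$ of $\lambda^3+\lambda+\ii\tau=0$), combined with the asymptotics $\lambda_j(\tau)\sim \mu_j\tau^{1/3}$ of Lemma \ref{lem: lambda-asymp}. It is precisely this cube-root behavior that produces the $H^{-1/3}$ and $H^{-2/3}$ norms on the boundary data; they are not ``abstract sharp trace spaces'' but arise by inspection of the symbol. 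Your sketch instead proposes Rosier-type multipliers ($y$, $(T-t)y$, $xy$) for the $L^2$ bound and a duality argument against the adjoint backward problem for the $H^{-1}$ bound. The multiplier route does give the $L^2$-data case cleanly, but to push the boundary data down to the fractional negative scales one still needs either the Fourier-side resolvent analysis or the Bona--Sun--Zhang boundary-integral machinery — neither of which your paragraph actually carries out; you flag this yourself as ``the main obstacle to a self-contained proof.'' For the final pointwise estimate, your plan (propagate from $t=T_1$ with $e^{(t-T_1)A}$ and iterate Kato smoothing up to $C^1$) is reasonable, but note that the preceding estimate only controls $\|y\|_{L^2((0,T);H^{-1})}$, not $\|y(T_1,\cdot)\|_{H^{-1}}$ pointwise in time; one more step (e.g.\ a mean-value argument in $t$ plus the evolution, as in \cite{CKN}) is needed to extract a pointwise-in-time bound at a time slightly before $T_1+\delta$. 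In short: your decomposition (lifting, Duhamel, energy, duality, smoothing) is a viable alternative skeleton, but it is not the CKN argument, and the step that actually produces the fractional Sobolev exponents is the one you leave unproved.
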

In particular, concerning the controlled nonlinear KdV systems, we have the following 
\begin{lemma} \label{lem: kdv-NL} (The nonlinear system, \cite[Lemma 5.4]{CKN},\cite[Theorem 1.2]{Bona03})  There exists  constants $\varepsilon_0>0 $ and $C>0$ such that for any  $y_0 \in L^2(0, L)$ and any $u \in L^2(\R_+)$
satisfying 
\[
\| y_0 \|_{L^2(0, L)} + \| u \|_{L^2(\R_+)} \le \varepsilon_0,
\]
 the unique solution of 
\begin{equation*}\left\{
\begin{array}{cl}
\p_t y  + \p_x^3y  + \p_x y   +  y \p_x y = 0 &  \text{ in }(0, + \infty)
\times  (0, L), \\
y(t, 0) = y(t, L) = 0, \p_xy(t ,  L) = u(t) & \mbox{ in } (0, + \infty),  \\
y(0, \cdot) = y_0
\end{array}\right.
\end{equation*}
belongs to $C\left([0, + \infty); L^2(0, L) \right) \cap L^2_{loc}\left([0, + \infty); H^1(0, L) \right)$. Furthermore, it satisfies 
\begin{gather*}
\| y \|_{L^2\left((0, T) \times (0, L) \right)} \le C \left( \| y_0 \|_{L^2(0, L)} + \| u
\|_{H^{-\frac{1}{3}}(\R)} \right),\\
\| y \|_{L^2\left((0, T); H^{1} (0, L) \right)} \le C \left( \| y_0 \|_{L^2(0, L)} + \| u
\|_{L^2(\R_+)} \right),\\
\| y \|_{L^2\left((0, T); H^{-1} (0, L) \right)} \le C \left( \| y_0 \|_{L^2(0, L)} + \| u
\|_{H^{-\frac{2}{3}}(\R)} \right).
\end{gather*}
\end{lemma}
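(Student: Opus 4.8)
This is a small-data global well-posedness statement; following \cite{Bona03} and \cite{CKN}, I would obtain the solution by a contraction mapping argument built on the linear estimates and then globalize it via an $L^2$ energy identity. Fix $T>0$ and set
\[
Y_T := C\bigl([0,T];L^2(0,L)\bigr)\cap L^2\bigl(0,T;H^1(0,L)\bigr),\qquad \|v\|_{Y_T}:=\|v\|_{L^\infty(0,T;L^2(0,L))}+\|v\|_{L^2(0,T;H^1(0,L))}.
\]
Decompose the sought solution as $y=y_{\mathrm{lin}}+z$, where $y_{\mathrm{lin}}\in Y_T$ solves the homogeneous linearized system \eqref{eq: intro-linear-KdV} with initial datum $y_0$ and control $u$ --- so that $\|y_{\mathrm{lin}}\|_{Y_T}\lesssim\|y_0\|_{L^2(0,L)}+\|u\|_{L^2(\R_+)}$ by Rosier's linear theory \cite{Rosier97} and Lemma \ref{lem: inhomogenous-kdv} (case $f=0$) --- and $z$ is found as a fixed point of the map $\Gamma$ sending $w\in Y_T$ to the solution of \eqref{eq: general linear kdv} with zero initial and boundary data and source
\[
f=-(y_{\mathrm{lin}}+w)\,\p_x(y_{\mathrm{lin}}+w)=-\tfrac12\,\p_x\bigl((y_{\mathrm{lin}}+w)^2\bigr)=:\p_x f_2.
\]
Since $y_{\mathrm{lin}}$ and $w$ vanish at $x=0,L$, so does $f_2$, and the inhomogeneous linear theory --- Lemma \ref{lem: inhomogenous-kdv} together with the standard energy/Kato-smoothing bound $\|z\|_{Y_T}\lesssim\|f\|_{L^1(0,T;L^2(0,L))}$ for such data --- applies to $\Gamma(w)$.

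\textbf{Nonlinear estimate and fixed point.} As all functions in play vanish at the endpoints, the one-dimensional Gagliardo--Nirenberg inequality $\|v\|_{L^\infty(0,L)}\lesssim\|v\|_{L^2(0,L)}^{1/2}\|v\|_{H^1(0,L)}^{1/2}$ gives, for $v\in Y_T$,
\[
\|v\,\p_x v\|_{L^1(0,T;L^2(0,L))}\lesssim\int_0^T\|v(t)\|_{L^2(0,L)}^{1/2}\|v(t)\|_{H^1(0,L)}^{3/2}\,dt\lesssim T^{1/4}\,\|v\|_{Y_T}^2,
\]
together with the bilinear counterpart $\|v\,\p_x v-\tilde v\,\p_x\tilde v\|_{L^1(0,T;L^2(0,L))}\lesssim T^{1/4}\bigl(\|v\|_{Y_T}+\|\tilde v\|_{Y_T}\bigr)\|v-\tilde v\|_{Y_T}$. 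Hence, for $T$ fixed and $\|y_0\|_{L^2(0,L)}+\|u\|_{L^2(\R_+)}\le\varepsilon_0$ small enough, $\Gamma$ maps a small ball of $Y_T$ into itself and is a contraction there; its fixed point $z$ yields $y=y_{\mathrm{lin}}+z\in Y_T$, and uniqueness follows by the same estimate applied to a difference of two solutions. The three quantitative bounds in the statement are then obtained by inserting $y$ into the corresponding linear estimates --- those of Lemma \ref{lem: inhomogenous-kdv} for the $L^2((0,T)\times(0,L))$ and $L^2(0,T;H^{-1}(0,L))$ bounds, and the Kato-smoothing estimate for the $L^2(0,T;H^1(0,L))$ bound --- the quadratic source term being absorbed by smallness.

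\textbf{Globalization.} To pass from $[0,T]$ to $[0,+\infty)$ with $T$-independent constants, I would close the $L^2$ energy identity: multiplying the equation by $y$ and integrating over $(0,L)$, the transport term $\int_0^L y\,\p_x y$ and the cubic term $\int_0^L y^2\,\p_x y$ vanish since $y(t,0)=y(t,L)=0$, leaving
\[
\frac{d}{dt}\|y(t)\|_{L^2(0,L)}^2+|\p_x y(t,0)|^2=|u(t)|^2,
\]
so that $\|y(t)\|_{L^2(0,L)}^2\le\|y_0\|_{L^2(0,L)}^2+\|u\|_{L^2(\R_+)}^2$ for all $t\ge0$. A companion Kato multiplier identity (multiply by $x y$, the cubic term now reducing to $-\tfrac13\int_0^L y^3$, absorbed for small data) controls $\|y\|_{L^2(0,T;H^1(0,L))}$ by $\|y_0\|_{L^2(0,L)}+\|u\|_{L^2(\R_+)}$ uniformly in $T$. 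With these a priori bounds, one iterates the local construction over consecutive time windows of a fixed length, never shrinking the step, to get the solution on $[0,+\infty)$ with the stated regularity and estimates.

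\textbf{Main difficulty.} The only delicate point is the nonlinear estimate in the right functional setting: $y\,\p_x y$ is borderline for an $L^2$-based theory --- it lies only in $L^1_t H^{-1}_x$ via its divergence structure, or in $L^1_t L^2_x$ at the price of a small gain $T^{1/4}$ --- so one must simultaneously exploit the $f=\p_x f_2$ structure with $f_2$ vanishing at the endpoints and the hidden $L^2_t H^1_x$ smoothing of the linear KdV flow, and then reconcile the $T$-power loss in the contraction with the $T$-uniform energy bound required by the global conclusion.
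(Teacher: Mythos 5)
The paper does not prove this lemma: it is quoted verbatim from \cite[Lemma 5.4]{CKN} (which in turn rests on \cite{Bona03}), so there is no internal argument to compare against. Your reconstruction --- Duhamel decomposition, a contraction in $C_tL^2_x\cap L^2_tH^1_x$ closed via Gagliardo--Nirenberg and the $T^{1/4}$ gain, globalization by the $L^2$ and Kato $xy$-multiplier identities, and then reading off the quantitative bounds from Lemma \ref{lem: inhomogenous-kdv} --- is indeed the standard route taken in those references, and the individual estimates you write down (the nonlinear $L^1_tL^2_x$ bound, the dissipative energy identity $\frac{d}{dt}\|y\|^2_{L^2}+|\p_x y(t,0)|^2=|u(t)|^2$) check out.

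The one place where ``the quadratic source term being absorbed by smallness'' genuinely hides work is the third estimate, with the $H^{-\frac23}(\R)$ norm of $u$ on the right. Treating the nonlinearity as the divergence source $f=\p_x f_2$ with $f_2=-\tfrac12 y^2$ and applying the $H^{-1}$ bound of Lemma \ref{lem: inhomogenous-kdv} produces, on the right, the term $\|f_2\|_{L^1(\R_+\times(0,L))}\simeq\|y\|^2_{L^2((0,T)\times(0,L))}$, which by your first estimate is of order $(\|y_0\|_{L^2}+\|u\|_{H^{-1/3}(\R)})^2$. This is \emph{not} automatically dominated by $\|y_0\|_{L^2}+\|u\|_{H^{-2/3}(\R)}$: one must interpolate, $\|u\|_{H^{-1/3}}^2\le\|u\|_{L^2}\|u\|_{H^{-2/3}}\le\varepsilon_0\|u\|_{H^{-2/3}}$ and $\|y_0\|_{L^2}^2\le\varepsilon_0\|y_0\|_{L^2}$, before the smallness of $\varepsilon_0$ closes the bootstrap. (The same pattern, with $\|\p_x y\|_{L^2_{t,x}}\lesssim\varepsilon_0$ playing the role of the small factor, makes the first estimate close via $\|y\p_x y\|_{L^1}\lesssim\varepsilon_0\|y\|_{L^2_{t,x}}$.) Spelling that out would turn your sketch into a complete argument. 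A secondary caveat: the $xy$-multiplier identity carries a term $\int_0^T\|y\|_{L^2}^2\,dt$ that grows linearly in $T$, so the $L^2_tH^1_x$ bound is \emph{not} uniform in $T$ as you assert; this is harmless here because the lemma is only ever invoked with $T<T_*/2$, but ``uniformly in $T$'' should be dropped or qualified.
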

\begin{remark}
As observed in the preceding lemmas, we apply these results to linear and nonlinear KdV equations with low regularity data. This approach differs from traditional studies of well-posedness.
\end{remark}
\subsection{Basis elements in microlocal analysis}\label{sec: 1d microlocal analysis}
This section is devoted to presenting a few facts about the microlocal analysis on $1-$dimensional Euclidean spaces. For more details, we refer to \cite{Lerner-book,AG-book,Hormander-3} and its references. For the sake of completeness, we include the following results here. We say $a\in C^{\infty}(\R^2)$ is a symbol, belonging to $S^{m}(\R^2)$, if there are constants $C_{kl},k,l\in\N$ such that $a$ satisfies 
\begin{equation*}
|(\p_x^k\p_{\xi}^la)(x,\xi)|\leq C_{kl}\langle\xi\rangle^{m-l}, \langle\xi\rangle=(1+|\xi|^2)^{\frac{1}{2}},
\end{equation*}
For any $a\in S^{m}(\R^2)$, we define a pseudodifferential operator $a(x,D_x)$, with a convention $D_x=\frac{1}{\ii}\p_x$, by the standard quantization as follows:
\begin{equation}\label{eq: defi of pseudo-op}
a(x,D_x)u(x) = \frac{1}{2\pi} \int_{\R} \int_{\R} e^{\ii (x-z)\xi} a \left(x, \xi\right) u(z) \diff \xi\diff z,\forall u\in\mathcal{S}(\R),
\end{equation}
As an operator, $a(x,D_x)$ has the following properties:
\begin{proposition}\label{prop: properties of pseudo-op}
Let $a(x,D_x), b(x,D_x)$ be the pseudodifferential operators defined in \eqref{eq: defi of pseudo-op}. Then we have
\begin{enumerate}
    \item {\cite[Theorem 1.1.18]{Lerner-book}}Let $s,m\in\R$ and $a \in S^m(\R^2)$. Then, the operator $a(x,D_x)$ is bounded from $H^{s+m}(\R)$ to $H^s(\R)$. 
    \item {\cite[Theorem 1.1.20]{Lerner-book}}Let $m_1,m_2\in\R$ and $a\in S^{m_1},b\in S^{m_2}$. Then $a(x,D_x)\circ b(x,D_x)$ is also a pseudodifferential operator with a symbol $c\in S^{m_1+m_2}$ and we have the following asymptotic expansion\footnote{ Note that $D^k_{\xi}aD^k_{x}b\in S^{m_1+m_2-k}$.}, for $\forall n\in\N$,
\begin{equation*}
c=\sum_{k\leq n}\frac{1}{k!}D^k_{\xi}aD^k_{x}b+r_{n}(a,b),
\end{equation*}
with $r_n(a,b)\in S^{m_1+m_2-n}$. In particular, for the commutator of two pseudodifferential operators, we have $[a(x,D_x),b(x,D_x)]=\Tilde{c}(x,D_x)$, where
\[
\Tilde{c}=D_{\xi}aD_{x}b-D_{\xi}bD_{x}a+r_2(a,b)\in S^{m_1+m_2-1},
\]
with $r_2(a,b)\in S^{m_1+m_2-2}$.
\end{enumerate}
\end{proposition}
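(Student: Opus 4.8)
\emph{Proof strategy (sketch).}
Both assertions are classical facts of the H\"ormander symbolic calculus, and I would prove them by elementary manipulations of oscillatory integrals together with one genuinely analytic input, the $L^{2}$-boundedness of zeroth-order operators. The two parts are essentially independent; the natural order is to establish the composition formula (2) first and then invoke it, in a trivial instance, inside the reduction step of (1). The only point that is not a routine computation is the $L^{2}\to L^{2}$ estimate, which needs an almost-orthogonality argument rather than a direct kernel bound.

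\emph{Part (2): composition and asymptotic expansion.} For $u\in\mathcal S(\R)$ one writes $a(x,D_x)\bigl(b(x,D_x)u\bigr)$ as an iterated oscillatory integral; substituting the inner spatial variable by $x+y$ and performing a standard regularisation to justify the interchanges of integration, one obtains $a(x,D_x)\circ b(x,D_x)=c(x,D_x)$ with
\[
c(x,\xi)=\frac{1}{2\pi}\int_{\R}\int_{\R}e^{-\ii y\eta}\,a(x,\xi+\eta)\,b(x+y,\xi)\,\diff y\,\diff\eta ,
\]
read as an oscillatory integral. Taylor-expanding $a(x,\xi+\eta)$ in $\eta$ at $\eta=0$ up to order $n$, the polynomial part contributes, for each $k<n$, the term $\tfrac{1}{2\pi}\iint e^{-\ii y\eta}\tfrac{\eta^{k}}{k!}(\p_\xi^{k}a)(x,\xi)\,b(x+y,\xi)\,\diff y\,\diff\eta$; writing $\eta^{k}e^{-\ii y\eta}$ as a $k$-th order $y$-derivative of $e^{-\ii y\eta}$, integrating by parts onto $b(x+y,\xi)$, and collapsing the $\eta$-integral to the value at $y=0$ produces exactly the term $\tfrac{1}{k!}D_\xi^{k}a\,D_x^{k}b$ of the stated expansion (in the convention $D=\ii^{-1}\p$). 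The Taylor remainder is an oscillatory integral whose amplitude is built from $\p_\xi^{n}a\in S^{m_1-n}$; differentiating $c$ under the integral and using the non-stationary-phase integrations by parts afforded by the operators $\langle\eta\rangle^{-2}(1-\p_y^{2})$ and $\langle y\rangle^{-2}(1-\p_\eta^{2})$, both of which fix $e^{-\ii y\eta}$, one derives the symbol bounds showing $r_n(a,b)\in S^{m_1+m_2-n}$; taking $n=0$ gives $c\in S^{m_1+m_2}$. The commutator identity then follows by applying this expansion with $n=2$ to the composed symbols of $a(x,D_x)\circ b(x,D_x)$ and of $b(x,D_x)\circ a(x,D_x)$: the order-zero terms $ab$ and $ba$ cancel, the order-one terms combine to $D_\xi a\,D_x b-D_\xi b\,D_x a\in S^{m_1+m_2-1}$, and everything else is of order $m_1+m_2-2$.

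\emph{Part (1): Sobolev mapping properties.} First reduce to $s=m=0$: the Bessel potentials $\langle D_x\rangle^{t}$ are Fourier multipliers with symbols in $S^{t}$, and $\langle D_x\rangle^{t}\colon H^{\sigma}(\R)\to H^{\sigma-t}(\R)$ is an isometry for all $\sigma,t$; by part (2) the operator $\langle D_x\rangle^{s}a(x,D_x)\langle D_x\rangle^{-(s+m)}$ has symbol in $S^{0}$, so it is enough to prove that every $a\in S^{0}(\R^{2})$ defines a bounded operator on $L^{2}(\R)$. For this I would use the Cotlar--Stein almost-orthogonality lemma: take a smooth partition of unity $1=\sum_{\alpha\in\Z^{2}}\chi_\alpha$ of phase space $\R^{2}_{x,\xi}$ subordinate to the unit cubes centred at the lattice points, put $a_\alpha:=\chi_\alpha a$ (with symbol seminorms bounded uniformly in $\alpha$) and $T_\alpha:=a_\alpha(x,D_x)$. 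Integrating by parts in $x$ and in $\xi$ in the Schwartz kernels of $T_\alpha^{*}T_\beta$ and $T_\alpha T_\beta^{*}$ — this is where the simultaneous localisation in both variables is used — gives, for every $N$,
\[
\|T_\alpha^{*}T_\beta\|_{L^{2}\to L^{2}}+\|T_\alpha T_\beta^{*}\|_{L^{2}\to L^{2}}\le C_N\langle\alpha-\beta\rangle^{-N},
\]
whence Cotlar--Stein yields $\bigl\|\sum_\alpha T_\alpha\bigr\|_{L^{2}\to L^{2}}\le C<\infty$.

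\emph{Main obstacle.} Apart from the $L^{2}\to L^{2}$ bound, the proof is bookkeeping: the composition formula is Taylor expansion plus non-stationary phase, and the reduction to order zero is formal. The real content is the Calder\'on--Vaillancourt-type estimate for $S^{0}$ symbols; it cannot be reached by a crude Schur test, since the Schwartz kernel of $a(x,D_x)$ is too singular on the diagonal, and one genuinely needs an almost-orthogonal decomposition of phase space (Cotlar--Stein, or a Littlewood--Paley argument exploiting the full $\langle\xi\rangle^{-|\beta|}$ gain available in $S^{0}$). A secondary, organisational point is to keep the argument non-circular: one proves the composition calculus (2) first and then, in part (1), uses it only for the product of a genuine pseudodifferential operator with a Fourier multiplier, its most elementary instance.
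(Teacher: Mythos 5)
The paper does not give a proof of Proposition~\ref{prop: properties of pseudo-op}: both items are stated as citations to Lerner's textbook (Theorems~1.1.18 and~1.1.20 of \cite{Lerner-book}), so there is no in-paper argument against which to measure yours. What you have written is, in outline, the standard textbook proof of those two classical theorems, and it is essentially correct: your oscillatory-integral formula $c(x,\xi)=\tfrac{1}{2\pi}\iint e^{-\ii y\eta}a(x,\xi+\eta)b(x+y,\xi)\,\diff y\,\diff\eta$ is precisely what one gets from the Kohn--Nirenberg quantization \eqref{eq: defi of pseudo-op} after the substitutions $\theta=\xi-\eta$, $y=z-x$; the Taylor-plus-integration-by-parts extraction of the expansion terms and the non-stationary-phase control of the remainder are the standard route to $r_n\in S^{m_1+m_2-n}$; and the reduction of the $H^{s+m}\to H^s$ bound to $S^0\to\mathcal L(L^2)$ via $\langle D_x\rangle^{s}a(x,D_x)\langle D_x\rangle^{-(s+m)}$, followed by Cotlar--Stein over a lattice partition of phase space, is exactly how Calder\'on--Vaillancourt is usually established.

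One place where your sketch is a little thinner than the claim deserves is the almost-orthogonality step itself. The estimate $\|T_\alpha^*T_\beta\|+\|T_\alpha T_\beta^*\|\lesssim_N\langle\alpha-\beta\rangle^{-N}$ is not obtained by one symmetric integration by parts: separation of $\alpha$ and $\beta$ in the $\xi$-component kills $T_\alpha T_\beta^*$ outright by disjoint frequency supports, whereas separation in the $x$-component needs integration by parts in $\xi$ inside the kernel of $T_\alpha^*T_\beta$ to produce decay in the spatial distance, and one must check that the $S^0$ seminorms control the relevant derivative bounds \emph{uniformly} over the localised pieces $a_\alpha=\chi_\alpha a$. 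You do flag this as ``the real content'' and correctly dismiss a naive Schur test, so the gap is one of detail rather than of idea. You might also note that the precise constants ($\ii$-factors) in the expansion $\sum\tfrac{1}{k!}D_\xi^k a\,D_x^k b$ depend on the $D$ versus $\partial$ convention; the paper inherits Lerner's convention wholesale, so as long as you track $D=\tfrac{1}{\ii}\partial$ consistently your computation matches the cited statement.
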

\begin{proposition}[Calder\'on--Vaillancourt Theorem]
Let $a\in C^{\infty}(\R^{2d})$ with all derivatives bounded. Then, $a(x,D_x)$ is a bounded operator on $L^2(\R^d)$. Moreover, there exist constants $C_0>0$ and $N_0\in\N^*$, only depending on the dimension $d$ such that 
\begin{align*}
\|a(x,D_x)\|_{\mathcal{L}(L^2(\R^d))}\leq C_0\sum_{|\alpha|+|\beta|\leq N_0}\|\p^{\alpha}_{x}\p^{\beta}_{\xi}a\|_{L^{\infty}(T^*\R^d)}.\label{eq: L^2-bound of Opa}
\end{align*}
\end{proposition}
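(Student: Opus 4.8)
I would deduce the estimate from the Cotlar--Stein almost-orthogonality lemma after a uniform decomposition of phase space into unit cells. Since all derivatives of $a$ are bounded, $a$ belongs to the H\"ormander class $S^0_{0,0}$, so $a(x,D_x)$ is already a continuous map $\mathcal{S}(\R^d)\to\mathcal{S}(\R^d)$; it therefore suffices to prove the a priori bound $\|a(x,D_x)f\|_{L^2}\le C_0\,M_{N_0}\,\|f\|_{L^2}$ for $f\in\mathcal{S}(\R^d)$, where $M_N:=\sum_{|\alpha|+|\beta|\le N}\|\p_x^\alpha\p_\xi^\beta a\|_{L^\infty(\R^{2d})}$, and then extend by density. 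Fix $\theta,\psi\in C_c^\infty(\R^d)$ with $\sum_{j\in\Z^d}\theta(\cdot-j)\equiv1$ and $\sum_{k\in\Z^d}\psi(\cdot-k)\equiv1$, set $a_{jk}(x,\xi):=a(x,\xi)\,\theta(x-j)\,\psi(\xi-k)$ and $A_{jk}:=a_{jk}(x,D_x)$, so that $a(x,D_x)=\sum_{(j,k)\in\Z^{2d}}A_{jk}$ (a decomposition to which Cotlar--Stein will apply, providing in particular the strong $L^2$-convergence). The plan is then in two steps: (i) a uniform bound $\|A_{jk}\|_{\mathcal{L}(L^2)}\lesssim M_{N_0}$; and (ii) almost-orthogonality, $\|A_{jk}^{*}A_{j'k'}\|_{\mathcal{L}(L^2)}+\|A_{jk}A_{j'k'}^{*}\|_{\mathcal{L}(L^2)}\lesssim_{N} M_{N_0}^{2}\,(1+|j-j'|+|k-k'|)^{-N}$ for every $N$; the Cotlar--Stein lemma then yields $\|a(x,D_x)\|_{\mathcal{L}(L^2)}\lesssim M_{N_0}$ as soon as $N>4d$, which is the asserted inequality with $C_0,N_0$ depending only on $d$.

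For (i), the translation $\tau_j:u\mapsto u(\cdot+j)$ and the modulation $\mu_k:\,\mu_k u(x)=e^{\ii k\cdot x}u(x)$ are unitary on $L^2(\R^d)$ and conjugate $A_{jk}$ to the pseudodifferential operator with symbol $a(x+j,\xi+k)\,\theta(x)\,\psi(\xi)$, which is supported in the \emph{fixed} compact set $\mathrm{supp}\,\theta\times\mathrm{supp}\,\psi$ and whose derivatives of any fixed order are bounded by a constant times $M_{N_0}$ (Leibniz, $\theta,\psi$ being fixed). It thus suffices to bound $b(x,D_x)$ for $b$ supported in a fixed compact set: its Schwartz kernel $K(x,y)=(2\pi)^{-d}\int e^{\ii(x-y)\cdot\xi}b(x,\xi)\,\diff\xi$ is supported in a fixed strip in $x$, and integrating by parts in $\xi$ over the bounded $\xi$-support gives $|K(x,y)|\lesssim_{N}\langle x-y\rangle^{-N}M_{N_0}$, after which Schur's test closes (i).

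For (ii), substituting $\xi=\eta+k$ in the kernel of $A_{jk}$ gives $K_{jk}(x,y)=e^{\ii k\cdot(x-y)}\theta(x-j)\,g_{jk}(x,x-y)$ with $g_{jk}(x,w):=(2\pi)^{-d}\int e^{\ii w\cdot\eta}a(x,\eta+k)\psi(\eta)\,\diff\eta$; integrating by parts in $\eta$ shows that $g_{jk}$, together with all its derivatives, is bounded by $M_{N_0}$ and is rapidly decreasing in $w$, uniformly in $(j,k)$. The kernel of $A_{jk}A_{j'k'}^{*}$ is $\int K_{jk}(x,z)\overline{K_{j'k'}(y,z)}\,\diff z$; it vanishes unless $|j-j'|\lesssim1$ because of the $\theta$-factors, and after pulling out the phase $e^{\ii (k-k')\cdot z}$ and integrating by parts in $z$ it is bounded by $C_{N}M_{N_0}^{2}\langle k-k'\rangle^{-N}\langle x-y\rangle^{-N}$ on that strip; Schur's test turns this into the claimed decay in $(j-j',k-k')$. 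The estimate for $A_{jk}^{*}A_{j'k'}$ is entirely parallel, with the spatial localizations exchanged. Granting (i) and (ii), Cotlar--Stein finishes the proof.

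The crux is step (ii): the products $A_{jk}A_{j'k'}^{*}$ and $A_{jk}^{*}A_{j'k'}$ do not decay fast enough from the spatial supports alone, and one must extract the extra decay in the frequency indices from the oscillation $e^{\ii(k-k')\cdot z}$ — this is exactly where the boundedness of the \emph{derivatives} of $a$, rather than merely of $a$ itself, is used, and it is what fixes the admissible order $N_0$ at a dimensional constant. A more transparent but essentially equivalent alternative is the coherent-state proof: with $\phi$ a fixed $L^2$-normalized Gaussian and $\phi_w(y)=e^{\ii p\cdot y}\phi(y-q)$ for $w=(q,p)\in\R^{2d}$, the map $f\mapsto(2\pi)^{-d/2}\langle f,\phi_{\cdot}\rangle$ is an isometry of $L^2(\R^d)$ into $L^2(\R^{2d})$ intertwining $a(x,D_x)$ with the integral operator of kernel $\langle a(x,D_x)\phi_w,\phi_{w'}\rangle$; writing this matrix coefficient as the pairing of $a$ against a phase-space Gaussian carrying a linear oscillation of frequency $\sim w-w'$ and integrating by parts gives $|\langle a(x,D_x)\phi_w,\phi_{w'}\rangle|\lesssim_{N}M_{N}\langle w-w'\rangle^{-N}$, and Schur's test on $L^2(\R^{2d})$ concludes.
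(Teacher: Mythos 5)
The paper does not actually prove this proposition: it is the classical Calder\'on--Vaillancourt theorem (stated here for $S^0_{0,0}$) and is quoted as a known tool, with the surrounding discussion pointing to the standard references \cite{Lerner-book,AG-book,Hormander-3}. There is therefore no ``paper's own proof'' to compare against; what you have written is a self-contained proof of the cited fact, and the route you take (a phase-space unit-cell decomposition followed by the Cotlar--Stein almost-orthogonality lemma, essentially the Coifman--Meyer argument, together with the Bargmann/coherent-state proof as a second route) is one of the two standard ways to prove it. The structure of both steps (i) and (ii) is sound, and the dimensional constant $N>4d$ you record is the right threshold for Cotlar--Stein in $2d$ summation variables.

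There is, however, one genuine slip in step (ii), and it is worth naming precisely because it concerns the heart of the almost-orthogonality mechanism. You assert that the kernel $\int K_{jk}(x,z)\overline{K_{j'k'}(y,z)}\,\diff z$ of $A_{jk}A_{j'k'}^{*}$ ``vanishes unless $|j-j'|\lesssim1$ because of the $\theta$-factors.'' This is false as stated: the two $\theta$-factors present in this product are $\theta(x-j)$ and $\theta(y-j')$, which localize the two \emph{distinct} outer variables $x$ and $y$, not the common integration variable $z$, so they impose no support constraint linking $j$ and $j'$. What the $z$-integration actually produces is (by Plancherel) an expression containing $\psi(\xi-k)\overline{\psi(\xi-k')}$ under a single $\xi$-integral, and it is these \emph{frequency} cutoffs that force $|k-k'|\lesssim1$ for $A_{jk}A_{j'k'}^{*}$; the decay in $|j-j'|$ then comes from the non-stationary phase $e^{\ii(x-y)\cdot\xi}$ together with the spatial localizations of $x$ and $y$. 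The roles are exactly reversed for $A_{jk}^{*}A_{j'k'}$: there the common integration variable $z$ carries both $\theta(z-j)$ and $\theta(z-j')$, so that product really does vanish unless $|j-j'|\lesssim1$, and the extra decay in $|k-k'|$ is extracted from the oscillation in $z$. Your closing sentence ``entirely parallel, with the spatial localizations exchanged'' confirms that you have the two cases swapped. Once the roles of $\theta$ and $\psi$ are interchanged in the two products, both almost-orthogonality bounds hold as you state them, Cotlar--Stein applies, and the proof goes through.
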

Due to Calder\'on--Vaillancourt Theorem, we know that the operator norm of a pseudodifferential operator only depends on finite derivatives of its symbol. For the convenience of the proof later, we present an estimate of a particular commutator, between a Fourier multiplier, represented as $a(D_x)$, and a multiplication operator $b(x)$, symbolically expressed as $[a(D_x),b(x)]$. More precisely, here the case we consider is a specific example, $[\langle D_x\rangle^{-s},\varrho(\frac{\cdot}{R})]$ (we refer to Corollary \ref{cor: commutator-est} below for clear definitions). Since this is a simple example, by direct computation and standard oscillation integral techniques, we can get the dependence of the parameter $R$ even though it is not the optimal one. We include the proof in Appendix \ref{sec: commutator}. 
\begin{corollary}\label{cor: commutator-est}
 Let $\varrho\in C^{\infty}_c(\R)$ and $\supp{\varrho}\subset(-1,1)$. Set $\varrho_R:=\varrho(\frac{\cdot}{R})$. Then, for $s>0$, there exists a constant $C>0$, which is independent of $R$, such that the commutator $[\langle D_x\rangle^{-s},\varrho_R]$ satisfies
\[
\|[\langle D_x\rangle^{-s},\varrho_R]\|_{\mathcal{L}(H^{-s-1},L^2)}\leq \frac{C}{R^{s+4}},
\]
where $\langle D_x\rangle^{-s}=(1+D_x^2)^{-\frac{s}{2}}$.
\end{corollary}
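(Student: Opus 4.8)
The plan is to convert the asserted mapping property into an $L^2$-operator bound for a pseudodifferential operator whose symbol displays the gain in $R$ explicitly. Write $T_R:=[\langle D_x\rangle^{-s},\varrho_R]$. Since $\|u\|_{H^{-s-1}}=\|\langle D_x\rangle^{-s-1}u\|_{L^2}$, the estimate is equivalent to the $L^2(\R)$-boundedness of $T_R\langle D_x\rangle^{s+1}$ with the same operator norm. First I would compute the symbol of $T_R$ using the composition calculus of Proposition \ref{prop: properties of pseudo-op}: because $\langle D_x\rangle^{-s}$ and $\langle D_x\rangle^{s+1}$ are Fourier multipliers, $\varrho_R\circ\langle D_x\rangle^{-s}$ has symbol exactly $\varrho_R(x)\langle\xi\rangle^{-s}$, while $\langle D_x\rangle^{-s}\circ\varrho_R$ has symbol $\sum_{k\ge 0}\tfrac1{k!}(\partial_\xi^k\langle\xi\rangle^{-s})(D_x^k\varrho_R)(x)$ up to a smoothing remainder; subtracting cancels the $k=0$ terms, so $T_R$ has a symbol in $S^{-s-1}$ whose $k$-th contribution is $\tfrac1{k!}(\partial_\xi^k\langle\xi\rangle^{-s})\,D_x^k\varrho_R$. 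The decisive structural fact is the scaling $D_x^k\varrho_R=(iR)^{-k}(\partial^k\varrho)(x/R)$: every term picks up an explicit factor $R^{-k}$, and all of its symbol seminorms are bounded uniformly in $R$ (each additional $x$-derivative produces one further $R^{-1}$, and $\xi$-derivatives only improve the decay in $\xi$).

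Multiplying by $\langle\xi\rangle^{s+1}$, the symbol of $T_R\langle D_x\rangle^{s+1}$ has $k$-th term $(\partial_\xi^k\langle\xi\rangle^{-s})\langle\xi\rangle^{s+1}\,D_x^k\varrho_R\in S^{1-k}$, still carrying the prefactor $R^{-k}$. Hence each term with $k\ge 1$ is $L^2$-bounded by the Calder\'on--Vaillancourt theorem, with operator norm dominated by $R^{-k}$ times finitely many seminorms of $\varrho$; already the leading $k=1$ term is $O(R^{-1})$. To extract the additional powers of $R$ recorded in the statement I would instead work with the oscillatory-integral representation of the kernel of $T_R$ and iterate an integration by parts in the frequency variable: writing $\varrho_R(x)-\varrho_R(y)=R^{-1}(x-y)\int_0^1(\partial\varrho)\bigl((x+t(y-x))/R\bigr)\,dt$ lets one trade each factor $(x-y)$ for a $\partial_\xi$ falling on $\langle\xi\rangle^{-s}$; since $\partial\varrho$ is compactly supported the amplitudes stay bounded uniformly in $R$, the $\langle\xi\rangle$-order keeps dropping, and after finitely many steps a crude Schur / $L^1$-in-frequency estimate on the resulting kernel closes the bound with the stated non-optimal power $R^{-s-4}$.

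The main obstacle is the bookkeeping of the $R$-dependence through the calculus, rather than any single hard estimate. After even one integration by parts the surviving amplitude $g_R(x,y)=\int_0^1(\partial\varrho)\bigl((x+t(y-x))/R\bigr)\,dt$ depends on both variables, so it is not a plain Fourier multiplier and the Calder\'on--Vaillancourt bound must be applied to the full two-variable symbol; one has to check that each $\partial_x$ or $\partial_y$ indeed costs exactly a factor $R^{-1}$, and that the remainder terms $r_n(a,b)$ in the composition formula --- whose seminorms also involve $\varrho_R$ --- inherit enough decay in $R$ not to spoil the gain. A clean way to organize all of this is to rescale $x=R\tilde x$, which turns $\varrho_R$ into the fixed function $\varrho$ and makes the powers of $R$ appear transparently through the Jacobians and through the identity $\langle D_x\rangle=R^{-1}\langle RD_{\tilde x}\rangle$; with this normalization the estimate becomes a routine, if slightly lengthy, verification, which is the content of Appendix \ref{sec: commutator}.
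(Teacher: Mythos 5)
Your proposal identifies the paper's route correctly: reduce to an $L^2$ bound for a one-sided conjugation of the commutator, represent $[\langle D_x\rangle^{-s},\varrho_R]$ as an oscillatory integral, use the fundamental-theorem-of-calculus expansion $\varrho_R(y)-\varrho_R(x)=\frac{y-x}{R}\int_0^1 \varrho'\bigl((\theta y+(1-\theta)x)/R\bigr)\,d\theta$ to trade $(y-x)$ for a $\partial_\xi$ via integration by parts, and then close with a Schur-type kernel estimate in which every $x$-derivative falling on the cutoff produces one factor of $R^{-1}$. This is exactly the structure of the paper's argument in Appendix A.1, so the core of your plan is sound. A few points deserve correction, though. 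First, the proposed rescaling identity $\langle D_x\rangle = R^{-1}\langle R D_{\tilde x}\rangle$ is false: under $x=R\tilde x$ one has $\langle D_x\rangle^2 = 1 + R^{-2}D_{\tilde x}^2$, whereas $R^{-2}\langle R D_{\tilde x}\rangle^2 = R^{-2}+D_{\tilde x}^2$; the Bessel potential does not scale homogeneously, so this change of variables does not reduce to a fixed operator, and it is not what the paper's appendix actually does (the paper works directly with the kernel without rescaling). Second, the phrase about "extracting the additional powers of $R$" beyond the $O(R^{-1})$ first symbolic term has the direction reversed: in the regime where this is applied ($R=T^\beta$ with $T\in(0,1)$, so $R<1$), the bound $R^{-1}$ is \emph{stronger} (smaller) than the stated $R^{-(s+4)}$; the extra powers represent a loss incurred by the crude kernel estimate, not a gain to be earned. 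Concretely, the paper's iterated integration by parts in $\eta$ and $y$ yields a kernel bound of order $R^{-(2k+2)}$ for the integer $k\in(s,s+1)$, hence $\|T_R\|\lesssim R^{-(k+2)}$, which is then loosely majorized by $R^{-(s+4)}$. Finally, your symbol-calculus warm-up is harmless but unused: the paper never invokes the asymptotic composition formula, and indeed controlling the $R$-dependence of the remainder $r_n(a,b)$ would require essentially the same direct kernel work, so the detour buys nothing here.
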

Now we aim to present some basic lemmas on the Sobolev norms of compactly supported distributions. One can find the general cases in \cite[Appendix 4.3.3]{Lerner-book}. 
\begin{lemma}\label{lem: embedding-compact-support}
Let $s_1>-\frac{1}{2}$ and $s_1>s_2$ and $I=[-1,1]$. There exists a constant $C=C(I,s_1,s_2)>0$ such that for all $R\in(0,1)$ and $u\in H^{s_1}(\R)$ with $\supp u\subset [-R,R]\subset[-1,1]$, we have 
\begin{equation*}
\|u\|_{H^{s_2}(\R)}\leq CF(R,s_1,s_2)\|u\|_{H^{s_1}(\R)},
\end{equation*}
where $\lim_{R\to0}F(R,s_1,s_2)=0$ and more precisely, for $R\in(0,1)$
\begin{equation*}
F(R,s_1,s_2)=\left\{
\begin{array}{ll}
    R^{s_1-s_2} &s_2>-\frac{1}{2},  \\
    R^{s_1-s_2}(1+\ln{\frac{1}{R}})^{\frac{1}{2}} & s_2=-\frac{1}{2},\\
     R^{s_1+\frac{1}{2}}(1+\ln{\frac{1}{R}})^{\frac{1}{2}}&s_2<-\frac{1}{2}.
\end{array}
\right.
\end{equation*}
\end{lemma}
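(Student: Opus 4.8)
To prove Lemma~\ref{lem: embedding-compact-support} the plan is to cut $\widehat u$ at the threshold frequency $1/R$ into a high‑ and a low‑frequency part and estimate the two separately. The high part yields the leading factor $R^{s_1-s_2}$ and only uses $s_2<s_1$; the low part is the heart of the matter, it is the only place where $\supp{u}\subset[-R,R]$ enters in an essential way, and it is there that the threshold $s_1>-\tfrac12$ and the trichotomy in $F$ originate.

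For the high frequencies, write $\langle\xi\rangle^{2s_2}=\langle\xi\rangle^{2(s_2-s_1)}\langle\xi\rangle^{2s_1}$; since $s_2<s_1$ the factor $\langle\xi\rangle^{2(s_2-s_1)}$ is nonincreasing, so on $\{|\xi|\ge 1/R\}$ it is $\le\langle 1/R\rangle^{2(s_2-s_1)}\le R^{2(s_1-s_2)}$, whence $\int_{|\xi|\ge 1/R}\langle\xi\rangle^{2s_2}|\widehat u|^2\le R^{2(s_1-s_2)}\|u\|_{H^{s_1}}^2$; one checks that $R^{2(s_1-s_2)}$ is dominated by $F(R,s_1,s_2)^2$ in each of the three regimes.

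For the low frequencies the key input, an uncertainty‑type bound for a function supported in an interval of length $2R$, is
\[
\int_{|\xi|\le A}|\widehat u(\xi)|^2\,d\xi\ \le\ C\,A\,R^{\mu(s_1)}\,\|u\|_{H^{s_1}(\R)}^2,\qquad 0<A\le \tfrac1R,
\]
with $\mu(s_1)=1+2s_1$ for $s_1\ge 0$ and $\mu(s_1)=1+s_1$ for $-\tfrac12<s_1<0$. When $s_1\ge 0$ (so $u\in L^2$) I would expand $\int_{|\xi|\le A}|\widehat u|^2=2A\iint u(x)\overline{u(y)}\,\mathrm{sinc}\!\big(A(x-y)\big)\,dx\,dy$, where $\mathrm{sinc}(t)=\sin t/t$; since $|x-y|\le 2R$, $AR\le 1$ and $|\mathrm{sinc}|\le 1$, this is $\le C A R\,\|u\|_{L^2}^2$, and the fractional Poincaré inequality $\|v\|_{L^2(\R)}\le C(s_1)\|v\|_{\dot H^{s_1}(\R)}$ for $v$ supported in $[-1,1]$ (immediate from Rellich compactness together with the fact that no nonzero polynomial is in $L^2(\R)$), rescaled to $[-R,R]$, gives $\|u\|_{L^2}\le C R^{s_1}\|u\|_{H^{s_1}}$. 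When $-\tfrac12<s_1<0$ the distribution $u$ is no longer a function and I would instead pair $u\otimes\overline u\in H^{s_1}(\R^2)$ with $\chi_R(x)\chi_R(y)\,\mathrm{sinc}(A(x-y))$, where $\chi_R=\chi(\cdot/R)$ and $\chi\equiv 1$ on $[-1,1]$, and estimate the $H^{-s_1}(\R^2)$‑norm of this scale‑$R$ bump directly, which is where $-s_1<\tfrac12$ is used. Granting the key estimate, I would decompose the low‑frequency integral dyadically: with $A_0=\{|\xi|\le 1\}$ and $A_j=\{2^{j-1}\le|\xi|\le 2^j\}$,
\[
\int_{|\xi|<1/R}\langle\xi\rangle^{2s_2}|\widehat u|^2\ \lesssim\!\!\sum_{j\ge 0,\ 2^j\le 2/R}\!\!2^{2js_2}\int_{|\xi|\le 2^j}|\widehat u|^2\ \lesssim\ R^{\mu(s_1)}\|u\|_{H^{s_1}}^2\!\!\sum_{j\ge 0,\ 2^j\le 2/R}\!\!2^{j(2s_2+1)} .
\]
The last sum is comparable to $R^{-(2s_2+1)}$ if $s_2>-\tfrac12$, to $1+\log(1/R)$ if $s_2=-\tfrac12$, and to a constant if $s_2<-\tfrac12$; multiplying by $R^{\mu(s_1)}$, combining with the high‑frequency bound and taking square roots reproduces exactly the three formulas for $F(R,s_1,s_2)$ (when $s_1<0$ one even gets a slightly better power, so there is room to spare).

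The step I expect to be the main obstacle is the low‑frequency estimate above: it is the only place where $\supp{u}\subset[-R,R]$ is genuinely used, all constants must be tracked uniformly in $R$, and the distributional range $-\tfrac12<s_1<0$ calls for the somewhat delicate $H^{-s_1}(\R^2)$‑bound for the bump $\chi_R(x)\chi_R(y)\,\mathrm{sinc}(A(x-y))$. The assumption $s_1>-\tfrac12$ is sharp: the Dirac mass at $0$ belongs to $H^{s}(\R)$ for every $s<-\tfrac12$ and is supported at a single point of $[-R,R]$, yet its $H^{s}$‑norms do not tend to $0$ as $R\to 0$, so the conclusion fails below that threshold.
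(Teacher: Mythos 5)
The paper itself does not prove this lemma; it is quoted directly from \cite[Appendix 4.3.3]{Lerner-book}. Your strategy — cut the frequency integral at $1/R$, bound the high part using only $s_2<s_1$, and control the low part by a dyadic sum keyed to a single estimate of the form $\int_{|\xi|\le A}|\widehat u|^2\le C A R^{\mu}\|u\|_{H^{s_1}}^2$ for $AR\le 1$ — is a sound independent route. Your high‑frequency bound, your dyadic bookkeeping, and your $s_1\ge 0$ low‑frequency argument (Cauchy--Schwarz giving $\|u\|_{L^1}\le(2R)^{1/2}\|u\|_{L^2}$ together with the rescaled fractional Poincar\'e $\|u\|_{L^2}\le CR^{s_1}\|u\|_{\dot H^{s_1}}$) are all correct. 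Moreover, as you note, the dyadic sum $\sum_{2^j\le 2/R}2^{j(2s_2+1)}$ multiplied by $R^{1+2s_1}$ already reproduces $F(R,s_1,s_2)^2$ in all three regimes of $s_2$.

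The genuine gap is in the range $-\tfrac12<s_1<0$, and it has two parts. First, the stated exponent $\mu(s_1)=1+s_1$ there is \emph{false}: with $u_R(x)=R^{-1}\chi(x/R)$ ($\chi\in C_c^\infty(-1,1)$, $\widehat\chi(0)\neq 0$), a direct computation gives $\|u_R\|_{H^{s_1}}^2\asymp R^{-1-2s_1}$ while $\int_{|\xi|\le A}|\widehat{u_R}|^2\asymp A$ for $AR\le 1$, so the sharp exponent is $\mu=1+2s_1$, and since $R^{1+s_1}<R^{1+2s_1}$ for $R<1$ and $s_1<0$, your inequality is strictly stronger than what is true. (You read $1+s_1>1+2s_1$ as ``room to spare,'' but the arrow goes the wrong way: it is an unachievable improvement, not a surplus.) Second, the method you sketch to prove it — pairing $u\otimes\bar u$ with the scale‑$R$ bump in the isotropic duality $H^{s_1}(\R^2)\times H^{-s_1}(\R^2)$ — does not close, because one does not have $\|u\otimes\bar u\|_{H^{s_1}(\R^2)}\lesssim\|u\|_{H^{s_1}(\R)}^2$ when $s_1<0$: taking $u(x)=\chi(x/R)e^{iNx}$ with $N\gg 1/R$ gives $\|u\otimes\bar u\|_{H^{s_1}(\R^2)}^2/\|u\|_{H^{s_1}}^4\asymp N^{-2s_1}\to\infty$. (The norm of a tensor product does factor in the anisotropic space $H^{s_1}_xH^{s_1}_y$, which is what one would want here.) Neither defect is fatal: you only need $\mu=1+2s_1$ uniformly for $s_1>-\tfrac12$, and for $-\tfrac12<s_1<0$ this follows from a simple pointwise bound that avoids the tensor product altogether: since $\supp u\subset[-R,R]$, write $\widehat u(\xi)=(2\pi)^{-1/2}\langle u,\chi_R e^{-i\xi\cdot}\rangle$ and estimate by duality $|\widehat u(\xi)|\le C\|u\|_{H^{s_1}}\|\chi_Re^{-i\xi\cdot}\|_{H^{-s_1}}$; for $|\xi|\le 1/R$ the modulated bump has Fourier content essentially in $\{|\eta|\lesssim 1/R\}$, where $\langle\eta\rangle^{-2s_1}\lesssim R^{2s_1}$, and $\|\chi_R\|_{L^2}^2\asymp R$, giving $\|\chi_Re^{-i\xi\cdot}\|_{H^{-s_1}}^2\lesssim R^{1+2s_1}$ (this positive power is exactly where $s_1>-\tfrac12$ enters), hence $\int_{|\xi|\le A}|\widehat u|^2\lesssim AR^{1+2s_1}\|u\|_{H^{s_1}}^2$. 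With this replacement your proof goes through.
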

Its proof can be found in \cite[Appendix 4.3.3]{Lerner-book}. And we emphasize that the constant $C$ is independent of $R$.

The next proposition is called Peetre's inequality, which is an important tool in microlocal analysis.
\begin{proposition}[Peetre's inequality]\label{prop: peetre}
Let $s\in\R$. For any $\xi,\eta\in\R$,  the following inequality holds:
\begin{equation}\label{eq: peetre}
\left(\frac{1+|\xi|^2}{1+|\eta|^2}\right)^s\leq 2^{|s|}\left(1+|\xi-\eta|^2\right)^{|s|}.
\end{equation}
\end{proposition}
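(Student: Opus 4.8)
The plan is to reduce the claim to one elementary polynomial inequality in $\xi$ and $\eta$, and then obtain the general statement from monotonicity of $t\mapsto t^{s}$ on $(0,\infty)$ together with the symmetry of the problem under interchanging $\xi$ and $\eta$.

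First I would treat the case $s\geq 0$. The core observation is that
\[
1+|\xi|^2\leq 2\bigl(1+|\eta|^2\bigr)\bigl(1+|\xi-\eta|^2\bigr),\qquad \forall\,\xi,\eta\in\R.
\]
This follows by writing $\xi=\eta+(\xi-\eta)$ and applying $(a+b)^2\leq 2a^2+2b^2$ to get $|\xi|^2\leq 2|\eta|^2+2|\xi-\eta|^2$, and then bounding $1+2|\eta|^2+2|\xi-\eta|^2$ by the right-hand side, which expands to $2+2|\eta|^2+2|\xi-\eta|^2+2|\eta|^2|\xi-\eta|^2$. Since all quantities involved are strictly positive and $t\mapsto t^{s}$ is nondecreasing for $s\geq 0$, raising to the power $s$ and dividing by $(1+|\eta|^2)^s$ gives exactly \eqref{eq: peetre}, because $s=|s|$ in this case.

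Then I would deduce the case $s<0$ from the previous one by symmetry: writing $s=-|s|$, one has $\bigl((1+|\xi|^2)/(1+|\eta|^2)\bigr)^{s}=\bigl((1+|\eta|^2)/(1+|\xi|^2)\bigr)^{|s|}$, and the already-proved inequality applied with exponent $|s|\geq 0$ and with the roles of $\xi$ and $\eta$ swapped (using $|\eta-\xi|=|\xi-\eta|$) yields the bound $2^{|s|}\bigl(1+|\xi-\eta|^2\bigr)^{|s|}$. There is no genuine obstacle in this argument: the only thing that needs checking is the displayed polynomial inequality, and the remaining steps are routine manipulations with exponents and the evident symmetry of the statement in $\xi,\eta$.
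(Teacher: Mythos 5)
Your proof is correct; the key reduction is exactly the standard one. The paper states Peetre's inequality without proof, treating it as a known fact from microlocal analysis, so there is no in-paper argument to compare against. Your verification of the core polynomial inequality $1+|\xi|^2\leq 2(1+|\eta|^2)(1+|\xi-\eta|^2)$ via $|\xi|^2\leq 2|\eta|^2+2|\xi-\eta|^2$ is sound (the right-hand side dominates term by term, with $2\geq1$ and the extra nonnegative term $2|\eta|^2|\xi-\eta|^2$), the monotonicity step for $s\geq 0$ is fine, and the reduction of the case $s<0$ to $s\geq0$ by taking reciprocals and swapping $\xi\leftrightarrow\eta$ is exactly right. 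Nothing further is needed.
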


\subsection{Formulation of the controls steering 0 to 0 during time period $(0, T)$}\label{sec: control-0-0}
As what is done in \cite[Section 2]{CKN}, we define the following quantities
\begin{definition}\label{def:Q-P}
For $\tau \in \C$,  let $ \left(\lambda_j(\tau) \right)_{1\leq j \leq 3}$ be the three solutions   of (counting multiplicity)
 \begin{equation}\label{eq: lambda-eigenvalue-tau}
  \lambda^3 + \lambda + i \tau = 0.
 \end{equation}
Define 
 \begin{equation}\label{eq: defi-P-Q}
 Q (\tau) : = \sum_{j=1}^3 (\lambda_{j+1} - \lambda_j) e^{\lambda_{j} L  + \lambda_{j+1} L
},  P(\tau) : =  \sum_{j=1}^3  \lambda_j(e^{\lambda_{j+2} L } - e^{\lambda_{j+1} L}) 
\end{equation}
 and
 \begin{equation}\label{eq: defi-Xi}
\Xi(\tau) := -  (\lambda_2 - \lambda_1) (\lambda_3 - \lambda_2) (\lambda_1 - \lambda_3), 
\end{equation}
where we adapted  the convention  that $\lambda_{j+3} = \lambda_{j}$ for $j= 1,2,3$. 
\end{definition}

 For an appropriate function $v$ defined
 on $\R_+ \times (0, L)$,  we define its Fourier transform w.r.t time  as following: 
$\hat v$ its Fourier transform \footnote{It coincides with the Fourier transform of the extension of $v$ by $0$ for $t<0$.} with respect to $t$, 
\begin{equation*}
\hat v(\tau, x) = \frac{1}{\sqrt{2 \pi} }\int_0^{+\infty} v(t, x) e^{- i \tau t} dt, \forall \tau \in \C.
\end{equation*}

\begin{lemma}(\cite[Lemma 2.4]{CKN})\label{lem: formulation sol} 
Let $u \in L^2(0, + \infty)$  and let $y$ be the
unique solution of
\begin{equation}\label{eq: linearized controled kdv-0-0}
\left\{
\begin{array}{cl}
\p_t y + \p_x^3y  + \p_x y  = 0 & \mbox{ in }(0, +\infty) \times (0, L),
\\
y(t, 0) = y(t, L) = 0, \p_xy(t ,  L) = u(t) & \mbox{ in }(0, +\infty), \\
y(0, x) =  0 &\mbox{ in }(0, L).
\end{array}\right.
\end{equation}
Then,  outside of a discrete set $\tau\in \R$, there is
\begin{equation}\label{eq: defi-Fourier-y}
\Hat{y} (\tau, x) =  \frac{\Hat{u} }{\det Q}  \sum_{j=1}^3 (e^{\lambda_{j+2} L } - e^{\lambda_{j+1}
L
}) e^{\lambda_j x}, \mbox{ for a.e. } x \in (0, L).
\end{equation}
and $\partial_x\Hat{y}(\tau,0)  = \frac{\Hat{u} (\tau) P(\tau)}{\det Q(\tau)}$.
\end{lemma}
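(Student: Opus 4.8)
The plan is to turn the PDE into an ordinary differential equation in $x$ by the Fourier (Laplace) transform in time, and then to solve that ODE explicitly from the three boundary conditions.

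First I would work with $\tau$ in the lower half-plane $\{\Im\tau<0\}$, where, by the well-posedness bounds of Section \ref{sec: well-posedness} (which give $\|y(t,\cdot)\|_{L^2(0,L)}$ bounded uniformly in $t$), the Laplace transform $\hat y(\tau,\cdot)=\tfrac{1}{\sqrt{2\pi}}\int_0^{\infty}y(t,\cdot)e^{-i\tau t}\,\diff t$ converges absolutely and is holomorphic in $\tau$. Using $y(0,\cdot)=0$, the equation $\p_t y+\p_x^3 y+\p_x y=0$ transforms into
\[
\p_x^3\hat y(\tau,x)+\p_x\hat y(\tau,x)+i\tau\,\hat y(\tau,x)=0,\qquad x\in(0,L),
\]
together with $\hat y(\tau,0)=\hat y(\tau,L)=0$ and $\p_x\hat y(\tau,L)=\hat u(\tau)$; one then passes to $\tau\in\R$ by taking boundary values, which is legitimate off a discrete set.

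For fixed $\tau$ this is a constant-coefficient linear ODE whose characteristic polynomial is exactly $\lambda^3+\lambda+i\tau$ from \eqref{eq: lambda-eigenvalue-tau}, with discriminant $-4+27\tau^2$ that vanishes only at $\tau=\pm\tfrac{2}{3\sqrt3}$. Away from those two points the roots $\lambda_1(\tau),\lambda_2(\tau),\lambda_3(\tau)$ are distinct, so $\hat y(\tau,x)=\sum_{j=1}^3 c_j(\tau)e^{\lambda_j(\tau)x}$, and the boundary conditions give the linear system
\[
\begin{cases}
c_1+c_2+c_3=0,\\
c_1 e^{\lambda_1 L}+c_2 e^{\lambda_2 L}+c_3 e^{\lambda_3 L}=0,\\
c_1\lambda_1 e^{\lambda_1 L}+c_2\lambda_2 e^{\lambda_2 L}+c_3\lambda_3 e^{\lambda_3 L}=\hat u(\tau).
\end{cases}
\]
Expanding the coefficient determinant along the first row gives $(\lambda_2-\lambda_1)e^{(\lambda_1+\lambda_2)L}+(\lambda_3-\lambda_2)e^{(\lambda_2+\lambda_3)L}+(\lambda_1-\lambda_3)e^{(\lambda_3+\lambda_1)L}=\det Q(\tau)$. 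For $\tau$ outside the (discrete) zero set of $\det Q$, Cramer's rule applies: the only nonzero entry of the first column of each numerator determinant is $\hat u(\tau)$, and cofactor expansion leaves a $2\times2$ minor equal to $e^{\lambda_{j+2}L}-e^{\lambda_{j+1}L}$ with the cyclic convention $\lambda_{j+3}=\lambda_j$, so $c_j(\tau)=\hat u(\tau)(e^{\lambda_{j+2}L}-e^{\lambda_{j+1}L})/\det Q(\tau)$. Substituting back gives the stated formula for $\hat y(\tau,x)$; differentiating in $x$ and setting $x=0$ gives $\p_x\hat y(\tau,0)=\sum_{j=1}^3 c_j\lambda_j=\hat u(\tau)\sum_{j=1}^3\lambda_j(e^{\lambda_{j+2}L}-e^{\lambda_{j+1}L})/\det Q(\tau)$, which is $\hat u(\tau)P(\tau)/\det Q(\tau)$ by the definition \eqref{eq: defi-P-Q} of $P$.

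The linear algebra is routine; the main obstacle is the analytic bookkeeping. One has to make sense of $\hat y(\tau,\cdot)$ and justify that $\p_x$ commutes with the transform—hence the detour through the Laplace transform on $\{\Im\tau<0\}$ before restricting to $\R$—and one has to check that the exceptional set (the branch points $\pm\tfrac{2}{3\sqrt3}$ of $\tau\mapsto(\lambda_j(\tau))$ together with the zeros of $\det Q$) is discrete. The latter holds because $(\det Q)^2$ extends to a non-trivial entire function of $\tau$ (non-triviality follows from the large-$|\tau|$ asymptotics of the roots), so its zeros cannot accumulate. Once these points are settled, the resolution of the $3\times3$ system and its identification with $P$ and $Q$ is immediate.
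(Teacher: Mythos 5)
The paper does not prove this lemma; it cites it verbatim from \cite[Lemma 2.4]{CKN}, so there is no ``paper proof'' to compare against. Your proposal is essentially the standard argument that \cite{CKN} carries out: Laplace-transform in time on $\{\Im\tau<0\}$ to turn the PDE into the constant-coefficient ODE $\hat y'''+\hat y'+i\tau\hat y=0$, solve the $3\times3$ boundary-value system by Cramer's rule, identify the determinant with $\det Q$ and the $x$-derivative at $0$ with $\hat u P/\det Q$, and then move back to $\tau\in\R$ by taking boundary values off the exceptional set. The linear algebra is correct: I checked that the column-replacement cofactors produce exactly $e^{\lambda_{j+2}L}-e^{\lambda_{j+1}L}$ under the cyclic convention $\lambda_{j+3}=\lambda_j$, and that the coefficient-matrix determinant agrees with the paper's $Q(\tau)$ after noting $(\lambda_1-\lambda_3)e^{(\lambda_3+\lambda_1)L}=-(\lambda_3-\lambda_1)e^{(\lambda_1+\lambda_3)L}$.

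One spot that is a bit hand-wavy: the claim that $(\det Q)^2$ ``extends to a non-trivial entire function because it is a symmetric function of the roots.'' Symmetric dependence on the $\lambda_j$'s alone does not immediately yield entirety here, since $\det Q$ involves exponentials of the roots rather than polynomials. The clean route is the one the paper itself uses in Lemma~\ref{lem: defi-G-H}: write $\det Q=\Xi\cdot H$ with $\Xi$ defined by \eqref{eq: defi-Xi}; $H=\det Q/\Xi$ is symmetric in the $\lambda_j$'s and single-valued, the apparent singularities at the two branch points $\tau=\pm\tfrac{2}{3\sqrt3}$ are removable because $\det Q$ vanishes there, and $\Xi^2$ is (up to sign) the polynomial discriminant $-4+27\tau^2$. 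Hence $(\det Q)^2=\Xi^2 H^2$ is entire, and non-triviality follows from the large-$|\tau|$ asymptotics of $\lambda_3$ as you say. This is a small patch, not a real gap, but it is worth stating precisely since it is exactly where discreteness of the excluded set comes from.
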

Recall  that $\lambda_j(\tau)$ are solutions of 
\begin{equation*}
\lambda_j^3+\lambda_j+i \tau=0.
\end{equation*}
For $p\in\R$, we further define  $\Tilde{\lambda}_j(\tau; p)$ be solutions of 
\[
\Tilde{\lambda}_j^3+\Tilde{\lambda}_j-i(\Bar{\tau}-p)=0.
\]

Thanks to the equation~\eqref{eq: lambda-eigenvalue-tau} satisfied by
the $\lambda_j$, one has the following asymptotic behavior of $\lambda_j$ concerning large positive $\tau$: 
\begin{lemma}(\cite[Lemma 3.3]{CKN})\label{lem: lambda-asymp}
Let   $p \in \R$ and $\tau$ in a small enough conic neighborhood of $\R_+$. 
Consider the convention $\Re(\lambda_1) < \Re(\lambda_2) < \Re(\lambda_3)$ and similarly
for $\Tilde{\lambda}_j$. We have
\begin{equation}
\lambda_j = \mu_j\tau^{\frac{1}{3}} - \frac1{3\mu_j}\tau^{-\frac{1}{3}} + \bigO(\tau^{-\frac{2}{3}}), \;\; |\tau|\gg 1
\end{equation}
\begin{equation}
\Tilde{\lambda}_j = \Tilde{\mu}_j \Bar{\tau}^{\frac{1}{3}} - \frac1{3\Tilde{\mu}_j} \Bar{\tau}^{-\frac{1}{3}}+\bigO(\tau^{-\frac{2}{3}}), \;\;   |\tau|\gg 1, 
\end{equation}
where $\mu_j$ and $\tilde \mu_j$ are defined as (see Figure \ref{fig:mu}. )
\begin{equation*}
\mu_j = e^{-\frac{\ii\pi}{6}-\frac{2\ii j\pi}{3}} \textrm{ and }
\Tilde{\mu}_j = e^{\frac{\ii\pi}{6}+\frac{2\ii j\pi}{3}}.
\end{equation*}
 Here $\tau^{\frac{1}{3}}$ denotes the cube root of $\tau$ with the real part positive.  
\end{lemma}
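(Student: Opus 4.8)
The plan is to convert each cubic into a regular perturbation problem by a rescaling. For the first equation $\lambda^3 + \lambda + \ii\tau = 0$ I would set $\lambda = \tau^{1/3}\zeta$, with $\tau^{1/3}$ the branch having positive real part, which is well defined and holomorphic on a sufficiently narrow conic neighborhood of $\R_+$ away from the origin. Dividing by $\tau$ gives $\zeta^3 + \tau^{-2/3}\zeta + \ii = 0$, so with $w := \tau^{-2/3}$ (again holomorphic and small) the roots $\zeta$ are exactly the roots of the polynomial $F(\zeta,w) = \zeta^3 + w\zeta + \ii$. At $w = 0$ this has the three \emph{simple} roots $\mu_j = e^{-\ii\pi/6 - 2\ii j\pi/3}$, $j = 1,2,3$, namely the cube roots of $-\ii$; since $\partial_\zeta F(\mu_j,0) = 3\mu_j^2 \neq 0$, the holomorphic implicit function theorem yields, for $|w|$ small, three distinct roots $\zeta_j(w)$ with $\zeta_j(0) = \mu_j$, and these account for all three roots of the cubic. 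Differentiating $F(\zeta_j(w),w)\equiv 0$ at $w=0$ gives $\zeta_j'(0) = -1/(3\mu_j)$, whence $\zeta_j(w) = \mu_j - \frac{1}{3\mu_j}w + \bigO(w^2)$. Undoing the scaling, $\lambda_j = \tau^{1/3}\zeta_j = \mu_j\tau^{1/3} - \frac{1}{3\mu_j}\tau^{-1/3} + \bigO(\tau^{-1})$, which is the stated expansion (in fact with a slightly better remainder than claimed).

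Next I would pin down the labeling. A direct computation gives $\Re(\mu_1) = -\tfrac{\sqrt3}{2} < \Re(\mu_2) = 0 < \Re(\mu_3) = \tfrac{\sqrt3}{2}$; since for $\tau$ in a narrow enough conic neighborhood of $\R_+$ the argument of $\tau^{1/3}$ is close to $0$, the three numbers $\Re(\mu_j\tau^{1/3})$ remain distinct and separated by gaps of order $|\tau|^{1/3}$, while the correction is only $\bigO(|\tau|^{-1/3})$. Hence for $|\tau|\gg 1$ the convention $\Re(\lambda_1) < \Re(\lambda_2) < \Re(\lambda_3)$ agrees with the indexing by the $\mu_j$.

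For the $\Tilde{\lambda}_j$ I would run the identical argument with $\sigma := \Bar{\tau}$ as the independent variable in $\Tilde{\lambda}^3 + \Tilde{\lambda} - \ii(\sigma - p) = 0$ (note $\sigma$ again ranges over a conic neighborhood of $\R_+$, with $|\sigma| = |\tau|$). Writing $s := \sigma^{-1/3}$ and $\Tilde{\lambda} = s^{-1}\eta$ and multiplying through by $s^3$ turns the equation into the polynomial relation $\eta^3 + s^2\eta - \ii + \ii p\, s^3 = 0$; at $s = 0$ the limiting roots are the simple cube roots $\Tilde{\mu}_j = e^{\ii\pi/6 + 2\ii j\pi/3}$ of $\ii$. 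The implicit function theorem and one differentiation give $\eta_j(s) = \Tilde{\mu}_j - \frac{1}{3\Tilde{\mu}_j}s^2 + \bigO(s^3)$, with the $\bigO(s^3)$ term carrying the dependence on $p$; undoing the scaling yields $\Tilde{\lambda}_j = \Tilde{\mu}_j\Bar{\tau}^{1/3} - \frac{1}{3\Tilde{\mu}_j}\Bar{\tau}^{-1/3} + \bigO(\tau^{-2/3})$, and the ordering is fixed exactly as above from $\Re(\Tilde{\mu}_1) = -\tfrac{\sqrt3}{2} < \Re(\Tilde{\mu}_2) = 0 < \Re(\Tilde{\mu}_3) = \tfrac{\sqrt3}{2}$.

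The argument is essentially routine perturbation theory, and I do not expect a genuine obstacle; the points needing care are all bookkeeping — choosing the cube-root branches so that every quantity is (anti)holomorphic on the conic neighborhood, checking that the branches $\zeta_j(w)$ (resp.\ $\eta_j(s)$) exhaust all roots of the cubic, and verifying the leading-order real parts are distinct so that the labeling convention matches the $\mu_j$ (resp.\ $\Tilde{\mu}_j$) indexing for all large $|\tau|$. The one subtlety worth flagging is that the $p$-term enters the $\Tilde{\lambda}_j$ expansion already at order $\tau^{-2/3}$, which is exactly why the remainder there is stated as $\bigO(\tau^{-2/3})$ rather than $\bigO(\tau^{-1})$.
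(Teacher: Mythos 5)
Your proof is correct. The paper itself supplies no proof of this lemma---it simply cites \cite[Lemma 3.3]{CKN}---so there is nothing internal to compare against, and the rescaling-plus-holomorphic-implicit-function-theorem route you take is the standard one for this kind of Puiseux expansion of roots of a polynomial in a large parameter. The bookkeeping is all right: $\mu_j^3 = -\ii$ and $\tilde\mu_j^3 = \ii$ are the correct limiting roots, $\zeta_j'(0) = -\partial_w F/\partial_\zeta F = -1/(3\mu_j)$ is correct, and the real parts $-\tfrac{\sqrt3}{2},\,0,\,\tfrac{\sqrt3}{2}$ force the stated labeling once one notes the separation is $\bigO(|\tau|^{1/3})$ while the perturbation is $\bigO(|\tau|^{-1/3})$. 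Your remark at the end is also exactly right and worth emphasizing: for $\lambda_j$ the substitution $s = \tau^{-1/3}$ turns $\lambda^3+\lambda+\ii\tau=0$ into $\eta^3 + s^2\eta + \ii = 0$, which is even in $s$, so $\eta_j$ is an even power series in $s$ and the true remainder is $\bigO(\tau^{-1})$; for $\tilde\lambda_j$ the term $\ii p\, s^3$ breaks this parity, which is precisely why the $\bigO(\tau^{-2/3})$ stated in the lemma cannot in general be improved there, and why the authors state the weaker (uniform) $\bigO(\tau^{-2/3})$ bound in both cases.
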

\begin{figure}[h] 
    \centering 
    \includegraphics[width=0.4\textwidth]{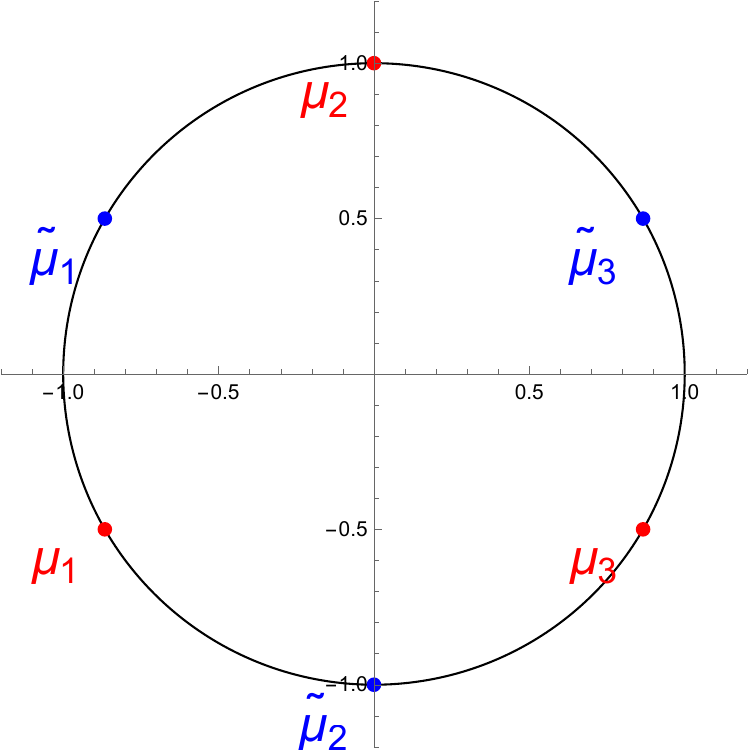}
\caption{Distribution of $\mu_j$ and $\Tilde{\mu}_j$}
    \label{fig:mu} 
\end{figure}
Next lemma introduces two special functions, which can be found in \cite[Page 1214]{CKN}.
\begin{lemma}\label{lem: defi-G-H}
Let $H(\tau):=\frac{\det Q(\tau)}{\Xi(\tau)}$ and $G(\tau):=\frac{P(\tau)}{\Xi(\tau)}$ ($P$,  $Q$, and $\Xi$ are defined in \eqref{eq: defi-P-Q} and \eqref{eq: defi-Xi}). Then $G$ and $H$ are entire functions. Moreover, let $\tau_1,\cdots,\tau_k$ be the distinct common roots of $G$ and $H$ in $\C$ and
\begin{equation*}
\Gamma(\tau):=\prod_{j=1}^k(\tau-\tau_j).
\end{equation*}
Then the following two functions are entire
\begin{equation*}
\mathcal{G}(\tau):=\frac{G(\tau)}{\Gamma(\tau)},\mathcal{H}(\tau):=\frac{H(\tau)}{\Gamma(\tau)}
\end{equation*}
and $\mathcal{G},\mathcal{H}$ have no common roots.
\end{lemma}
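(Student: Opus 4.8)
The plan is to establish the two assertions in turn: first that $G$ and $H$, a priori defined only where the three roots $\lambda_1,\lambda_2,\lambda_3$ of \eqref{eq: lambda-eigenvalue-tau} are distinct and single‑valued, extend to entire functions of $\tau$; and then that they have only finitely many common zeros, from which the coprimality of the quotients is formal.

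For the first point, the structural fact I would exploit is that $P$, $\det Q$ and $\Xi$ each change sign under any transposition of the branches $(\lambda_1,\lambda_2,\lambda_3)$; direct inspection of the formulae \eqref{eq: defi-P-Q}--\eqref{eq: defi-Xi} confirms this, and indeed $\Xi$ coincides up to a sign with the Vandermonde determinant of $(\lambda_1,\lambda_2,\lambda_3)$ while $P$ and $\det Q$ are (up to a sign) $3\times 3$ determinants whose columns are indexed by $j$ and carry only the data $\lambda_j$, $e^{\lambda_j L}$, $\lambda_j e^{\lambda_j L}$. Hence the ratios $G=P/\Xi$ and $H=\det Q/\Xi$ are invariant under the monodromy group $S_3$ of \eqref{eq: lambda-eigenvalue-tau}, so they descend to single‑valued holomorphic functions on $\C$ minus the two branch points $\tau_\ast$, i.e.\ the zeros of the discriminant $\Xi^2=27\tau^2-4$. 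To get holomorphy at $\tau_\ast$ as well I would argue locally: there exactly two roots collide, the local monodromy is the transposition exchanging them, and $P$, $\det Q$, $\Xi$ are all odd under it, so in the local coordinate $s=(\tau-\tau_\ast)^{1/2}$ numerator and denominator of each ratio vanish to the same odd order and the quotient is an even, hence single‑valued and bounded, function near $\tau_\ast$; Riemann's removable singularity theorem then gives the extension. (Equivalently, an antisymmetric entire function on $\C^3$ equals the Vandermonde times a symmetric entire function, so $G,H$ become symmetric entire functions of $(\lambda_1,\lambda_2,\lambda_3)$, hence entire functions of the elementary symmetric polynomials, which along the root curve equal $0,\,1,\,-\ii\tau$.) Finally, $G,H\not\equiv0$ follows from Lemma \ref{lem: lambda-asymp}: as $\tau\to+\infty$ the real parts of $\lambda_1,\lambda_2,\lambda_3$ are asymptotically $-\tfrac{\sqrt3}{2}\tau^{1/3}$, $o(\tau^{1/3})$, $+\tfrac{\sqrt3}{2}\tau^{1/3}$, so a single exponential dominates in each of $P$ and $\det Q$ and they are unbounded, while $\Xi$ grows only polynomially in the $\lambda_j$.

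For the second point, the key observation is that a common zero $\tau_0$ of $G$ and $H$ with $\Xi(\tau_0)\neq0$ produces a \emph{Type 1} eigenfunction. Indeed, by Lemma \ref{lem: formulation sol} and Cramer's rule, $\det Q(\tau_0)=0$ means that the $3\times3$ matrix encoding the boundary conditions $y(0)=y(L)=y'(L)=0$ in the basis $\{e^{\lambda_j(\tau_0)x}\}$ is singular, so there is $y=\sum_j c_j e^{\lambda_j(\tau_0)x}\not\equiv0$ with $y'''+y'+\ii\tau_0 y=0$ and $y(0)=y(L)=y'(L)=0$; adjoining $P(\tau_0)=0$ forces the bordered $4\times3$ matrix obtained by appending the row $(\lambda_1,\lambda_2,\lambda_3)$ to have rank at most $2$, so this $y$ also satisfies $y'(0)=0$. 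Thus $\ii\tau_0$ is a Type 1 eigenvalue of $\A_L$, and by Rosier \cite{Rosier97} (cf.\ Proposition \ref{prop: type-1-2}) such eigenvalues exist only for $L\in\mathcal N$ and are then among the finitely many values $\lambda_m(k_m,l_m)$ attached to the finitely many pairs $(k_m,l_m)$ representing $L$ via \eqref{eq:L=kl}. Together with the at most two points where $\Xi$ vanishes, this shows that the common zero set $\{\tau_1,\dots,\tau_k\}$ is finite. Writing $m_j:=\min\{\mathrm{ord}_{\tau_j}G,\ \mathrm{ord}_{\tau_j}H\}\ge1$ and $\Gamma(\tau):=\prod_{j=1}^k(\tau-\tau_j)^{m_j}$ (which reduces to the product in the statement as soon as each $\tau_j$ is a simple zero of at least one of $G$, $H$), the functions $\mathcal G=G/\Gamma$ and $\mathcal H=H/\Gamma$ are entire, and they have no common zero: at each $\tau_j$ one of them has vanishing order $m_j-m_j=0$, and any other common zero would be a common zero of $G$ and $H$ outside $\{\tau_1,\dots,\tau_k\}$, contradicting the definition of that set.

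The step I expect to be the main obstacle is the finiteness of the common zero set: the reduction $\det Q(\tau_0)=P(\tau_0)=0\Rightarrow$ Type 1 eigenfunction has to be carried out precisely where the representation formula of Lemma \ref{lem: formulation sol} degenerates (the zeros of $\det Q$), and the two discriminant points must be tracked separately; granting finiteness, the entire extension and the coprimality reduce to routine bookkeeping with orders of vanishing and removable singularities.
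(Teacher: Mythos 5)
Your strategy---$S_3$-antisymmetry of $P$, $\det Q$, $\Xi$ plus the identification of $\Xi$ with the Vandermonde for the entire extension, Lemma~\ref{lem: lambda-asymp} for $G,H\not\equiv 0$, and the production of Type~1 eigenfunctions for finiteness of the common zero set---is exactly what is needed and almost certainly parallels \cite[Page 1214]{CKN}, which the paper cites in lieu of giving its own proof.

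The one step stated too loosely is the claim that $\det Q(\tau_0)=P(\tau_0)=0$ ``forces the bordered $4\times3$ matrix to have rank at most $2$.'' Vanishing of two of the four $3\times3$ minors does not in general imply rank~$\le2$, and the case your argument skips over---the rows $(1,1,1)$ and $(e^{\lambda_1 L},e^{\lambda_2 L},e^{\lambda_3 L})$ proportional, i.e.\ $e^{\lambda_1 L}=e^{\lambda_2 L}=e^{\lambda_3 L}$---is not a marginal degeneration: by Proposition~\ref{prop: type-1-2} and the explicit $\eta_j\in\frac{2\pi\ii}{L}\Z$ in \eqref{eq: eta-eigenvalue-eq}, this is precisely the configuration realised at the Type~1 eigenvalues you are trying to detect. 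The repair is short but must be made: $\Xi(\tau_0)\neq0$ gives distinct $\lambda_j$, which forces $\mathrm{rank}\,\mathcal Q\ge2$, hence $=2$ once $\det Q(\tau_0)=0$, so the kernel of $\mathcal Q$ is one-dimensional. If the two rows above are independent, that kernel is spanned by their cross product $c=(e^{\lambda_{j+1}L}-e^{\lambda_{j+2}L})_j$, for which $\sum_j c_j\lambda_j=\pm P(\tau_0)=0$; if they are proportional, any kernel vector $c$ already satisfies $\sum_j c_j=0$ and $\sum_j c_j\lambda_j e^{\lambda_j L}=0$, hence $\sum_j c_j\lambda_j=0$ (and $P(\tau_0)=0$ is automatic). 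Either way $y=\sum_j c_j e^{\lambda_j x}$ is a nontrivial Type~1 eigenfunction, and Rosier's finiteness concludes. Separately, you rightly observe that defining $\Gamma$ with simple linear factors, as the statement does, yields coprime $\mathcal G,\mathcal H$ only if at each common zero at least one of $G,H$ vanishes to order one, and that replacing the exponent by $m_j=\min\{\mathrm{ord}_{\tau_j}G,\mathrm{ord}_{\tau_j}H\}$ is the robust version; since the paper only cites \cite{CKN} and does not address this, it is a fair caveat rather than a flaw in your write-up.
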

As we observed in Lemma \ref{lem: formulation sol}, $G$ and $H$ appear in the formula of the solution $\Hat{y}$. This is a direct consequence of the structure of KdV equations. We need all these special functions to be entire functions because we want to apply the Paley-Wiener Theorem. Later in Section \ref{sec: quantitative estimates}, we will present how to use these entire functions to bound the quantity $Q_M$.

\section{A trapping direction  }\label{sec: An unreachable direction}
As presented in Section \ref{sec: strategy of the proof}, this section is devoted to constructing a trapping direction $\Psi(t,x)$ for the KdV system \eqref{intro-sys-KdV} for $L\in\mathcal{N}$. This trapping direction $\Psi$ satisfies the following equations:
\begin{equation}\label{eq: defi-trapping direction}
\left\{
\begin{aligned}
\p_t\Psi (t,x) + \p_x^3\Psi (t,x) + \p_x\Psi (t,x) = 0, \mbox{ in } \R_+ \times (0, L),\\
\Psi(t, 0) = \Psi(t, L) = \p_x\Psi(t, 0) = \p_x\Psi(t, L) = 0, \mbox{ in } \R_+.   
\end{aligned}
\right.
\end{equation}
Furthermore, we prove a coercive property for this direction $\Psi$ as follows
\begin{proposition}\label{prop: coercive property} 
There exist $\Psi(t,x)$ and $T_*>0$ such that, for any  $u \in L^2(0, + \infty)$ with $u(t) = 0$ for $t > T_*$ and $y(t,\cdot) = 0$ for $t>T_*$ where $y$ is the unique solution of the linearized KdV system \eqref{eq: linearized controled kdv-0-0}, 
we have
\begin{equation}\label{eq: coercive inequality}
\int_{0}^\infty \int_0^{+\infty} |y|^2(t, x) \p_x\Psi(t, x) dx dt  \ge C \| u
\|_{H^{-1}(\R)}^2. 
\end{equation}    
\end{proposition}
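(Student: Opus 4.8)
The plan is to follow the reduction philosophy of Coron--Koenig--Nguyen \cite{CKN}, adapted to the degenerate case $L \in \mathcal{N}^3$ where Type 2 eigenmodes are present. The core object is the quantity $Q_M(\varphi; y)$ from \eqref{eq: defi-Q_M-intro} applied to a suitable element $\varphi$ of the unreachable subspace $M$; the trapping direction $\Psi$ will be built as a time-periodic combination $\Psi(t,x) = \Re\big(e^{\ii p t} \varphi(x)\big)$ (or a finite sum of such terms, one per unreachable pair $(k,l)$), so that $\p_x\Psi$ evaluated against $|y|^2$ reproduces $\Re(e^{-\ii pt}\varphi'(x))$-type integrals. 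The coercive inequality \eqref{eq: coercive inequality} is then equivalent to a lower bound of the form $\Re\, Q_M(\varphi; y) \gtrsim \|u\|_{H^{-1}(\R)}^2$ for all controls $u$ that steer $0$ to $0$ on $(0, T_*)$.

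\textbf{Key steps.} First I would use Lemma \ref{lem: formulation sol} to express $\hat y(\tau, x)$ explicitly in terms of $\hat u(\tau)$ via the functions $G, H$ (equivalently $\mathcal{G}, \mathcal{H}$ from Lemma \ref{lem: defi-G-H}), and rewrite the bilinear integral $\int_0^\infty\int_0^L |y|^2 e^{-\ii pt}\varphi'\,dx\,dt$ using Plancherel in $t$ as a contour integral in $\tau$ against $\hat u(\tau)\overline{\hat u(\tau - p)}$ (or $\hat u(\tau)\hat u(p-\bar\tau)$, following the pairing that arises from the conjugate solutions $\tilde\lambda_j$). Second, I would perform an asymptotic analysis of the integrand as $|\tau| \to \infty$ using Lemma \ref{lem: lambda-asymp} for the roots $\lambda_j, \tilde\lambda_j$: this is the step where the novel classification enters, since for $(k,l) \in \mathcal{S}_2$ the leading coefficient $E$ of \cite{CKN} vanishes, so one must expand to the next order and extract a nonzero leading constant $E' \neq 0$ at the order $\tau^{-2/3}$ (or lower), using the explicit eigenfunction formulas \eqref{eq: exact formula for critical eigenfunctions}--\eqref{eq: defi of tilde-G} and the algebraic fact $2k+l \in 3\N^*$. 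Third, I would choose $\varphi \in M$ (a specific real combination of $\Re\G_m, \Im\G_m$) so that this leading constant is strictly positive; the compactly-supported-function embedding Lemma \ref{lem: embedding-compact-support}, the commutator estimate Corollary \ref{cor: commutator-est}, and Peetre's inequality Proposition \ref{prop: peetre} are then used to absorb the remainder terms and the low-frequency contribution, yielding $\Re\, Q_M(\varphi; y) = \|u\|_{H^{-1}(\R)}^2\big(E' + \bigO(T_*)\big)$ for $u$ supported in $(0, T_*)$. Taking $T_*$ small enough makes the bracket positive, which is \eqref{eq: coercive inequality}.

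\textbf{Main obstacle.} The hard part will be the higher-order asymptotic expansion in the degenerate case: once the $\bigO(1)$ term of \cite{CKN} is known to vanish, extracting the true leading term requires keeping track of the $\tau^{-1/3}$ corrections in $\lambda_j$ and $\tilde\lambda_j$ from Lemma \ref{lem: lambda-asymp}, their effect on the exponentials $e^{\lambda_j L}$, and the interplay with the specific structure of $\G_m$ versus $\tilde\G_m$ — i.e.\ showing that the contribution of Type 2 eigenmodes does not cancel the recovered leading term. A secondary difficulty is that the remainder estimates must be quantitative and uniform: because the leading term now sits at a lower order in $\tau$, the spectral-gap and Paley--Wiener arguments must be sharp enough that the error terms (including the contribution of the common roots encoded in $\Gamma(\tau)$, and the boundary/low-frequency terms handled via Lemma \ref{lem: embedding-compact-support} and Corollary \ref{cor: commutator-est}) are genuinely lower order, so that the $\bigO(T_*)$ smallness survives. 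I expect essentially all the technical weight of the paper to be concentrated in verifying these two points, which is presumably why the construction of $\Psi$ and the asymptotics of $Q_M$ are split into the separate Sections \ref{sec: construction trap direction}, \ref{sec: Asymptotic analysis of B}, and \ref{sec: quantitative estimates}.
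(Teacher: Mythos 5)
Your proposal is correct and follows essentially the same route as the paper: express $Q_M$ via the Fourier-side formula $\int_\R \hat u(\tau)\overline{\hat u(\tau-p)}\int_0^L B(\tau,x)\,dx\,d\tau$, push the asymptotics one order further to recover a nonzero leading coefficient $E$ in the degenerate case (the paper finds $\int_0^L B\,dx = E|\tau|^{-2} + \mathcal{O}(|\tau|^{-7/3})$, hence the $H^{-1}(\R)$ norm), control remainders via the compact-support embedding Lemma~\ref{lem: embedding-compact-support}, the commutator bound Corollary~\ref{cor: commutator-est}, and Peetre's inequality, and then shrink $T_*$. The only detail you leave implicit is how positivity of the leading term is actually secured: rather than merely choosing $\varphi$, the paper takes $\Psi(t,x)=\Re\big(\overline{E}\,\varphi(x)e^{-\ii pt}\big)$, so that multiplying the expansion of Proposition~\ref{prop: coercive-1/3norm} by $\overline{E}$ and taking the real part produces the manifestly positive constant $|E|^2$, which Lemma~\ref{lem: E-not-0} shows is nonzero precisely when $(k,l)\in\mathcal{S}_2$.
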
 
Here we used the following definition for the negative Sobolev norm of $u\in L^2(\R_+)$:
\begin{equation}\label{eq: defi-negative-norm}
\|u\|^2_{H^{s}(\R)}:=\int_{\R}|\Hat{u}(\tau)|^2(1+\tau^2)^s\diff\tau, \mbox{ for }s<0,
\end{equation}
where $\Hat{u}$ is the Fourier transform of the extension of $u$ by $0$ for $t<0$.
\subsection{Idea of the proof}\label{sec: idea of proof-trapping direction}
As previously discussed in Section \ref{sec: strategy of the proof}, our proof strategy combines three key elements: 
\begin{itemize}
    \item The reduction approach, detailed further in Section \ref{sec: reduction approach};  
    \item Applying our new classification of unreachable pairs $(k,l)$ (defined in Definition \ref{defi: types of unreachable pairs}), a key observation is that the remaining case under our new classification is degenerate. In Section \ref{sec: degenerate case}, we address the influence and difficulties of a degenerate case. 
    \item A higher order expansion scheme is introduced to establish a coercive estimate \eqref{eq: coercive inequality}. We refer to the details in Section \ref{sec: higher-order-expansion}. 
\end{itemize}
As an application of the key concepts discussed above, we outline a three-step proof structure in Section \ref{sec: outline of section}.

\subsubsection{Reduction approach}\label{sec: reduction approach} 
We need to point out that this approach characterizes all possible control $u$ which steers $0$ to $0$ at time $T$ for the linearized KdV system \eqref{eq: linearized controled kdv-0-0}, which is based on taking Fourier transform with respect to the time variable $t$ of the solution $y$ to \eqref{eq: linearized controled kdv-0-0}.

As is shown in the classical power series expansion method, we need to deal with a multiple of the  $L^2(0, L)$-projection of the solution $y(T, \cdot)$ into $M$, which is governed by the following quantity
\begin{equation}\label{eq: quantity-p}
Q_M(\varphi):=\int_{0}^{\infty} \int_0^L |y (t, x)|^2 e^{-\ii  pt }\varphi'(x) dx dt,  
\end{equation}
where $\varphi$ is a direction contained in $M$. This quantity is related to the quadratic order system in the power series expansion of the nonlinear KdV equation, which plays a central role in the controllability of the nonlinear KdV equations.  \\
Roughly speaking, if $Q_M=0$ for any $\varphi\in M$, we shall expect the small-time controllability holds true (see \cite{CC04} for example), while if on the contrary, $Q_M\neq0$ for some $\varphi_0\in M$, we expect this leads to a failure of the small-time controllability. Moreover, we expect to construct the obstruction to the small-time controllability based on this particular $\varphi_0$. 

As is observed in \cite{CKN}, once you see this $Q_M$ as a Fourier transform with respect to the time variable $t$, i.e., 
\[
Q_M=\int_0^L\mathcal{F}_{t\to p}(|y|^2(\cdot,x)\varphi'(x))\diff x.
\]
Combined with a direct computation, the description of both high-frequency and low-frequency parts will provide you with insights about the relationship between $Q_M$ and the regularity level of $u$, which is governed by a certain Sobolev norm of $u$. 

At last, we finish this reduction approach by introducing the following quantity $B$:
\begin{equation}\label{eq: def-B}
B(\tau, x) = \frac{\sum_{j=1}^3(e^{\lambda_{j+1} L } - e^{\lambda_{j} L })e^{\lambda_{j+2} x} }{\sum_{j=1}^3 (\lambda_{j+1} - \lambda_{j}) e^{-\lambda_{j+2} L }} \cdot  \frac{\sum_{j=1}^3(e^{\Tilde{\lambda}_{j+1} L } - e^{\Tilde{\lambda}_{j} L })e^{\Tilde{\lambda}_{j+2} x} }{\sum_{j=1}^3 (\Tilde{\lambda}_{j+1} - \Tilde{\lambda}_{j}) e^{-\Tilde{\lambda}_{j+2} L }}  \cdot \varphi'(x).
\end{equation}
By simple computation
\begin{align*}
\int_0^L \int_{0}^{\infty} |y(t, x)|^2 \varphi'(x) e^{ - i p t} \diff t \diff x = &
   \int_0^L
\varphi'(x)  \hat y * \widehat{ \bar{y}} (p, x) \diff x \\
= &   \int_0^L  \varphi'(x)\int_{\R}\hat y (\tau, x)  \overline{\hat y} (\tau -   p, x) \diff
\tau  \diff x.
\end{align*}
Using the formula \eqref{eq: defi-Fourier-y}
\[
\hat y(\tau, x) = \hat u(\tau) \frac{\sum_{j=1}^3(e^{\lambda_{j+1} L } - e^{\lambda_{j} L })e^{\lambda_{j+2} x} }{\sum_{j=1}^3 (\lambda_{j+1} - \lambda_{j}) e^{-\lambda_{j+2} L }},
\]
we obtain
\begin{equation*}
Q_M=\int_0^L \int_{0}^{\infty} |y(t, x)|^2 \varphi_x(x) e^{- i p t} \diff t \diff x  =
\int_{\R}  \Hat{u}(\tau) \overline{\Hat{u}(\tau - p )} \int_0^L B(\tau, x) \diff x \diff \tau.
\end{equation*}
Now the high-frequency analysis of $Q_M$ is reduced to the asymptotic analysis of $\int_0^LB(\tau,x)\diff x$ as $\tau\to\infty$. In summary, we conclude this reduction approach into three steps: (1)derive the asymptotic analysis of $\int_0^LB(\tau,x)\diff x$ as $\tau\to\infty$; (2)Based on this asymptotic expansion, prove a coercive property for $Q_M$; (3)Construct the trapping direction using (1) and (2).

\subsubsection{The remaining case under the new classification is degenerate.}\label{sec: degenerate case}
 Revisiting the papers on the positive result \cite{CC04} and the negative result \cite{CKN} of small-time controllability, we noticed that these two cases coincide with types $\mathcal{S}_1,\mathcal{S}_3$ and the remained degenerate case is of type $\mathcal{S}_2$. Another observation shows that $\mathcal{S}_2$ has similarities with both $\mathcal{S}_1$ and $\mathcal{S}_3$: the corresponding eigenvalues are pure imaginary but double. This new perspective motivates us to consider this degenerate case $\mathcal{S}_2$ has a negative result of small-time controllability but with a non-vanishing order much higher than the case $\mathcal{S}_1$. 
 
We point out that a crucial feature of $\mathcal{S}_2$ is the appearance of Type 2 eigenfunctions. Let $\varphi$ be a Type 1 eigenfucntion. Then, $\varphi'$ will become a Type 2 eigenfunction automatically if $2k+l\in3\N^*$.  
More precisely, we notice that $Q_M(\varphi)$ can be seen as a projection on this Type 2 eigenfunction $\varphi'$, which implies that we need to detect a non-vanishing term at higher orders than the non-degenerate case $2k+l\notin3\N^*$, i.e. $\mathcal{S}_3$. The technical difficulties will be specified in Section \ref{sec: higher-order-expansion}.

Even though the authors in \cite{CKN} have established this reduction approach, they cannot deal with all critical lengths. They ended up with cases where $L=2\pi\sqrt{\frac{k^2+kl+l^2}{3}}$ and $2k+l\notin 3\N^*$. For other situations where $2k+l\in 3\N^*$, their asymptotic expansion for $\int_0^LB(\tau,x)\diff x$, i.e. 
\[
\int_0^LB(\tau,x)\diff x=\frac{\Tilde{E}}{|\tau|^{\frac{4}{3}}}+\bigO(|\tau|^{-\frac{5}{3}})
\]
fails because $\Tilde{E}=0$ under the assumption $2k+l\in3\N^*$. Thus, we see $2k+l\in3\N^*$ as a degenerate case.

In Section \ref{sec: critical length}, we introduced a new perspective on classifying critical lengths by focusing on unreachable pairs $(k,l)$.  It's important to note that the nature of the unreachable space $M$ is heavily influenced by the characteristics of these pairs. A priori, it is essential to categorize the features of KdV systems into three distinct types, although there are instances where these types exhibit similar behaviors. Let's take an example, for $\int_0^LB(\tau,x)\diff x$,
\begin{equation*}
\int_0^LB(\tau,x)\diff x=\left\{
\begin{array}{ll}
    0,&(k,l)\in \mathcal{S}_1, \mbox{ in \cite{CC04}}  \\
    \frac{E}{|\tau|^{2}}+\bigO(|\tau|^{-\frac{7}{3}}), & (k,l)\in \mathcal{S}_2,\mbox{ see Section \ref{sec: Asymptotic analysis of B}},\\
   \frac{\Tilde{E}}{|\tau|^{\frac{4}{3}}}+\bigO(|\tau|^{-\frac{5}{3}}) & (k,l)\in \mathcal{S}_3, \mbox{ in \cite{CKN}}.
\end{array}
\right.
\end{equation*}
while for the dimension of $M$,
\begin{equation*}
\dim M\left\{
\begin{array}{ll}
    \mbox{is odd},&(k,l)\in \mathcal{S}_1, \\
 \mbox{is even},&(k,l)\in \mathcal{S}_2\cup\mathcal{S}_3.
\end{array}
\right.
\end{equation*}
\subsubsection{Higher-order expansion}\label{sec: higher-order-expansion}
As we explained in Section \ref{sec: reduction approach}, we concentrate on the analysis of the quantity $\int_0^LB(\tau,x)\diff x$ and $Q_M$. Precisely, we encounter two difficulties:
\begin{enumerate}
    \item When considering the asymptotic behaviors of $\int_0^LB(\tau,x)\diff x$, as we've previously discussed, due to the appearance of Type 2 eigenfunctions, the case $\mathcal{S}_2$ is degenerate and requires us to perform a more delicate analysis to detect the non-vanishing term at higher orders.
    \item As for the coercive property of $Q_M$, due to the higher non-vanishing order of $\int_0^LB(\tau,x)\diff x$, the regularity level is significantly lower than that in the case studied in \cite{CKN}. This analysis involves the non-local pseudodifferential operator $(1 + |D_t|^2)^{-\frac{1}{6}}$. Consequently, direct application of the Plancherel Theorem or direct use of compact support is insufficient. Instead, advanced techniques in microlocal analysis and specific lemmas concerning Sobolev norms on compactly supported functions are required.
\end{enumerate}

\subsubsection{Outline of the section}\label{sec: outline of section}
Armed with the primary ideas of proof discussed above, we organize our proof in the following three steps: 
\begin{itemize}
    \item \textbf{Step 1: }We first establish a new version of asymptotic expansion for $\int_0^LB(\tau,x)\diff x$ in Lemma \ref{lem: B asymp} of Section \ref{sec: Asymptotic analysis of B}:
    \[
     \int_0^L B(\tau, x) dx  =
\frac{E}{|\tau|^{2}}    + \bigO(|\tau|^{- \frac{7}{3}}) \mbox{ for $\tau\in \R$ with large $|\tau|$ and $E\neq0$.}
    \]
    \item \textbf{Step 2: }With the help of this asymptotic expansion, we derive a quantitative coercive estimate of $Q_M$ in Proposition \ref{prop: coercive-1/3norm} of Section \ref{sec: quantitative estimates}:
    \[
    \int_{0}^{\infty} \int_0^L |y (t, x)|^2 e^{-\ii pt }\varphi_x(x) dx dt\geq C\|u\|^2_{H^{-1}(\R)}.
    \]
    \item \textbf{Step 3: }Finally, equipped with good estimates on $Q_M$ and $B$, we construct the trapping direction $\Psi$ in the classical way, see details in Section \ref{sec: construction trap direction}.
\end{itemize}

\subsection{Asymptotic analysis of $\int_0^LB(\tau,x)\diff x$}\label{sec: Asymptotic analysis of B}
We study the high-frequency asymptotics in the Fourier side (Lemma \ref{lem: B asymp}) and then back to the physical space estimates (Proposition \ref{prop: coercive-1/3norm}). After establishing these estimates, we are able to obtain a ``well-prepared" direction $\Psi$ associated with its estimates, which is crucial in the proof of our main theorem.\\

Let  $u \in L^2(0,  + \infty)$ and denote $y$ the corresponding solution
of the linear KdV equation~\eqref{eq: linearized controled kdv-0-0}. We assume that $y_0=0$ and that $y$ satisfies $y(t, \cdot) = 0$ in $(0, L)$ for $t \geq T$. We have, by Lemma \ref{lem: formulation sol}, the formulation of $y$ as follows:

Let $ \lambda_j(\tau)$ for $j=1, \, 2, \, 3$  be the three solutions of
\begin{equation*}
\lambda^3 + \lambda = -\ii \tau \text{ for } \tau \in \C.
\end{equation*}
Let $\eta_1, \, \eta_2, \,  \eta_3 \in i \R$. Define
\begin{equation}\label{eq: defi varphi}
\varphi(x) = \sum_{j=1}^3 (\eta_{j+1} - \eta_{j}) e^{\eta_{j+2} x} \mbox{ for } x \in [0, L].
\end{equation}
 It is easy to check that
\begin{equation*}
\varphi(0) = \sum_{j=1}^3 (\eta_{j+1} - \eta_{j}) =0=\varphi(L),\; \varphi'(0)= \sum_{j=1}^3\eta_{j+2} (\eta_{j+1} - \eta_{j}) =0=\varphi'(L)
\end{equation*}
In addition, we further assume that 
\begin{equation}\label{eq: e^eta=1}
e^{\eta_1  L} = e^{\eta_2 L} = e^{\eta_3 L}=1,
\end{equation}
which is equivalent to $\eta_1, \eta_2, \eta_3 \in  \frac{2 \pi i}{L} \Z$. The definition of $\varphi$ in \eqref{eq: defi varphi} ensures that $\varphi$ satisfies the boundary condition of Type 1 eigenfunctions defined in \eqref{eq: exact formula for critical eigenfunctions} (See also \cite{Cerpa07, CC09}). In particular, if $\eta_j^3+\eta_j+i\lambda=0$, with $i\lambda$ is an eigenvalue defined in \eqref{eq: eigenvalue problem with critical length}, then $\varphi\in M$ is an uncontrollable direction. 

\medskip

We are ready to establish the behavior of
\[
\int_0^L B(\tau, x) dx
\]
for $\tau \in \R$ with $|\tau|\gg1$, which is one of the main ingredients for the analysis in this section.

\begin{lemma} \label{lem: B asymp}  Let   $p \in \R$,  and let $\varphi$ be defined by \eqref{eq: defi varphi}.  Assume that $\eta_j \neq 0$ and moreover, $e^{\eta_j L}=1$ and $\eta_j^3+\eta_j+ip=0$, for $j=1, \,  2, \,  3$.   
We have
\begin{equation*}
 \int_0^L B(\tau, x) dx  =
\frac{E}{|\tau|^{2}}    + \bigO(|\tau|^{- \frac{7}{3}}) \mbox{ for $\tau\in \R$ with large $|\tau|$},
\end{equation*}
where $E$ is  defined by
\begin{equation}\label{eq: defi-E}
E=\frac{1}{9}p^2L\sum_{j=1}^3 \frac{\eta_{j+1}-\eta_{j}}
{  \eta_{j+2} }
\end{equation}
\end{lemma}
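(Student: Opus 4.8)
The plan is to compute the integral $\int_0^L B(\tau,x)\,dx$ by substituting the large-$\tau$ asymptotics of $\lambda_j(\tau)$ and $\tilde\lambda_j(\tau)$ from Lemma \ref{lem: lambda-asymp} into the explicit formula \eqref{eq: def-B} for $B(\tau,x)$, and then extracting the leading term order by order in $\tau^{-1/3}$. First I would recall that, since $\varphi$ is built from exponentials $e^{\eta_{j+2}x}$ with $\eta_j\in\frac{2\pi i}{L}\Z$ satisfying $e^{\eta_j L}=1$, the $x$-integral of each exponential monomial $e^{(\lambda_a+\tilde\lambda_b+\eta_c)x}$ over $(0,L)$ can be evaluated explicitly as $\frac{e^{(\lambda_a+\tilde\lambda_b+\eta_c)L}-1}{\lambda_a+\tilde\lambda_b+\eta_c}$. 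The denominators in $B$, namely $\sum_j(\lambda_{j+1}-\lambda_j)e^{-\lambda_{j+2}L}$ and its tilde-analogue, are (up to sign and the factor $\Xi$) the functions $\det Q(\tau)$; using Lemma \ref{lem: lambda-asymp} one gets their leading asymptotics, and in particular one sees that they are of size $\sim e^{cL\tau^{1/3}}$-type growth compensated against the numerator, so that the net object $\int_0^L B\,dx$ decays polynomially. The bookkeeping is to track which combination $\lambda_a+\tilde\lambda_b$ is closest to $-\eta_c\in i\R$ (this is where $\eta_j^3+\eta_j+ip=0$ enters: $\lambda_j(\tau)$ and $\tilde\lambda_j(\bar\tau-p)$ are then perturbations of the same $\eta_j$ shifted by $p$), so that the corresponding denominator $\lambda_a+\tilde\lambda_b+\eta_c$ is small — of size $\bigO(\tau^{-1/3})$ — and dominates the sum.

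The core of the computation is therefore the following. Writing $\tau>0$ large, Lemma \ref{lem: lambda-asymp} gives $\lambda_j=\mu_j\tau^{1/3}-\frac{1}{3\mu_j}\tau^{-1/3}+\bigO(\tau^{-2/3})$ and $\tilde\lambda_j=\tilde\mu_j\tau^{1/3}-\frac{1}{3\tilde\mu_j}\tau^{-1/3}+\bigO(\tau^{-2/3})$ with $\tilde\mu_j=-\mu_j$ (from the definitions $\mu_j=e^{-i\pi/6-2ij\pi/3}$, $\tilde\mu_j=e^{i\pi/6+2ij\pi/3}$, indeed $\tilde\mu_j=e^{i\pi/3}\mu_j\cdot(\pm1)$ — one checks the exact phase relation and the matching of the real-part orderings). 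Hence $\lambda_j+\tilde\lambda_{j'}$ has its $\tau^{1/3}$-term equal to $(\mu_j+\tilde\mu_{j'})\tau^{1/3}$, which vanishes precisely when $\tilde\mu_{j'}=-\mu_j$; for those paired indices $\lambda_j+\tilde\lambda_{j'}=-\frac{1}{3}(\frac{1}{\mu_j}+\frac{1}{\tilde\mu_{j'}})\tau^{-1/3}+\bigO(\tau^{-2/3})$. Comparing with the eigenvalue relation $\eta_j^3+\eta_j=-ip$ and $\lambda_j^3+\lambda_j=-i\tau$, one identifies $\lambda_j\to\eta_{\sigma(j)}$-type limits along appropriate sequences; but here, since we are doing a pure large-$\tau$ expansion (not a limit to a critical length), the cleaner route is to expand the whole ratio in $B$ directly. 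I would substitute everything, collect the $x$-integrated exponential factors, and expand the resulting rational expression in powers of $\tau^{-1/3}$. The claim is that all terms of order $|\tau|^{-4/3}$, $|\tau|^{-5/3}$ cancel (this cancellation is exactly the "degeneracy" emphasized in Section \ref{sec: degenerate case}, coming from $2k+l\in3\N^*$ via $e^{\eta_jL}=1$ and the special phase structure), and the first surviving term is $\frac{E}{|\tau|^2}$ with $E=\frac19 p^2 L\sum_{j=1}^3\frac{\eta_{j+1}-\eta_j}{\eta_{j+2}}$.

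To actually pin down the coefficient $E$, I would organize the expansion of $B(\tau,x)$ as $B=B_0(x)+\tau^{-1/3}B_1(x)+\tau^{-2/3}B_2(x)+\cdots$ where each $B_m$ is itself a finite sum of exponentials in $x$ (with coefficients depending on $\mu_j,\tilde\mu_j,\eta_j$), integrate termwise using $\int_0^L e^{\zeta x}dx=\frac{e^{\zeta L}-1}{\zeta}$, and use $e^{\eta_jL}=1$ together with $\eta_j^3+\eta_j=-ip$ to simplify. The factor $\varphi'(x)=\sum_j\eta_{j+2}(\eta_{j+1}-\eta_j)e^{\eta_{j+2}x}$ supplies the combinatorial weight $\sum_j\frac{\eta_{j+1}-\eta_j}{\eta_{j+2}}$ after the $x$-integration turns $e^{\eta_{j+2}x}$ into $\frac{1}{\eta_{j+2}}(\cdots)$; the $p^2$ appears because the two fractions in \eqref{eq: def-B} each contribute one factor of the small denominator $\sim\tau^{-1/3}$ that, after the $x$-integration against the $\eta$-oscillation and using $\eta_j^3+\eta_j=-ip$, produces a factor proportional to $p$, and the prefactor expansions contribute the remaining $\tau^{-4/3}$ so that $\tau^{-1/3}\cdot\tau^{-1/3}\cdot\tau^{-4/3}=\tau^{-2}$. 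Finally I would double-check the error term: every neglected contribution is $\bigO(\tau^{-7/3})$ because the next correction in Lemma \ref{lem: lambda-asymp} is $\bigO(\tau^{-2/3})$ relative to the leading behavior, improving $\tau^{-2}$ to $\tau^{-2-1/3}$.

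The main obstacle I anticipate is the bookkeeping of cancellations: one must show that the naively-leading orders $|\tau|^{-4/3}$ and $|\tau|^{-5/3}$ genuinely vanish, which requires carefully pairing the three indices $j$ of $\lambda$ with the three indices of $\tilde\lambda$ (respecting the $\Re\lambda_1<\Re\lambda_2<\Re\lambda_3$ convention), tracking the sub-leading $-\frac{1}{3\mu_j}\tau^{-1/3}$ corrections, and exploiting the arithmetic consequences of $2k+l\in3\N^*$ — equivalently of $e^{\eta_jL}=1$ with the particular $\eta_j$ coming from \eqref{eq: exact formula for critical eigenfunctions}–\eqref{eq: defi of tilde-G}. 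A secondary but nontrivial point is to verify $E\neq0$, i.e. that $\sum_{j=1}^3\frac{\eta_{j+1}-\eta_j}{\eta_{j+2}}\neq0$ for the admissible triples $(\eta_1,\eta_2,\eta_3)$; this is a finite algebraic check using $\eta_1+\eta_2+\eta_3$ and $\eta_1\eta_2+\eta_2\eta_3+\eta_3\eta_1$ determined by $\eta^3+\eta+ip=0$ (so $\sum\eta_j=0$, $\sum_{i<j}\eta_i\eta_j=1$, $\eta_1\eta_2\eta_3=-ip$), reducing $\sum_j\frac{\eta_{j+1}-\eta_j}{\eta_{j+2}}$ to a rational function of these symmetric functions that is manifestly nonzero when $p\neq0$.
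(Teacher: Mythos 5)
Your overall strategy is the same as the paper's: substitute the large-$\tau$ asymptotics of $\lambda_j,\tilde\lambda_j$ from Lemma~\ref{lem: lambda-asymp}, do the $x$-integral explicitly (which turns each exponential into $\frac{e^{(\lambda_a+\tilde\lambda_b+\eta_c)L}-1}{\lambda_a+\tilde\lambda_b+\eta_c}$), and track orders until cancellation stops. You also correctly pinpoint the two crucial inputs -- the cubic identity $\eta_j^3+\eta_j+ip=0$ (which is exactly what the paper uses, via the identities $\sum_j\eta_{j+2}^3(\eta_{j+1}-\eta_j)=0$ and $\sum_j\eta_{j+2}^2(\eta_{j+1}-\eta_j)=-ip\sum_j\frac{\eta_{j+1}-\eta_j}{\eta_{j+2}}$) and the boundary simplification $e^{\eta_j L}=1$. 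The paper organizes the bookkeeping you call ``the main obstacle'' by isolating a main part $f_m\tilde f_m$ of the numerator (the rest is exponentially small after dividing by the denominator), splitting the result into three quantities $Z_1, Z_2, Z_3$, and expanding each to order $\tau^{-4/3}$; the cancellations at the naive orders come from the $Z_1\leftrightarrow Z_2$ symmetry together with the cubic identities, and the surviving $E\tau^{-2}$ combines a $\frac19 p^2L$ contribution from $Z_3$ with a $\frac29 p^2L$ contribution from the prefactor $e^{(\lambda_2+\tilde\lambda_2)L}$ acting on $Z_1$.

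There are two factual slips in your sketch that would cause trouble if carried out as written. First, $\tilde\mu_j=-\mu_j$ is false: from the definitions one has $\tilde\mu_j=\overline{\mu_j}$, and the actual pairing is $-\mu_j=\tilde\mu_{4-j}$; only the pair $(j,j')=(2,2)$ has $\mu_j+\tilde\mu_{j'}=0$. Since the resolvent convention $\Re\lambda_1<\Re\lambda_2<\Re\lambda_3$ is kept for both families, this means exactly one of the nine sums $\lambda_a+\tilde\lambda_b$, namely $\lambda_2+\tilde\lambda_2$, is small. Second, that small sum is $\bigO(\tau^{-2/3})$, not $\bigO(\tau^{-1/3})$: your formula $\lambda_2+\tilde\lambda_2=-\frac13\left(\frac{1}{\mu_2}+\frac{1}{\tilde\mu_2}\right)\tau^{-1/3}+\bigO(\tau^{-2/3})$ has a vanishing coefficient since $\frac{1}{\mu_2}+\frac{1}{\tilde\mu_2}=-i+i=0$. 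The paper obtains the sharp value $\lambda_2+\tilde\lambda_2=-\frac13\ii p\,\tau^{-2/3}+\bigO(\tau^{-1})$ directly from the exact relation $\lambda_2^3+\tilde\lambda_2^3+\lambda_2+\tilde\lambda_2=-ip$, and this is where the factor of $p$ (and ultimately $p^2$) comes from -- not from ``each of the two fractions in $B$ contributing a small denominator,'' since the small denominator appears only after the $x$-integration. Also, the literal ansatz $B(\tau,x)=B_0(x)+\tau^{-1/3}B_1(x)+\cdots$ is not uniform in $x$ because of the rapidly oscillating factors $e^{\lambda_j x}$; as you indeed later suggest, you must integrate in $x$ first and only then expand in $\tau^{-1/3}$. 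With these corrections the plan reproduces the paper's computation.
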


\begin{proof} 
Before presenting our proof we point out that $\eta_j$ satisfies the characteristic equation $\eta_j^3+\eta_j+ip=0$. This condition is a crucial part when we analyze the dominating order of the asymptotic behavior of $\int_0^LB(\tau,x)\diff x$, to be compared with the analysis in \cite{CKN} where lower order expansion is considered and such a condition is not required. For more details, we refer to \eqref{eq: cancel-eta-eq}.

Without loss of generality, we are able to assume that $\tau$ is positive and large. Indeed, if $\tau$ is large and negative,  we define
\[
F(\tau, x) =  \frac{\sum_{j=1}^3(e^{\lambda_{j+1} L } - e^{\lambda_{j} L
})e^{\lambda_{j+2} x} }{\sum_{j=1}^3 (\lambda_{j+1} - \lambda_{j}) e^{-\lambda_{j+2} L
}}.
\]
Then
\[
B(\tau, x) = F(\tau, x) \overline{F (\tau-p, x)} \varphi_x(x).
\]
By the definition of $F$, it is easy to check that
$F(-\tau, x) = \overline{F(\tau, x)}$. Thus, 
\begin{equation}\label{eq: B-negative-z}
B(-\tau, x) =  \overline{ F(\tau, x)
\overline{F (\tau+ p, x)}  \, \overline{\varphi_x(x)}} . 
\end{equation}

If we obtain the asymptotic behaviors for $\tau$ positive and large, then by \eqref{eq: B-negative-z}, we obtain the corresponding behavior for $\tau$ negative and large.\\

Now let us concentrate on the asymptotics for $\tau$ positive and large. Using Lemma \ref{lem: lambda-asymp}  (same as \cite[Lemma 3.3]{CKN}), for the denominator of $B(\tau, x)$, we obtain the asymptotics as follows:
\begin{multline*}
\frac{1}{\sum_{j=1}^3 (\lambda_{j+1} - \lambda_{j}) e^{-\lambda_{j+2} L }} \cdot
\frac{1}{ \sum_{j=1}^3 (\Tilde{\lambda}_{j+1} - \Tilde{\lambda}_{j}) e^{-\Tilde{\lambda}_{j+2} L }}
= \frac{e^{\lambda_1 L } e^{\Tilde{\lambda}_1 L}}{(\lambda_3 - \lambda_2)  (\Tilde{\lambda}_3 -
\Tilde{\lambda}_2)  } \left( 1 + O \left(e^{-C |\tau|^{\frac{1}{3}}} \right) \right).
\end{multline*}
Then, we consider the numerator of $B(\tau, x)$. We define
\begin{equation*}
f(\tau, x) = \sum_{j=1}^3(e^{\lambda_{j+1} L } - e^{\lambda_{j} L })e^{\lambda_{j+2} x},
\quad \Tilde{f} (\tau, x) = \sum_{j=1}^3(e^{\Tilde{\lambda}_{j+1} L } - e^{\Tilde{\lambda}_{j} L
})e^{\Tilde{\lambda}_{j+2} x},
\end{equation*}
We notice the main parts of $f$ and $\Tilde{f}$ are the following terms:
\[
f_m(\tau, x) =  - e^{\lambda_3 L} e^{\lambda_2 x} + e^{\lambda_2 L}  e^{\lambda_3 x} +
e^{\lambda_3 L } e^{\lambda_1 x}, \quad
\Tilde{f}_m (\tau, x) =  - e^{\Tilde{\lambda}_3 L} e^{\Tilde{\lambda}_2 x} + e^{\Tilde{\lambda}_2 L}  e^{\Tilde{\lambda}_3 x}
+ e^{\Tilde{\lambda}_3 L } e^{\Tilde{\lambda}_1 x}.
\]
Therefore, for the product of $f$ and $\Tilde{f}$, we have
\begin{multline*}
 \int_0^L f(\tau, x) \Tilde{f}(\tau, x) \varphi_x(x) dx
 = \int_0^L f_m(\tau, x) \Tilde{f}_m(\tau, x) \varphi_x(x) dx + \int_0^L (f- f_m)(\tau, x) \Tilde{f}_m(\tau, x)
\varphi_x(x) dx \\ 
+ \int_0^L f_m(\tau, x) (\Tilde{f} -\Tilde{f}_m) (\tau, x) \varphi_x(x) dx +
\int_0^L (f- f_m)(\tau, x) (\Tilde{f} -\Tilde{f}_m) (\tau, x) \varphi_x(x) dx.
\end{multline*}
Using Lemma \ref{lem: lambda-asymp}, we have the following estimates.  
\begin{multline*}
 \int_0^L |(f- f_m)(\tau, x) \Tilde{f}_m(\tau, x) \varphi_x(x)| dx + \int_0^L |(f- f_m)(\tau, x) (\Tilde{f}
-\Tilde{f}_m) (\tau, x) \varphi_x(x)| dx  \\ + \int_0^L |f_m(\tau, x) (\Tilde{f} -\Tilde{f}_m) (\tau, x)
\varphi_x(x)| dx \le C |e^{(\lambda_3 + \Tilde{\lambda}_3) L }| e^{- C |\tau|^{\frac{1}{3}}}.
\end{multline*}
Then combining with the denominator of $B(\tau,x)$, these three terms are bounded by $e^{(\lambda_1 + \Tilde{\lambda}_1+\lambda_3 + \Tilde{\lambda}_3) L}e^{-C|\tau|^{\frac
{1}{3}}}$.
Due to Lemma \ref{lem: lambda-asymp}, we obtain
\[
\lambda_1 + \Tilde{\lambda}_1+\lambda_3 + \Tilde{\lambda}_3=\bigO(\tau^{-\frac{1}{3}}),
\]
which implies that these three terms are negligible. It suffices to estimate the main part $\int_0^L f_m(x, \tau) \Tilde{f}_m(x, \tau) \varphi_x(x)$. By the definition of $\varphi$, we obtain
\begin{equation*}
\int_0^L f_m(x, \tau)\Tilde{f}_m(x, \tau) \varphi_x(x) = \int_0^L f_m(x, \tau) \Tilde{f}_m(x, \tau) \left(
\sum_{j=1}^3 \eta_{j+2} (\eta_{j+1} - \eta_{j}) e^{\eta_{j+2} x} \right) dx.
\end{equation*}
We sort the terms of $\int_0^L f_m(x, \tau) \Tilde{f}_m(x, \tau) \varphi_x(x)$ into four groups.
\begin{itemize}
    \item The first part, by  \eqref{eq: e^eta=1} and  Lemma \ref{lem: lambda-asymp},
\begin{multline*}
\int_0^L \left(  - e^{\lambda_3 L} e^{\lambda_2 x}  e^{\Tilde{\lambda}_2 L}  e^{\Tilde{\lambda}_3 x}
- e^{\lambda_2 L}  e^{\lambda_3 x}e^{\Tilde{\lambda}_3 L} e^{\Tilde{\lambda}_2 x}
+ e^{\lambda_2 L}  e^{\lambda_3 x}e^{\Tilde{\lambda}_2 L}  e^{\Tilde{\lambda}_3 x} \right) \\
\times \left( \sum_{j=1}^3 \eta_{j+2} (\eta_{j+1} - \eta_{j}) e^{\eta_{j+2} x} \right)
dx
=  e^{ (\lambda_3 +   \Tilde{\lambda}_3 + \lambda_2 +  \Tilde{\lambda}_2) L } \left(Z_1(\tau) + O\left(e^{-C|\tau|^{\frac{1}{3}}} \right) \right),
\end{multline*}
where
\begin{equation}\label{eq: def-Z1}
Z_1(\tau) : =  \sum_{j=1}^3 \eta_{j+2}(\eta_{j+1}-\eta_{j}) \left( \frac{1}{  \lambda_3
+   \Tilde{\lambda}_3   + \eta_{j+2}} - \frac{1}{ \lambda_3  +   \Tilde{\lambda}_2   + \eta_{j+2}} -
\frac{1}{ \lambda_2  + \Tilde{\lambda}_3  + \eta_{j+2}}  \right).
\end{equation}
\item The second part is defined by 
\begin{multline*}
\int_0^L \left( e^{\lambda_3 L } e^{\lambda_1 x}e^{\Tilde{\lambda}_3 L } e^{\Tilde{\lambda}_1 x}
- e^{\lambda_3 L } e^{\lambda_1 x} e^{\Tilde{\lambda}_3 L} e^{\Tilde{\lambda}_2 x} - e^{\lambda_3 L}
e^{\lambda_2 x} e^{\Tilde{\lambda}_3 L } e^{\Tilde{\lambda}_1 x} \right) \\
\times \left( \sum_{j=1}^3 \eta_{j+2} (\eta_{j+1} - \eta_{j}) e^{\eta_{j+2} x} \right)
dx
=  e^{(\lambda_3 + \Tilde{\lambda}_3) L }\left( Z_2(\tau) + \bigO(e^{-C|\tau|^{\frac{1}{3}}}) \right),
\end{multline*}
where
\begin{equation}\label{eq: def-Z2}
Z_2 (\tau): = \sum_{j=1}^3 \eta_{j+2}(\eta_{j+1}-\eta_{j}) \left( - \frac{1}{\lambda_1
+  \Tilde{\lambda}_1  + \eta_{j+2}} + \frac{1}{\lambda_1   +  \Tilde{\lambda}_2   + \eta_{j+2}} +
\frac{1}{ \lambda_2  +   \Tilde{\lambda}_1   + \eta_{j+2}}\right).
\end{equation}
\item The third part is defined by 
\begin{equation*}
\int_0^L e^{\lambda_3 L} e^{\lambda_2 x} e^{\Tilde{\lambda}_3 L} e^{\Tilde{\lambda}_2 x}  \left(
\sum_{j=1}^3 \eta_{j+2} (\eta_{j+1} - \eta_{j}) e^{\eta_{j+2} x} \right) dx
=  e^{(\lambda_3 + \Tilde{\lambda}_3) L } Z_3(\tau),
\end{equation*}
where
\begin{equation}\label{eq: def-Z3}
Z_3 (\tau): = \left( e^{\lambda_2 L  + \Tilde{\lambda}_2 L } - 1\right) \sum_{j=1}^3
\frac{\eta_{j+2}(\eta_{j+1}-\eta_{j})  }{\lambda_2   + \Tilde{\lambda}_2   + \eta_{j+2}}.
\end{equation}
\item The last part is defined by 
\begin{equation*}
 \int_0^L \left(  e^{\lambda_3 L } e^{\lambda_1 x} e^{\Tilde{\lambda}_2 L} e^{\Tilde{\lambda}_3
x} + e^{\lambda_2 L} e^{\lambda_3 x} e^{\Tilde{\lambda}_3 L} e^{\Tilde{\lambda}_1 x}  \right) \left(
\sum_{j=1}^3 \eta_{j+2} (\eta_{j+1} - \eta_{j}) e^{\eta_{j+2} x} \right) dx.
\end{equation*}
\end{itemize}
We estimate term by term. For the fourth part, it is easy to show that it is negligible, because thanks to Lemma \ref{lem: lambda-asymp}, we have
\begin{multline*}
\left| \int_0^L \left(  e^{\lambda_3 L } e^{\lambda_1 x} e^{\Tilde{\lambda}_2 L} e^{\Tilde{\lambda}_3
x} + e^{\lambda_2 L} e^{\lambda_3 x} e^{\Tilde{\lambda}_3 L} e^{\Tilde{\lambda}_1 x}  \right) \left(
\sum_{j=1}^3 \eta_{j+2} (\eta_{j+1} - \eta_{j}) e^{\eta_{j+2} x} \right) dx \right|
= |e^{(\lambda_3 + \Tilde{\lambda}_3)L} | \bigO(e^{-C \tau^{\frac{1}{3}}}).
\end{multline*} 
Hence, we only pay attention to the first three parts. It suffices to derive the asymptotic behaviors of $Z_1$, $Z_2$, and $Z_3$. We begin with $Z_1$ (defined in \eqref{eq: def-Z1}). Since
$\sum_{j=1}^3 \eta_{j+2}(\eta_{j+1}-\eta_{j}) = 0$, we obtain:
\begin{align*}
Z_1 (\tau)=  & \sum_{j=1}^3 \eta_{j+2}(\eta_{j+1}-\eta_{j}) \left( \frac{1}{\lambda_3 +
\Tilde{\lambda}_3 + \eta_{j+2}} -  \frac{1}{\lambda_3 + \Tilde{\lambda}_3 } \right) \\
&  \;\;\;\;\;\;\;\;  \;\;\;\;\;\;\;\;  +
 \sum_{j=1}^3 \eta_{j+2}(\eta_{j+1}-\eta_{j})  \left(  - \frac{1}{\lambda_3 +
\Tilde{\lambda}_2 + \eta_{j+2}}
 +  \frac{1}{\lambda_3 + \Tilde{\lambda}_2}  \right) \\
& \;\;\;\;\;\;\;\; \;\;\;\;\;\;\;\; \;\;\;\;\;\;\;\; \;\;\;\;\;\;\;\;  + \sum_{j=1}^3 \eta_{j+2}(\eta_{j+1}-\eta_{j})  \left(- \frac{1}{\lambda_2 +
\Tilde{\lambda}_3 + \eta_{j+2}}  + \frac{1}{\lambda_2 + \Tilde{\lambda}_3}  \right).
\end{align*}
At infinity, $\lambda_3 +\Tilde{\lambda}_3$, $\lambda_3 +
\Tilde{\lambda}_2$, and $\lambda_2 +
\Tilde{\lambda}_3$ exhibit the same behavior.
Without loss of generality, we compute the asymptotic expansion for $\frac{1}{\lambda_3 +
\Tilde{\lambda}_3 + \eta_{j+2}} -  \frac{1}{\lambda_3 + \Tilde{\lambda}_3 }$. 

Using  Lemma \ref{lem: lambda-asymp}, we obtain 
\begin{align*}
\lambda_3 +\Tilde{\lambda}_3&=(\mu_3 +
\Tilde{\mu}_3)\tau^{\frac{1}{3}}-\left(\frac{1}{3\mu_3}+\frac{1}{3\Tilde{\mu}_3}\right)\tau^{-\frac{1}{3}}+\bigO(\tau^{-\frac{2}{3}})\\
&=(\mu_3 +
\Tilde{\mu}_3)\tau^{\frac{1}{3}}\left(1-\frac{1}{3\mu_3\Tilde{\mu}_3}\tau^{-\frac{2}{3}}\right)+\bigO(\tau^{-\frac{2}{3}})
\end{align*}
Therefore, we obtain the asymptotic expansion:
\begin{align*}
\frac{1}{\lambda_3 +
\Tilde{\lambda}_3 + \eta_{j+2}} -  \frac{1}{\lambda_3 + \Tilde{\lambda}_3 }
=&-\frac{\eta_{j+2}}{(\lambda_3 +
\Tilde{\lambda}_3 + \eta_{j+2})(\lambda_3 + \Tilde{\lambda}_3)}\\
=&-\frac{\eta_{j+2}}{(\lambda_3 +
\Tilde{\lambda}_3)^2}\frac{1}{(1 + \frac{\eta_{j+2}}{\lambda_3 +
\Tilde{\lambda}_3})}\\
=&-\frac{\eta_{j+2}}{(\lambda_3 +
\Tilde{\lambda}_3)^2}(1 - \frac{\eta_{j+2}}{\lambda_3 +
\Tilde{\lambda}_3}+\frac{\eta_{j+2}^2}{(\lambda_3 +
\Tilde{\lambda}_3)^2})+\bigO((\lambda_3 +
\Tilde{\lambda}_3)^{-5})
\end{align*}
Here we notice that the leading order of $\lambda_3 +
\Tilde{\lambda}_3$ is $\bigO(\tau^{\frac{1}{3}})$. It is obvious to see that $\bigO((\lambda_3 +
\Tilde{\lambda}_3)^{-5})=\bigO(\tau^{-\frac{5}{3}})$. So we deduce that
\[
\frac{1}{\lambda_3 +
\Tilde{\lambda}_3 + \eta_{j+2}} -  \frac{1}{\lambda_3 + \Tilde{\lambda}_3 }=I_1\times I_2+\bigO(\tau^{-\frac{5}{3}}),
\]
where
\begin{gather*}
I_1:=\frac{\eta_{j+2}}{(\lambda_3 +
\Tilde{\lambda}_3)^2}=\bigO(\tau^{-\frac{2}{3}}),\\
I_2:=1 - \frac{\eta_{j+2}}{\lambda_3 +
\Tilde{\lambda}_3}+\frac{\eta_{j+2}^2}{(\lambda_3 +
\Tilde{\lambda}_3)^2}=\bigO(1).
\end{gather*}
Thus, it suffices to expand $I_1$ until the order $\bigO(\tau^{-\frac{5}{3}})$ and $I_2$ until the order $\bigO(\tau^{-1})$. More precisely,  we plug $\lambda_3 +\Tilde{\lambda}_3=(\mu_3 +
\Tilde{\mu}_3)\tau^{\frac{1}{3}}\left(1-\frac{1}{3\mu_3\Tilde{\mu}_3}\tau^{-\frac{2}{3}}\right)+\bigO(\tau^{-\frac{2}{3}})$ into the above expressions:
\begin{multline*}
I_1=-\frac{\eta_{j+2}}{\left((\mu_3 +
\Tilde{\mu}_3)\tau^{\frac{1}{3}}(1-\frac{1}{3\mu_3\Tilde{\mu}_3}\tau^{-\frac{2}{3}}+\bigO(\tau^{-1}))\right)^2}\\
=-\frac{\eta_{j+2}}{(\mu_3 +
\Tilde{\mu}_3)^2\tau^{\frac{2}{3}}}\frac{1}{\left(1-\frac{1}{3\mu_3\Tilde{\mu}_3}\tau^{-\frac{2}{3}}+\bigO(\tau^{-1})\right)^2}\\
=-\frac{\eta_{j+2}}{(\mu_3 +
\Tilde{\mu}_3)^2\tau^{\frac{2}{3}}}(1+\frac{2}{3\mu_3\Tilde{\mu}_3}\tau^{-\frac{2}{3}}+\bigO(\tau^{-1})).
\end{multline*}
For $I_2$, we also use Lemma \ref{lem: lambda-asymp} and obtain
\begin{align*}
I_2=1 - \frac{\eta_{j+2}}{(\mu_3 +
\Tilde{\mu}_3)\tau^{\frac{1}{3}}(1-\frac{1}{3\mu_3\Tilde{\mu}_3}\tau^{-\frac{2}{3}}+\bigO(\tau^{-1}))}+\frac{\eta_{j+2}^2}{\left((\mu_3 +
\Tilde{\mu}_3)\tau^{\frac{1}{3}}(1-\frac{1}{3\mu_3\Tilde{\mu}_3}\tau^{-\frac{2}{3}}+\bigO(\tau^{-1}))\right)^2}\\
=1 - \frac{\eta_{j+2}((1+\frac{1}{3\mu_3\Tilde{\mu}_3}\tau^{-\frac{2}{3}}+\bigO(\tau^{-1}))}{(\mu_3 +
\Tilde{\mu}_3)\tau^{\frac{1}{3}}}+\frac{\eta_{j+2}^2\left(1+\frac{2}{3\mu_3\Tilde{\mu}_3}\tau^{-\frac{2}{3}}+\bigO(\tau^{-1})\right)}{(\mu_3 +
\Tilde{\mu}_3)^2\tau^{\frac{2}{3}}}.
\end{align*}
In conclusion, we obtain
\[
I_1=\frac{\eta_{j+2}(1+2\frac{1}{3\mu_3\Tilde{\mu}_3}\tau^{-\frac{2}{3}})}{(\mu_3 +
\Tilde{\mu}_3)^2\tau^{\frac{2}{3}}}+\bigO(\tau^{-\frac{5}{3}})
=-\left(\frac{\eta_{j+2}\tau^{-\frac{2}{3}}}{(\mu_3 +
\Tilde{\mu}_3)^2}+\frac{2\eta_{j+2}\tau^{-\frac{4}{3}}}{3\mu_3\Tilde{\mu}_3(\mu_3 +
\Tilde{\mu}_3)^2}\right)+\bigO(\tau^{-\frac{5}{3}}),
\]
\begin{multline*}
I_2
=1 - \frac{\eta_{j+2}(1+\frac{1}{3\mu_3\Tilde{\mu}_3}\tau^{-\frac{2}{3}})}{(\mu_3+\Tilde{\mu}_3)\tau^{\frac{1}{3}}}+\frac{\eta_{j+2}^2(1+2\frac{1}{3\mu_3\Tilde{\mu}_3}\tau^{-\frac{2}{3}})}{(\mu_3 +
\Tilde{\mu}_3)^2\tau^{\frac{2}{3}}}+\bigO(\tau^{-\frac{4}{3}})
=1 - \frac{\eta_{j+2}\tau^{-\frac{1}{3}}}{(\mu_3+\Tilde{\mu}_3)}+\frac{\eta_{j+2}^2\tau^{-\frac{2}{3}}}{(\mu_3 +
\Tilde{\mu}_3)^2}+\bigO(\tau^{-1}).
\end{multline*}
As a consequence, we deduce that
\begin{align*}
&\frac{1}{\lambda_3 +
\Tilde{\lambda}_3 + \eta_{j+2}} -  \frac{1}{\lambda_3 + \Tilde{\lambda}_3 }\\
=&-\left(\frac{\eta_{j+2}}{(\mu_3 +
\Tilde{\mu}_3)^2}\tau^{-\frac{2}{3}}+\frac{2\eta_{j+2}}{3\mu_3\Tilde{\mu}_3(\mu_3 +
\Tilde{\mu}_3)^2}\tau^{-\frac{4}{3}}\right)
\left(1 - \frac{\eta_{j+2}}{(\mu_3+\Tilde{\mu}_3)}\tau^{-\frac{1}{3}}+\frac{\eta_{j+2}^2}{(\mu_3 +
\Tilde{\mu}_3)^2}\tau^{-\frac{2}{3}}\right)+\bigO(\tau^{-\frac{5}{3}})\\
=&-\frac{\eta_{j+2}}{(\mu_3 +
\Tilde{\mu}_3)^2}\tau^{-\frac{2}{3}}+\frac{\eta_{j+2}^2}{(\mu_3 +
\Tilde{\mu}_3)^3}\tau^{-1}-\left(\frac{2\eta_{j+2}}{3\mu_3\Tilde{\mu}_3(\mu_3 +
\Tilde{\mu}_3)^2}+\frac{\eta_{j+2}^3}{(\mu_3 +
\Tilde{\mu}_3)^4}\right)\tau^{-\frac{4}{3}}+\bigO(\tau^{-\frac{5}{3}}).
\end{align*}
Therefore, we obtain
\[
Z_1(\tau)=\sum_{j=1}^3 \eta_{j+2}^2(\eta_{j+1} -\eta_{j})\left(C_{11}\tau^{-\frac{2}{3}}+C_{12}\eta_{j+2}\tau^{-1}+(C_{13}+C_{14}\eta_{j+2}^2)\tau^{-\frac{4}{3}}\right)+\bigO(\tau^{-\frac{5}{3}}),
\]
where the coefficients $C_{11}$, $C_{12}$, $C_{13}$ and $C_{14}$ are defined via: 
\begin{align*}
C_{11}:=& -\frac{1}{(\mu_3 + \Tilde{\mu}_3)^2} + \frac{1}{(\mu_3 + \Tilde{\mu}_2)^2} +
\frac{1}{(\mu_2 + \Tilde{\mu}_3)^2}
=  -\frac13 + \frac{-1+i\sqrt 3}6 + \frac{-1-i\sqrt
3}6= -\frac{2}{3}\\
 C_{12}:=&\frac{1}{(\mu_3 + \Tilde{\mu}_3)^3} - \frac{1}{(\mu_3 + \Tilde{\mu}_2)^3} -
\frac{1}{(\mu_2 + \Tilde{\mu}_3)^3}
=  \frac{1}{3\sqrt{3}} +\frac{1}{3\sqrt{3}}+\frac{1}{3\sqrt{3}}= \frac{1}{\sqrt{3}}\\
 C_{13}:=&-\frac{2}{3\mu_3\Tilde{\mu}_3(\mu_3 + \Tilde{\mu}_3)^2} + \frac{2}{3\mu_3\Tilde{\mu}_2(\mu_3 + \Tilde{\mu}_2)^2} +
\frac{2}{3\mu_2\Tilde{\mu}_3(\mu_2 + \Tilde{\mu}_3)^2}
=  -\frac{2}{9} + \frac{2}{9} + \frac{2}{9}= \frac{2}{9}\\
 C_{14}:=&-\frac{1}{(\mu_3 + \Tilde{\mu}_3)^4}+ \frac{1}{(\mu_3 + \Tilde{\mu}_2)^4}+
\frac{1}{(\mu_2 + \Tilde{\mu}_3)^4}
=  -\frac{1}{9} + \frac{-1-i\sqrt 3}{18} + \frac{-1+i\sqrt
3}{18}= -\frac{2}{9}
\end{align*}
We derive that
\begin{equation}\label{eq: Z1-expression}
Z_1(\tau)  = - \frac{2}{3} \tau^{-\frac{2}{3}}  \sum_{j=1}^3 \eta_{j+2}^2(\eta_{j+1} -\eta_{j}) +  \frac{1}{\sqrt{3}}\tau^{-1}  \sum_{j=1}^3 \eta_{j+2}^3(\eta_{j+1} -\eta_{j})-  \frac{2}{9}\tau^{-\frac{4}{3}}  \sum_{j=1}^3 \eta_{j+2}^2(\eta_{j+2}^2-1)(\eta_{j+1} -\eta_{j})+
\bigO(\tau^{-\frac{5}{3}}).
\end{equation}
For $Z_2$ (defined in \eqref{eq: def-Z2}), thanks to $\sum_{j=1}^3 \eta_{j+2}(\eta_{j+1}-\eta_{j}) = 0$, we obtain
\begin{align*}
Z_2 (\tau)=  & \sum_{j=1}^3 \eta_{j+2}(\eta_{j+1}-\eta_{j}) \left( -  \frac{1}{\lambda_1
+ \Tilde{\lambda}_1 + \eta_{j+2}} +  \frac{1}{\lambda_1 + \Tilde{\lambda}_1 } \right) \\
& \;\;\;\;\;\;\;\; \;\;\;\;\;\;\;\;  +
 \sum_{j=1}^3 \eta_{j+2}(\eta_{j+1}-\eta_{j})  \left(   \frac{1}{\lambda_1 +
\Tilde{\lambda}_2 + \eta_{j+2}} -
   \frac{1}{\lambda_1 + \Tilde{\lambda}_2}  \right) \\
&  \;\;\;\;\;\;\;\; \;\;\;\;\;\;\;\; \;\;\;\;\;\;\;\; \;\;\;\;\;\;\;\; + \sum_{j=1}^3 \eta_{j+2}(\eta_{j+1}-\eta_{j})  \left( \frac{1}{\lambda_2 +
\Tilde{\lambda}_1 + \eta_{j+2}}  - \frac{1}{\lambda_2 + \Tilde{\lambda}_1}  \right).
\end{align*}
Following the same procedure, we derive that
\[
Z_2(\tau)=\sum_{j=1}^3 \eta_{j+2}^2(\eta_{j+1} -\eta_{j})\left(C_{21}\tau^{-\frac{2}{3}}+C_{22}\eta_{j+2}\tau^{-1}+(C_{23}+C_{24}\eta_{j+2}^2)\tau^{-\frac{4}{3}}\right)+\bigO(\tau^{-\frac{5}{3}}),
\]
where the coefficients $C_{21}$, $C_{22}$, $C_{23}$ and $C_{24}$ are defined via:
\begin{align*}
 C_{21}:=&\frac{1}{(\mu_1 + \Tilde{\mu}_1)^2} - \frac{1}{(\mu_1 + \Tilde{\mu}_2)^2} -
\frac{1}{(\mu_2 + \Tilde{\mu}_1)^2}
=  -\frac13 + \frac{-1+i\sqrt 3}6 + \frac{-1-i\sqrt
3}6= -\frac{2}{3}\\
C_{22}:=& -\frac{1}{(\mu_1 + \Tilde{\mu}_1)^3} + \frac{1}{(\mu_1 + \Tilde{\mu}_2)^3} +
\frac{1}{(\mu_2 + \Tilde{\mu}_1)^3}
=  \frac{1}{3\sqrt{3}} +\frac{1}{3\sqrt{3}}+\frac{1}{3\sqrt{3}}= \frac{1}{\sqrt{3}}
\end{align*}
\begin{align*}
 C_{23}:=& \frac{2}{3\mu_1\Tilde{\mu}_1(\mu_1 + \Tilde{\mu}_1)^2} - \frac{2}{3\mu_1\Tilde{\mu}_2(\mu_1 + \Tilde{\mu}_2)^2} -
\frac{2}{3\mu_2\Tilde{\mu}_1(\mu_2 + \Tilde{\mu}_1)^2}
=  -\frac{2}{9} + \frac{2}{9} + \frac{2}{9}= \frac{2}{9}\\
 C_{24}:=&\frac{1}{(\mu_1 + \Tilde{\mu}_1)^4} - \frac{1}{(\mu_1 + \Tilde{\mu}_2)^4} -
\frac{1}{(\mu_2 + \Tilde{\mu}_1)^4}
=  \frac{1}{9} - \frac{-1-i\sqrt 3}{18} - \frac{-1+i\sqrt
3}{18}= -\frac{2}{9}
\end{align*}
Therefore, we obtain
\textcolor{black}{
\begin{equation}\label{eq: Z2-expression}
Z_2(\tau)  =  \frac{2}{3} \tau^{-\frac{2}{3}}  \sum_{j=1}^3 \eta_{j+2}^2(\eta_{j+1} -\eta_{j}) +  \frac{1}{\sqrt{3}}\tau^{-1}  \sum_{j=1}^3 \eta_{j+2}^3(\eta_{j+1} -\eta_{j})+  \frac{2}{9}\tau^{-\frac{4}{3}}  \sum_{j=1}^3 \eta_{j+2}^2(\eta_{j+2}^2-1)(\eta_{j+1} -\eta_{j})+
\bigO(\tau^{-\frac{5}{3}}).
\end{equation}
}
We finally  consider  $Z_3(\tau)$ given in \eqref{eq: def-Z3}. We have, by \eqref{eq: lambda-eigenvalue-tau},
\[
\lambda_2^3 + \Tilde{\lambda}_2^3 + \lambda_2 + \Tilde{\lambda}_2 = - i \tau + i (\tau -p) =
- i p.
\]
This yields
\[
\lambda_2 + \Tilde{\lambda}_2 = - \frac{i p}{\lambda_2^2 + \Tilde{\lambda}_2^2 - \lambda_2
\Tilde{\lambda}_2+1}
\]
From  Lemma \ref{lem: lambda-asymp}, we have
\[
\lambda_2 + \Tilde{\lambda}_2 = -\frac{1}{3}\ii p  \tau^{-\frac{2}{3}}  + \bigO(\tau^{-1})
\]
It follows that
\begin{align}\label{eq: Z3-2}
\sum_{j=1}^3 \frac{\eta_{j+2}(\eta_{j+1}-\eta_{j})  }{\lambda_2 + \Tilde{\lambda}_2  +
\eta_{j+2}}  &=  \sum_{j=1}^3 \frac{\eta_{j+2}(\eta_{j+1}-\eta_{j})  }{-\frac{1}{3}\ii p  \tau^{-\frac{2}{3}}  + \eta_{j+2}}  + \bigO(\tau^{-1}) \nonumber \\
&=   \sum_{j=1}^3 (\eta_{j+1}-\eta_{j})  \left( 1+ \frac{1}{3} \frac{\ii p \tau^{-\frac{2}{3}}}{ \eta_{j+2} }
\right) + \bigO(\tau^{-1})\nonumber\\
&=   \frac{1}{3}\ii p   \sum_{j=1}^3 \frac{  \eta_{j+1}-\eta_{j}} {
\eta_{j+2} } \tau^{-\frac{2}{3}} +  \bigO(\tau^{-1}).
\end{align}
and 
\begin{gather*}
e^{(\lambda_2+\Tilde{\lambda}_2)L}-1=e^{ (-\frac{1}{3}\ii p  \tau^{-\frac{2}{3}} + \bigO(\tau^{-1}))L}-1
=-\frac{1}{3}\ii pL  \tau^{-\frac{2}{3}}  + \bigO(\tau^{-1}).
\end{gather*}
We derive from  \eqref{eq: Z3-2} and  Lemma \ref{lem: lambda-asymp} that
\begin{equation}\label{eq: Z3-expression}
Z_3(\tau) =  \frac{1}{3}\ii p    \left( -\frac{1}{3}\ii pL  \tau^{-\frac{2}{3}}  + \bigO(\tau^{-1})\right) \sum_{j=1}^3 \frac{\eta_{j+1}-\eta_{j}}
{  \eta_{j+2} } \tau^{-\frac{2}{3}}   + O (\tau^{-1})=\frac{1}{9}p^2L\tau^{-\frac{4}{3}}\sum_{j=1}^3 \frac{\eta_{j+1}-\eta_{j}}
{\eta_{j+2}}    + O (\tau^{-\frac{5}{3}}).
\end{equation}
Moreover, thanks to $\eta_j^3+\eta_j+ip=0$, we know that 
\begin{equation}\label{eq: cancel-eta-eq}
\sum_{j=1}^3\eta_{j+2}^3(\eta_{j+1}-\eta_j)=0,\sum_{j=1}^3(\eta_{j+1}-\eta_j)\eta_{j+2}^2=-ip\sum_{j=1}^3\frac{\eta_{j+1}-\eta_j}{\eta_{j+2}}.
\end{equation}

Therefore, $Z_1(\tau)+Z_2(\tau)=\bigO(\tau^{-\frac{5}{3}})$. As a consequence, we derive that
\begin{equation*}
e^{(\lambda_2+\Tilde{\lambda}_2)L}Z_1(\tau) + Z_2(\tau) + Z_3(\tau) = \bigO(\tau^{-\frac{4}{3}}) \mbox{ for large positive $\tau$}.
\end{equation*}
Using  Lemma \ref{lem: lambda-asymp}, we have
\begin{equation*}
(\lambda_3 - \lambda_2)(\Tilde{\lambda}_3 - \Tilde{\lambda}_2) = 3 \tau^{\frac{2}{3}} ( 1 + \bigO(\tau^{-\frac{1}{3}}) ).
\end{equation*}
Then, combining \eqref{eq: Z1-expression}, \eqref{eq: Z2-expression}, and \eqref{eq: Z3-expression}, we obtain
\begin{equation*}
\int_{0}^L B(\tau, x) \diff \tau  \\= \frac{1}{3 \tau^{\frac{2}{3}}} \left(e^{(\lambda_2+\Tilde{\lambda}_2)L} Z_1(\tau) +
Z_2 (\tau) + Z_3 (\tau) + \bigO(\tau^{-\frac{5}{3}}) \right)=E \tau^{-2}   + \bigO(\tau^{-\frac{7}{3}}),
\end{equation*}
which is the conclusion for large positive $\tau$.
\medskip
\end{proof}

By the definition of $B(\tau,x)$, we obtain
\begin{lemma}
Let   $p$, $\varphi$, and $\eta_j$ be the same as in Lemma \ref{lem: B asymp}.     Let $u \in L^2(0, + \infty)$ and let $y \in C([0, + \infty); L^2(0, L))
\cap L^2_{\textrm{loc}}\left([0, +\infty); H^1(0, L) \right)$ be the unique solution of
\eqref{eq: linearized controled kdv-0-0}. 
We have
\begin{equation*}
\int_{0}^{+\infty} \int_0^L  |y(t, x)|^2 \varphi_x(x) e^{- i p t} dx dt  =
\int_{\R}  \Hat{u}(\tau) \overline{\Hat{u}(\tau - p )} \left(\frac{E}{|\tau|^{2}}  + \bigO(|\tau|^{-\frac{7}{3}})
\right) \diff \tau.
\end{equation*}
\end{lemma}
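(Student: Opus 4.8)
The plan is to obtain the identity as a direct consequence of Lemma~\ref{lem: B asymp} together with the Fourier-side computation already carried out in Section~\ref{sec: reduction approach}; essentially no new analysis is required, only the bookkeeping of time-Fourier transforms. First I would transform $|y(t,x)|^2$ in time: since $t\mapsto y(t,x)$ is supported in $[0,+\infty)$, the convolution theorem gives
\[
\int_0^{+\infty}|y(t,x)|^2 e^{-\ii p t}\,\diff t=\bigl(\Hat{y}(\cdot,x)*\widehat{\bar y}(\cdot,x)\bigr)(p)=\int_{\R}\Hat{y}(\tau,x)\,\overline{\Hat{y}(\tau-p,x)}\,\diff\tau,
\]
where I used $\widehat{\bar y}(\sigma,x)=\overline{\Hat{y}(-\sigma,x)}$ for $\sigma\in\R$. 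Multiplying by $\varphi'(x)$ and integrating over $(0,L)$ reduces the left-hand side of the claimed identity to $\int_{\R}\int_0^L\Hat{y}(\tau,x)\,\overline{\Hat{y}(\tau-p,x)}\,\varphi'(x)\,\diff x\,\diff\tau$.

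Next I would substitute the explicit formula $\Hat{y}(\tau,x)=\Hat{u}(\tau)F(\tau,x)$ coming from Lemma~\ref{lem: formulation sol} (see~\eqref{eq: defi-Fourier-y}), where $F$ is the boundary-data-to-state factor obtained after using $\lambda_1(\tau)+\lambda_2(\tau)+\lambda_3(\tau)=0$ to rewrite $\det Q(\tau)$ as the denominator $\sum_{j=1}^3(\lambda_{j+1}-\lambda_j)e^{-\lambda_{j+2}L}$ that appears in~\eqref{eq: def-B}. The key elementary observation is that for real $\tau$, conjugating the cubic $\lambda^3+\lambda+\ii(\tau-p)=0$ shows $\overline{\lambda_j(\tau-p)}=\Tilde{\lambda}_j(\tau;p)$, the roots of $\Tilde{\lambda}^3+\Tilde{\lambda}-\ii(\tau-p)=0$, with matching ordering of real parts; hence $\overline{F(\tau-p,x)}$ is precisely the $\Tilde{\lambda}$-version of $F$, and therefore
\[
\Hat{y}(\tau,x)\,\overline{\Hat{y}(\tau-p,x)}\,\varphi'(x)=\Hat{u}(\tau)\,\overline{\Hat{u}(\tau-p)}\,B(\tau,x)
\]
with $B$ exactly as in~\eqref{eq: def-B}. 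A Fubini step then interchanges the $x$- and $\tau$-integrals — legitimate because Lemma~\ref{lem: lambda-asymp} furnishes exponential decay of the inner integral $\int_0^L B(\tau,x)\,\diff x$ outside a bounded set while $\Hat{u}\in L^2(\R)$ — giving $\int_{\R}\Hat{u}(\tau)\,\overline{\Hat{u}(\tau-p)}\bigl(\int_0^L B(\tau,x)\,\diff x\bigr)\,\diff\tau$, which is the representation recorded at the end of Section~\ref{sec: reduction approach}. Inserting the expansion $\int_0^L B(\tau,x)\,\diff x=E|\tau|^{-2}+\bigO(|\tau|^{-7/3})$ of Lemma~\ref{lem: B asymp}, with $E$ given by~\eqref{eq: defi-E}, then finishes the proof.

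Since Lemma~\ref{lem: B asymp} is already established, the remaining points are notational rather than analytic, and I would treat them as follows. The transform manipulations above are cleanest under the standing hypothesis that $y(t,\cdot)$ vanishes for $t\ge T$: then $\Hat{u}$ and $\Hat{y}(\cdot,x)$ are entire of exponential type by Paley--Wiener, all the integrals converge absolutely, and $\Hat{y}(\tau,x)=\Hat{u}(\tau)F(\tau,x)$ holds for every real $\tau$ rather than merely off a discrete set. I would also carefully match the index conventions of Lemma~\ref{lem: formulation sol} with the definition of $B$ in~\eqref{eq: def-B}, and I would read the displayed identity with the error term $\bigO(|\tau|^{-7/3})$ understood to absorb the exact, locally bounded remainder $\int_0^L B(\tau,x)\,\diff x-E|\tau|^{-2}$ on bounded frequency ranges — a harmless convention, since only the high-frequency tail of this quantity feeds into the coercivity estimate of Section~\ref{sec: quantitative estimates} that this lemma serves. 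The real (and essentially only) substance therefore lies upstream in Lemma~\ref{lem: B asymp}: it is the eigenmode relation $\eta_j^3+\eta_j+\ii p=0$ that forces the cancellation $\sum_{j=1}^3\eta_{j+2}^3(\eta_{j+1}-\eta_j)=0$ (hence $Z_1(\tau)+Z_2(\tau)=\bigO(\tau^{-5/3})$), which is exactly why the first non-vanishing term of $\int_0^L B(\tau,x)\,\diff x$ surfaces only at order $|\tau|^{-2}$ rather than at the generic order $|\tau|^{-4/3}$ of~\cite{CKN}.
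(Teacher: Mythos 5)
Your proposal is correct and reproduces essentially the paper's own derivation: the paper does not give a separate proof of this lemma but treats it as an immediate consequence of the Fourier/convolution identity already established in Section~\ref{sec: reduction approach} (namely $\int_0^L\int_0^\infty|y|^2\varphi' e^{-\ii pt}\diff t\diff x=\int_\R\Hat{u}(\tau)\overline{\Hat{u}(\tau-p)}\int_0^LB(\tau,x)\diff x\diff\tau$) combined with the asymptotic expansion of Lemma~\ref{lem: B asymp}. Your bookkeeping — the convolution identity for $\widehat{|y|^2}$, the conjugation relation $\overline{\lambda_j(\tau-p)}=\Tilde\lambda_j(\tau;p)$ matching the index ordering by real parts, and the Fubini justification under the standing $y(t,\cdot)=0$ for $t\ge T$ hypothesis — makes explicit exactly what the paper leaves implicit, and adds nothing that diverges from the intended route.
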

\subsection{Quantitative estimates of $Q_M$}\label{sec: quantitative estimates}

In the following proposition, we will transfer the frequency asymptotic property into an estimate with certain Sobolev norms, which is the key
ingredient for the analysis of the obstruction of 
null-controllability for small time of the KdV system \eqref{intro-sys-KdV}.

\begin{proposition} \label{prop: coercive-1/3norm}
Let   $p$, $\varphi$, and $\eta_j$ be the same as in Lemma \ref{lem: B asymp}.    Let $u \in L^2(0, + \infty)$ with $u \not \equiv 0$,  $u(t) = 0$ for $t
> T$. Let $y \in C([0, + \infty);
L^2(0, L)) \cap L^2_{\textrm{loc}}\left([0, +\infty); H^1(0, L) \right)$ be the unique solution of \eqref{eq: linearized controled kdv-0-0} with $y(t, \cdot) = 0$ for $t>T$. Then, there exists a real number $N(u) \geq 0$  such that $C^{-1}\|u\|_{H^{-1}(\R)} \leq N(u) \leq C\|u\|_{H^{-1}(\R)}$ for some  constant $C=C(L) \geq 1$,  and 
\begin{equation}\label{eq: coercive-est-y}
\int_{0}^{\infty} \int_0^L |y (t, x)|^2 e^{-\ii pt }\varphi_x(x) dx dt =
N(u)^2\left(E + \bigO(1) T^{\frac{1}{100}}\right). 
\end{equation}
\end{proposition}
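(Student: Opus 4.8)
The plan is to run the reduction of Section~\ref{sec: reduction approach} and then feed in the asymptotics of Lemma~\ref{lem: B asymp}. The starting point is the identity
\[
\int_{0}^{\infty}\!\int_0^L |y(t,x)|^2 e^{-\ii p t}\varphi_x(x)\,dx\,dt=\int_{\R}\hat u(\tau)\,\overline{\hat u(\tau-p)}\,g(\tau)\,d\tau,\qquad g(\tau):=\int_0^L B(\tau,x)\,dx,
\]
together with $g(\tau)=E|\tau|^{-2}+\bigO(|\tau|^{-7/3})$ for $|\tau|\gg1$. I want to produce a non-negative quadratic form $\mathcal Q(u)$, comparable to $\|u\|_{H^{-1}(\R)}^2$, such that the left-hand side equals $E\,\mathcal Q(u)$ modulo an error bounded by $\bigO(1)T^{1/100}\mathcal Q(u)$; then $N(u):=\mathcal Q(u)^{1/2}$ settles the statement. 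One cannot hope for the plain identity ``$=E\|u\|_{H^{-1}}^2(1+\bigO(T^{1/100}))$'': $g$ is only meromorphic, with poles at the eigenvalue parameters — the real zeros of $\det Q$, of which there are finitely many because $\det Q$ grows like $e^{c\,|\tau|^{1/3}}$ at infinity — and neither the shift $\tau\mapsto\tau-p$ nor the factor $e^{-\ii p t}$ is diagonal. All of this is controlled by the hypothesis that $u$, hence $y$, is supported in $[0,T]$ with $T$ small.

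\emph{Removing the poles.} Because $y(t,\cdot)=0$ for $t>T$, Lemma~\ref{lem: formulation sol} and Paley--Wiener show that $\hat y(\tau,\cdot)=\hat u(\tau)F(\tau,\cdot)$ — with $F$ the kernel appearing in \eqref{eq: def-B} — is entire of exponential type $\le T$ in $\tau$; hence $\hat u$ vanishes to the appropriate order at every zero of $\det Q$, so that $\hat u(\tau)\overline{\hat u(\tau-p)}g(\tau)$ extends to an entire function and the integral above is absolutely convergent. The clean way to exploit this, following \cite{CKN}, is to factor out the common structure via the entire functions $\mathcal G,\mathcal H,\Gamma$ of Lemma~\ref{lem: defi-G-H}: $\hat u/(\Xi\Gamma\mathcal H)$ is again entire, and since $\Xi\Gamma\mathcal H$ has order $<1$ the exponential type is unchanged; this is also what underlies the equivalence $\mathcal Q(u)\simeq\|u\|_{H^{-1}(\R)}^2$ used below. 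After this reduction I may and do split $g(\tau)=E\,\chi(\tau)\langle\tau\rangle^{-2}+\rho(\tau)$, where $\chi\in C^\infty(\R)$ equals $1$ for $|\tau|$ large and vanishes near the (finitely many) poles, and where the remainder satisfies $\rho(\tau)=\bigO(\langle\tau\rangle^{-7/3})$ as $|\tau|\to\infty$, its residual singularities being harmless against the vanishing of $\hat u$ (in particular $\hat u(0)=0$ exactly when $0$ is an eigenvalue, i.e. $L\in2\pi\N^*$).

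\emph{The main term.} For the $\chi$-part I write $E\int_{\R}\hat u\,\overline{\hat u(\cdot-p)}\,\chi\langle\tau\rangle^{-2}=E\langle a(D_t)u,u\rangle_{L^2}+E\langle a(D_t)u,(e^{-\ii pt}-1)u\rangle_{L^2}$ with $a(\tau):=\chi(\tau)\langle\tau\rangle^{-2}\ge0$. The first summand, $\|a(D_t)^{1/2}u\|_{L^2}^2\ge0$, is comparable to $\|u\|_{H^{-1}(\R)}^2$: the reverse inequality is immediate, and the direct one — i.e. that the frequencies in $\{1-\chi\ne0\}$ carry only a negligible fraction of $\|u\|_{H^{-1}}^2$ — is obtained as in \cite{CKN} from the vanishing of $\hat u$ at the zeros of $\det Q$ together with the growth of $\det Q$. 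I set $\mathcal Q(u):=\|a(D_t)^{1/2}u\|_{L^2}^2$. The second summand: $a(D_t)u\in H^1(\R)$ with $\|a(D_t)u\|_{H^1}\lesssim\|u\|_{H^{-1}}$, and $(e^{-\ii pt}-1)u$ is supported in $[0,T]$; pairing $H^1$ against $H^{-1}$ and using Lemma~\ref{lem: embedding-compact-support} and Corollary~\ref{cor: commutator-est} to bound $\|(e^{-\ii pt}-1)u\|_{H^{-1}}$ by a power of $T$ times $\|u\|_{H^{-1}}$, it is $\bigO(T^{\kappa_0})\|u\|_{H^{-1}}^2$ for some $\kappa_0>0$; the same mechanism disposes of the $e^{-\ii pt}$-versus-$1$ discrepancy built into the definition \eqref{eq: defi-negative-norm}.

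\emph{The error term — the main obstacle.} There remains $\int_{\R}\hat u\,\overline{\hat u(\cdot-p)}\,\rho(\tau)\,d\tau$ with $\rho=\bigO(\langle\tau\rangle^{-7/3})$. Relative to the main weight $\langle\tau\rangle^{-2}$, $\rho$ gains exactly one factor $\langle\tau\rangle^{-1/3}=(1+|D_t|^2)^{-1/6}$; this is precisely the delicate feature absent from \cite{CKN}, where the analogous gain is larger. The obstruction is that this gain lies below the threshold $s_2=-\tfrac12$ of Lemma~\ref{lem: embedding-compact-support} and, more seriously, that $(1+|D_t|^2)^{-1/6}$ is non-local and does not commute with the localisation to $[0,T]$. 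The way around it: insert the cutoff $\varrho_R$ at scale $R\sim T$, commute it past $\langle D_t\rangle^{-1/3}$ by the quantitative bound of Corollary~\ref{cor: commutator-est} (whose $R$-dependence, though far from optimal, is enough), then apply the compactly supported embedding of Lemma~\ref{lem: embedding-compact-support} together with Peetre's inequality (Proposition~\ref{prop: peetre}) to extract a factor $F(R;s_1,s_2)\to0$; after collecting all the non-optimal losses one is left with a positive power of $T$, which I simply record as $T^{1/100}$. Combining the main term with the two error contributions yields $\int_0^\infty\!\int_0^L|y|^2e^{-\ii pt}\varphi_x\,dx\,dt=N(u)^2\big(E+\bigO(1)T^{1/100}\big)$ with $N(u)^2=\mathcal Q(u)\simeq\|u\|_{H^{-1}}^2$, which is the claim. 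The hard part of the proof is exactly this last step: reconciling, quantitatively, the non-local smoothing $(1+|D_t|^2)^{-1/6}$ with the compact support of the data.
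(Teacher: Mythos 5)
Your proposal reproduces the general reduction (Fourier-transform in time, high-/low-frequency splitting, cutoff-plus-commutator to localise, embedding for compact supports) but departs from the paper in one crucial choice: you work directly with $u$ and a multiplier $a(D_t)=\chi(D_t)\langle D_t\rangle^{-2}$, whereas the paper first conjugates to $w$ defined by $\hat w=\hat u\,\mathcal H_\gamma'/\mathcal H$ (Lemma~\ref{lem: compact-supp-w}) and only then runs the analysis at the scale $\|w\|_{H^{-1/3}}$. That conjugation is not cosmetic: it is exactly what makes the two endgame lemmas usable.

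First, the norm comparison. Since $|\mathcal H_\gamma'(\tau)/\mathcal H(\tau)|$ is bounded below by $c\,\langle\tau\rangle^{-2/3}$ on all of $\R$ (it is $\gtrsim 1$ on compacts because $\mathcal H_\gamma'$ is non-vanishing and it only \emph{grows} near the real zeros of $\mathcal H$), one gets the pointwise-in-frequency bound $|\hat w(\tau)|\gtrsim\langle\tau\rangle^{-2/3}|\hat u(\tau)|$, hence $\|w\|_{H^{-1/3}}\gtrsim\|u\|_{H^{-1}}$ immediately. Your quadratic form $\mathcal Q(u)=\int\chi(\tau)\langle\tau\rangle^{-2}|\hat u|^2\,d\tau$ discards the frequencies where $\chi=0$ outright, and there is no pointwise bound from below: you would have to show that the weighted mass of $\hat u$ on the compact set $\{1-\chi\neq 0\}$ is a bounded-away-from-one fraction of the total, uniformly over admissible $u$. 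Vanishing of $\hat u$ at a finite set of points does not give that — it is an integral over a neighbourhood, not a value — and the growth of $\det Q$ does not enter. This is a genuine hole in your claim $\mathcal Q(u)\simeq\|u\|_{H^{-1}}^2$.

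Second, and more seriously, the Sobolev scale. Lemma~\ref{lem: embedding-compact-support} requires the \emph{input} regularity $s_1>-\tfrac12$; it then controls $\|\cdot\|_{H^{s_2}}$ in terms of $\|\cdot\|_{H^{s_1}}$ with a factor $F(R,s_1,s_2)\to0$. In the paper's proof the ambient norm is $\|w\|_{H^{-1/3}}$, so $s_1=-\tfrac13>-\tfrac12$ and the lemma applies (once with $s_2=-\tfrac13-2\varepsilon$, once with $s_2=-\tfrac43$ after the commutator), always returning a power of $T$ times $\|w\|_{H^{-1/3}}^2$, which is the quantity you want. In your version the ambient norm is $\|u\|_{H^{-1}}$, and $-1<-\tfrac12$: the lemma cannot be invoked with $s_1=-1$. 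You can only apply it with $s_1=0$ (since $u\in L^2$), but then the right-hand side is $\|u\|_{L^2}$, which is \emph{not} controlled by $\|u\|_{H^{-1}}$ uniformly over $u$ supported in $[0,T]$ — highly oscillatory $u$ makes the ratio $\|u\|_{L^2}/\|u\|_{H^{-1}}$ arbitrarily large. The same obstruction kills your bound for $\langle a(D_t)u,(e^{-\ii pt}-1)u\rangle$: pairing $H^1$ against $H^{-1}$ leaves you with $T\|u\|_{H^{-1}}\|u\|_{L^2}$, which is not $O(T^{\kappa})\|u\|_{H^{-1}}^2$. You actually name the $-\tfrac12$ threshold as ``the obstruction'' and then assert that Corollary~\ref{cor: commutator-est} fixes it, but the commutator bound lands you in $H^{-s-1}$ norms of the same too-negative scale; it does not lift you above $-\tfrac12$.

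The moral is that introducing $w$ is not a technical detour but the mechanism that raises the working Sobolev exponent from $-1$ to $-\tfrac13$, placing it above the $-\tfrac12$ threshold of Lemma~\ref{lem: embedding-compact-support} so that the compact support can be converted into a power of $T$. It also removes the poles of $\int_0^L B\,dx$ from the picture (your decomposition $g=E\chi\langle\tau\rangle^{-2}+\rho$ leaves $\rho$ unbounded near the poles, and estimating $\int\hat u\,\overline{\hat u(\cdot-p)}\,\rho$ there again requires the conjugation). If you insist on avoiding $w$, you would need a genuinely new argument to close the scale gap; as written, the plan does not go through.
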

Before we present our proof of Proposition \ref{prop: coercive-1/3norm}, we first recall the definition of $\mathcal{H}$ in Lemma \ref{lem: defi-G-H}. Since $\mathcal{H}$ is a non-constant entire function, we know that there exists $\gamma > 0$ such that
\begin{equation}\label{eq: no-zero-H'}
\mathcal{H}'(\tau + i \gamma) \neq 0 \mbox{ for all } \tau \in \R.
\end{equation}
Fix such a $\gamma$ and denote 
\begin{equation}\label{eq: defi-H-gamma}
\mathcal{H}_\gamma (\tau) = \mathcal{H}(\tau + i \gamma), \tau\in\C.
\end{equation}
Thus, we have the following asymptotic behaviors for $\mathcal{H}_\gamma$ and $\mathcal{H}$, whose proof can be found in \cite[Page 1216]{CKN}
\begin{lemma}\label{lem: asymp-H}
$\mathcal{H}$ and $\mathcal{H}_\gamma'$ have the following asymptotic behaviors:
\begin{enumerate}
    \item $\mathcal{H}(\tau)=\kappa 
\tau^{-\frac{2}{3}- k}e^{ - \mu_1 L \tau^{\frac{1}{3}}}\left(1+\bigO(\tau^{-\frac{1}{3}})\right)$, $\mathcal{H}'_\gamma (\tau)=-  \frac{\mu_1 L }{3} \tau^{-\frac{2}{3}}  \kappa 
\tau^{-\frac{2}{3}- k}e^{ - \mu_1 L \tau^{\frac{1}{3}}}\left(1+\bigO(\tau^{-\frac{1}{3}})\right)$, where $\kappa = -  \frac{1}{ (\mu_2 - \mu_1) (\mu_1 - \mu_3)}$.
    \item $\lim_{\tau \in \R, \tau \to + \infty}\frac{\mathcal{H}(\tau) |\tau|^{-\frac{2}{3}}}{\mathcal{H}_\gamma'(\tau)}   = \alpha: = \frac{3 e^{ - \ii \frac{\pi}{6}}}{L}$, $\lim_{\tau \in \R, \tau \to - \infty}\frac{\mathcal{H}(\tau) |\tau|^{-\frac{2}{3}}}{\mathcal{H}_\gamma'(\tau)}   = -\overline{\alpha}$.
    \item In addition, we have
\begin{align*}
 \big |\mathcal{H}(\tau) |\tau|^{-\frac{2}{3}} -   \alpha  \mathcal{H}_\gamma'(\tau) \big|  &\leq C |\mathcal{H}_\gamma'(\tau)| |\tau|^{-\frac{1}{3}} \mbox{ for large positive $\tau$},\\
 \big |\mathcal{H}(\tau) |\tau|^{-\frac{2}{3}} -   \overline{\alpha}  \mathcal{H}_\gamma'(\tau) \big|  &\leq C |\mathcal{H}_\gamma'(\tau)| |\tau|^{-\frac{1}{3}} \mbox{ for large negative $\tau$}.
\end{align*}
\end{enumerate}
\end{lemma}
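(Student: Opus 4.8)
\textbf{Plan of proof for Proposition \ref{prop: coercive-1/3norm}.}
The strategy is the one pioneered in \cite{CKN}: rewrite the quantity $Q_M$ as a Fourier-side bilinear expression in $\hat u$, split it into a main term coming from the leading asymptotics of $\int_0^L B(\tau,x)\,dx$ (Lemma \ref{lem: B asymp}) and a remainder, recognize the main term as (a constant multiple of) $\|u\|_{H^{-1}}^2$ up to a controlled error, and absorb the remainder into an $\bigO(T^{\frac1{100}})$ correction using that $u$ is supported in $(0,T)$. The novelty relative to \cite{CKN} is that the leading order is now $|\tau|^{-2}$ rather than $|\tau|^{-4/3}$, so the natural Sobolev exponent is $-1$ and the relevant nonlocal operator is $(1+|D_t|^2)^{-1/3}$ composed with entire factors; this is exactly where the microlocal toolbox of Section \ref{sec: 1d microlocal analysis} (Corollary \ref{cor: commutator-est}, Lemma \ref{lem: embedding-compact-support}, Peetre's inequality) enters.

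First I would set up the decomposition. From Section \ref{sec: reduction approach} and the preceding lemma,
\[
\int_{0}^{\infty}\!\!\int_0^L |y(t,x)|^2 e^{-\ii pt}\varphi_x(x)\,dx\,dt
= \int_{\R} \hat u(\tau)\,\overline{\hat u(\tau-p)}\Bigl(\frac{E}{|\tau|^2}+\bigO(|\tau|^{-7/3})\Bigr)\,d\tau.
\]
I would introduce a large cutoff parameter $A\gg1$, splitting $\R=\{|\tau|\le A\}\cup\{|\tau|>A\}$. On the low-frequency part, since $\hat u$ is entire of exponential type $\le T$ (Paley--Wiener, $u$ supported in $(0,T)$), $|\hat u(\tau)-\hat u(\tau-p)|\lesssim |p|\,e^{T|\Im\tau|}\|u\|_{L^1}$ on $|\tau|\le A$, so the contribution is $\bigO(T)\,\|u\|^2$-type and ultimately $\bigO(T^{1/100})N(u)^2$ after we have identified $N(u)$. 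On the high-frequency part I would first replace $\overline{\hat u(\tau-p)}$ by $\overline{\hat u(\tau)}$ at the cost of a factor $|\tau|^{-1/3}$ (again using the exponential-type bound on $\hat u'$ together with the decay of $|\tau|^{-2}$), reducing the main term to $E\int_{|\tau|>A}|\hat u(\tau)|^2|\tau|^{-2}\,d\tau$, which is $E\|u\|_{H^{-1}(\R)}^2$ up to the low-frequency tail $\int_{|\tau|\le A}|\hat u|^2\langle\tau\rangle^{-2}\,d\tau$; the latter is again a bounded-frequency, hence $\bigO(T)$-small after rescaling, term. Collecting, one gets $Q_M = E\,\|u\|_{H^{-1}(\R)}^2 + R(u)$ with $|R(u)|\le \bigO(1)\,T^{1/100}\|u\|_{H^{-1}(\R)}^2$, and setting $N(u):=\|u\|_{H^{-1}(\R)}$ would already give the statement with $C=1$; but the point of stating $N(u)$ abstractly is that, exactly as in \cite{CKN}, one actually wants to run the argument with the entire function $\mathcal H$ inserted, so I would instead define $N(u)^2$ via the $\mathcal H_\gamma'$-weighted $L^2$ norm of $\hat u$ (using Lemma \ref{lem: asymp-H}(2)--(3) to see $|\mathcal H(\tau)||\tau|^{-2/3}\sim |\alpha|\,|\mathcal H_\gamma'(\tau)|$), which is equivalent to $\|u\|_{H^{-1}}$ with constants depending only on $L$.

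The genuinely delicate step — the place where Section \ref{sec: 1d microlocal analysis} is indispensable and where this proof diverges most from \cite{CKN} — is controlling the \emph{remainder} $\int_{\R}\hat u(\tau)\overline{\hat u(\tau-p)}\,\bigO(|\tau|^{-7/3})\,d\tau$ and the frequency-shift error terms \emph{without} losing the sharp power of $T$. The naive Cauchy--Schwarz bound costs $\|u\|_{H^{-7/6}}^2$, which is worse than $\|u\|_{H^{-1}}^2$ and cannot simply be paid for by compact support: that is the manifestation of the ``degeneracy'' discussed in Section \ref{sec: degenerate case}. The fix is to write the symbol $|\tau|^{-7/3}=|\tau|^{-2}\cdot|\tau|^{-1/3}$, peel off the $|\tau|^{-2}$ into the two copies of $\hat u$ so that it becomes an $H^{-1}\times H^{-1}$ pairing against the Fourier multiplier $m(D_t)$ with $m(\tau)=\bigO(|\tau|^{-1/3})$, and then exploit that $u$ (hence $\langle D_t\rangle^{-1}u$, up to a commutator handled by Corollary \ref{cor: commutator-est}) is compactly supported in an interval of length $\bigO(T)$: Lemma \ref{lem: embedding-compact-support} with $s_1=-1/3$, $s_2=-1/2^-$ (or the logarithmic endpoint case) upgrades the gain $|D_t|^{-1/3}$ into a genuine power $T^{1/3-\varepsilon}$, and one chooses $\varepsilon$ small so that the surviving exponent exceeds $1/100$. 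Peetre's inequality is used to control the convolution kernel $m(\tau)$ against the frequency shift by $p$. The main obstacle is precisely bookkeeping this chain — commutator error $\lesssim R^{-(s+4)}$, embedding factor $F(R,s_1,s_2)$, the fact that $\mathcal H_\gamma'$-weights are only \emph{asymptotically} comparable so the substitution must be done on $|\tau|>A$ and the finite region treated by hand — while checking that every error is $\bigO(1)\,T^{1/100}N(u)^2$ and that the constant $C(L)$ never degenerates; none of the individual estimates is deep, but their interplay is where the degeneracy of the $\mathcal S_2$ case bites and must be worked through carefully.
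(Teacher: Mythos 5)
Your proposal does not prove the statement in question. The statement is Lemma \ref{lem: asymp-H}, which asserts the large-$\tau$ asymptotics of the entire functions $\mathcal{H}=H/\Gamma$ and $\mathcal{H}_\gamma'$, where $H(\tau)=\det Q(\tau)/\Xi(\tau)$ is built from the characteristic roots $\lambda_j(\tau)$ of $\lambda^3+\lambda+\ii\tau=0$. What you have written is a plan of proof for Proposition \ref{prop: coercive-1/3norm} (the coercive estimate for $Q_M$); indeed you invoke ``Lemma \ref{lem: asymp-H}(2)--(3)'' as a known ingredient inside your argument, so the target lemma is being assumed rather than proved. Nothing in your text addresses the actual content of the lemma: the identification of the dominant exponential in $\det Q(\tau)=\sum_j(\lambda_{j+1}-\lambda_j)e^{(\lambda_j+\lambda_{j+1})L}$, the computation of the prefactor $\kappa=-1/\bigl((\mu_2-\mu_1)(\mu_1-\mu_3)\bigr)$, the extra factor $\tau^{-k}$ coming from dividing by $\Gamma$, or the differentiation in $\tau$ that produces the $-\tfrac{\mu_1 L}{3}\tau^{-2/3}$ factor in $\mathcal{H}_\gamma'$.

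For the record, the paper itself does not reprove this lemma either; it cites \cite[Page 1216]{CKN}. A self-contained proof would go as follows: by Lemma \ref{lem: lambda-asymp}, $\lambda_j=\mu_j\tau^{1/3}-\tfrac{1}{3\mu_j}\tau^{-1/3}+\bigO(\tau^{-2/3})$, and since $\lambda_1+\lambda_2+\lambda_3=0$ one has $e^{(\lambda_2+\lambda_3)L}=e^{-\lambda_1 L}$, which dominates the other two exponentials in $\det Q$ because $\Re\mu_1<0$; hence $\det Q(\tau)=(\lambda_3-\lambda_2)e^{-\lambda_1 L}(1+\bigO(e^{-c\tau^{1/3}}))$. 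Dividing by $\Xi(\tau)=-(\lambda_2-\lambda_1)(\lambda_3-\lambda_2)(\lambda_1-\lambda_3)\sim(\mu_2-\mu_1)(\mu_1-\mu_3)\cdot(\lambda_3-\lambda_2)^{-1}\cdot\Xi$-type products of order $\tau^{2/3}$ each, and then by $\Gamma(\tau)\sim\tau^{k}$, yields item (1); items (2) and (3) follow by taking the quotient of the two expansions and noting that the exponential factors cancel up to $e^{-\mu_1 L((\tau)^{1/3}-(\tau+\ii\gamma)^{1/3})}=1+\bigO(\tau^{-2/3})$. You would need to supply this computation (or the explicit citation) for the proof to stand.
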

Now let us define
\begin{equation}\label{eq: defi-w}
\Hat{w}(\tau):=\frac{\Hat{u}(\tau)\mathcal{H}_\gamma'(\tau)}{\mathcal{H}(\tau)}.
\end{equation}
\begin{lemma}\label{lem: compact-supp-w}
$\Hat{w}$ is an entire function and satisfies Paley–Wiener’s conditions in $(-T-\varepsilon,T+\varepsilon)$, $\forall\varepsilon>0$. Therefore, $\Hat{w}$ is the Fourier transform of a $L^2$ function $w$, with $\supp{w}\subset[-T,T]$.
\end{lemma}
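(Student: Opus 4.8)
The plan is to check, for the function $\hat w$ introduced in \eqref{eq: defi-w}, the three hypotheses of the $L^2$ Paley--Wiener theorem on the interval $(-T-\varepsilon,T+\varepsilon)$: that $\hat w$ extends to an entire function, that $\hat w|_\R\in L^2(\R)$, and that $|\hat w(z)|\le C_\varepsilon e^{(T+\varepsilon)|\Im z|}$ for all $z\in\C$. Once these are established for every $\varepsilon>0$, the theorem produces $w:=\mathcal F^{-1}\hat w\in L^2(\R)$ with $\supp w\subset[-T-\varepsilon,T+\varepsilon]$; intersecting over $\varepsilon>0$ then gives $\supp w\subset[-T,T]$, which is the assertion.

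The first two points are the soft ones. Since $u\in L^2(0,+\infty)$ vanishes outside $[0,T]$, the function $\hat u$ is entire, belongs to $L^2(\R)\cap L^\infty(\R)$, and obeys $|\hat u(z)|\le Ce^{T|\Im z|}$. The hypothesis that the solution $y$ of \eqref{eq: linearized controled kdv-0-0} satisfies $y(t,\cdot)=0$ for $t>T$ forces its Neumann trace $\partial_x y(\cdot,0)$ to be supported in $[0,T]$, so its Fourier transform $\partial_x\hat y(\tau,0)=\hat u(\tau)P(\tau)/\det Q(\tau)=\hat u(\tau)\mathcal G(\tau)/\mathcal H(\tau)$ (Lemma \ref{lem: formulation sol}, together with $P/\det Q=\mathcal G/\mathcal H$ from Lemma \ref{lem: defi-G-H}; see also the characterization of controls steering $0$ to $0$ in Section \ref{sec: control-0-0}) extends to an entire function. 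As $\mathcal G$ and $\mathcal H$ have no common zero (Lemma \ref{lem: defi-G-H}), this compels $\hat u$ to vanish at each zero of $\mathcal H$ to at least its order, i.e.\ $\hat u/\mathcal H$ is entire; hence $\hat w=(\hat u/\mathcal H)\,\mathcal H_\gamma'$ is entire, $\mathcal H_\gamma'$ being entire because $\mathcal H$ is. For square integrability on $\R$, Lemma \ref{lem: asymp-H} gives $\mathcal H(\tau)\neq0$ and $|\mathcal H_\gamma'(\tau)/\mathcal H(\tau)|\le C|\tau|^{-2/3}$ for $|\tau|\ge R_0$, so $|\hat w(\tau)|\le C|\tau|^{-2/3}|\hat u(\tau)|$ there; combined with the boundedness of the entire function $\hat w$ on $\{|\tau|\le R_0\}$ and with $\hat u\in L^2(\R)\cap L^\infty(\R)$, this yields $\hat w|_\R\in L^2(\R)\cap L^\infty(\R)$.

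The exponential-type estimate is the crux. Both $\mathcal H$ and $\mathcal H_\gamma'$ are entire of order $1/3<1$, hence of minimal exponential type: for every $\varepsilon>0$ one has $|\mathcal H_\gamma'(z)|\le C_\varepsilon e^{\varepsilon|z|}$, and by the minimum-modulus theorem for functions of order $<1$, $|\mathcal H(z)|\ge c_\varepsilon e^{-\varepsilon|z|}$ outside a union of small disks centred at the zeros of $\mathcal H$ (with radii summing to $o(|z|)$). On the complement of these disks, $|\hat w(z)|\le|\hat u(z)|\,|\mathcal H_\gamma'(z)/\mathcal H(z)|\le C_\varepsilon e^{T|\Im z|+2\varepsilon|z|}$; and inside each disk the same bound survives by the maximum principle, since $\hat w$ is holomorphic there — the zeros of $\mathcal H$ being cancelled by those of $\hat u$ — and the disks are small. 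Thus $\hat w$ is entire, bounded on $\R$, of order $\le1$ with $|\hat w(z)|\le C_\varepsilon e^{(T+2\varepsilon)|z|}$, and $|\hat w(\pm iy)|\le C_\varepsilon e^{(T+2\varepsilon)y}$ for $y>0$. A Phragm\'en--Lindel\"of (indicator) argument in the half-planes $\pm\Im z>0$, built on these facts, then improves the bound to $|\hat w(z)|\le C_\varepsilon e^{(T+2\varepsilon)|\Im z|}$ on all of $\C$, and the Paley--Wiener conclusion follows since $\varepsilon>0$ is arbitrary. This step, where the choice \eqref{eq: no-zero-H'} of $\gamma$ and the sharp asymptotics of Lemma \ref{lem: asymp-H} really enter, is the main obstacle; it is the analogue, in our degenerate setting, of the corresponding argument in \cite{CKN}.
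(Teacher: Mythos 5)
Your proof is correct and follows essentially the same route that the paper itself delegates to \cite{CKN}: entireness of $\hat w=(\hat u/\mathcal H)\,\mathcal H_\gamma'$ from the fact that $\partial_x y(\cdot,0)$ is supported in $[0,T]$ (so $\hat u\mathcal G/\mathcal H$ is entire of exponential type $T$, and the no-common-zero property of $\mathcal G,\mathcal H$ forces $\hat u$ to cancel the zeros of $\mathcal H$), $L^2$-integrability on $\mathbb R$ from the asymptotics in Lemma~\ref{lem: asymp-H}, and the exponential-type bound from the fact that $\mathcal H$ and $\mathcal H_\gamma'$ are entire of order $1/3<1$. The minimum-modulus/Phragm\'en--Lindel\"of part is the genuinely delicate step and your treatment is compressed (the exceptional disks in the Cartan/Boutroux--Cartan estimate have total radius $\eta r$ with $\eta$ chosen small, not literally $o(|z|)$, and the maximum-principle patch has to absorb this into the $\varepsilon|z|$ slack), but these are standard technicalities and the argument does go through; this is consistent with the paper, which states the lemma and refers to \cite{CKN} for the construction rather than reproducing the proof.
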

The properties and the construction of $w$ follow the same approach as Coron-Koenig-Nguyen. However, the main difficulty for us is to obtain a quantitative estimate associated with our asymptotic expansion of 
\[
 \int_0^L B(\tau, x) dx  =
\frac{E}{|\tau|^{2}}    + \bigO(|\tau|^{- \frac{7}{3}}) \mbox{ for $\tau\in \R$ with large $|\tau|$}.
\]
We need to emphasize that we are now in a degenerate situation, which implies we need to deal with a higher-order non-vanishing term $\sim\frac{1}{\tau^2}$. This in turn gives us a lower-order Sobolev norm $\sim\|u\|^2_{H^{-1}(\R)}$, which requires a more delicate analysis to derive our quantitative estimates. \\
Now we are in a position to prove Proposition \ref{prop: coercive-1/3norm}.
\begin{proof}[Proof of Proposition \ref{prop: coercive-1/3norm}]
Thanks to Lemma \ref{lem: asymp-H} and the definition of $\Hat{w}$, we have
\begin{equation}\label{eq: bound-hu}
|\Hat{u}(\tau)|=\left|\frac{\Hat{w}(\tau)\mathcal{H}(\tau)}{\mathcal{H}'_{\gamma}(\tau)}\right|\leq C|\Hat{w}(\tau)||\tau|^{\frac{2}{3}}.
\end{equation}
In this proof, to simplify the notation, we will use $C$ to denote a strictly positive constant that is independent of $T$. This $C$ may change from line to line.

By Lemma \ref{lem: B asymp}, we know that $\int_0^LB(\tau,x)\diff x$ is uniformly bounded by $\frac{C}{1+|\tau|^2}$ for $\forall\tau\in\R$ (we refer to \cite[Page 1216, (3.47)]{CKN} for more details). Therefore, we derive that
\begin{equation*}
\left|\Hat{u} (\tau) \overline{\Hat{u} (\tau - p)} \int_0^L B(\tau, x) \diff x \right|  \le  \frac{C|\tau|^{\frac{2}{3}}|\tau-p|^{\frac{2}{3}}}{1+|\tau|^2} |\Hat{w}(\tau)|
|\Hat{w} (\tau-p)| \mbox{ for } \tau \in \R.
\end{equation*}
As a consequence of Peetre's inequality,
\[
|\tau|^{\frac{2}{3}}|\tau-p|^{\frac{2}{3}}\leq (1+|\tau|^2)^{\frac{1}{3}}(1+|\tau-p|^2)^{\frac{1}{3}}\leq C(1+|\tau|^2)^{\frac{2}{3}}(1+|p|^2)^{\frac{1}{3}}, 
\]
which implies that 
\begin{equation}\label{eq: bound-integral-B-u}
\left|\Hat{u} (\tau) \overline{\Hat{u} (\tau - p)} \int_0^L B(\tau, x) \diff x \right|  \le  \frac{C}{(1+|\tau|^2)^{\frac{1}{3}}}|\Hat{w}(\tau)|
|\Hat{w} (\tau-p)| \mbox{ for } \tau \in \R.
\end{equation}
We claim that there exists $\delta_0>0$ such that
\[
\left|\int_{\R} \Hat{u} (\tau) \overline{\Hat{u} (\tau - p)} \int_0^L B(\tau, x) \diff x \diff \tau  - E
|\alpha|^2
\int_{\R} (1+|\tau|^2)^{-\frac{1}{6}}(1+|\tau-p|^2)^{-\frac{1}{6}} \Hat{w} (\tau) \overline{ \Hat{w}(\tau-p)} \diff \tau  \right|\leq CT^{\delta_0}\|w\|^2_{H^{-\frac{1}{3}}(\R)}.
\]
\\
\vspace{3mm}
Note that, for $m \ge 1$ to be specified later,
\begin{multline*}
\left|\int_{|\tau| > m} \Hat{u} (\tau) \overline{\Hat{u} (\tau - p)} \int_0^L B(\tau, x) \diff x \diff \tau - E
|\alpha|^2
\int_{|\tau| > m} (1+|\tau|^2)^{-\frac{1}{6}}(1+|\tau-p|^2)^{-\frac{1}{6}} \Hat{w} (\tau) \overline{\Hat{w}(\tau-p)} \diff \tau \right| \\
\le \int_{|\tau| > m} \left| \Hat{u} (\tau) \overline{\Hat{u} (\tau - p)} \left( \int_0^L B(\tau, x) \diff x
- E |\tau|^{-2} \right)   \right| \diff \tau \\
+ |E|  \int_{|\tau| > m} |\tau|^{-\frac{2}{3}}\left|(1+|\tau|^2)^{-\frac{1}{6}}(1+|\tau-p|^2)^{-\frac{1}{6}} |\tau|^{\frac{2}{3}}|\alpha|^2 \Hat{w} (\tau)  \overline{\Hat{w} (\tau - p)} -  |\tau|^{-\frac{4}{3}} \Hat{u}
(\tau)
\overline{\Hat{u} (\tau - p)} \right| d\tau.
\end{multline*}
On the one hand, for the first part of the right-hand side, by Lemma \ref{lem: B asymp} and \eqref{eq: bound-hu}, we obtain
\begin{equation}\label{eq: est-B-u-asy}
\int_{|\tau| > m} \left| \Hat{u} (\tau) \overline{\Hat{u} (\tau - p)} \left( \int_0^L B(\tau, x) \diff x
- E |\tau|^{-2} \right)   \right| \diff \tau\leq C\int_{|\tau| > m}|\tau|^{-1}|\Hat{w}(\tau)|
|\Hat{w} (\tau-p)| \diff \tau.
\end{equation}
\\
For $|\tau|$ large enough, we have 
\[|\tau|^{\frac{2}{3}}(1+|\tau|^2)^{-\frac{1}{6}}(1+|\tau-p|^2)^{-\frac{1}{6}}=1+\frac{p}{3\tau}+\bigO(\frac{1}{\tau^2}).\]
Therefore, for the second part of the right-hand side, we derive  that
\begin{equation}\label{eq: second-part}
\begin{aligned}
&|E|  \int_{|\tau| > m} |\tau|^{-\frac{2}{3}}\left|(1+|\tau|^2)^{-\frac{1}{6}}(1+|\tau-p|^2)^{-\frac{1}{6}} |\tau|^{\frac{2}{3}}|\alpha|^2 \Hat{w} (\tau)  \overline{\Hat{w} (\tau - p)} -  |\tau|^{-\frac{4}{3}} \Hat{u}
(\tau)
\overline{\Hat{u} (\tau - p)} \right| d\tau\\
&\leq C \int_{|\tau| > m} |\Hat{w} (\tau)| |\Hat{w} (\tau - p)|  |\tau|^{-\frac{5}{3}} d\tau+C \int_{|\tau| > m} |\Hat{w} (\tau)| |\Hat{w} (\tau - p)|  |\tau|^{-1} d\tau\\
&\le  C \int_{|\tau| > m} |\Hat{w} (\tau)| |\Hat{w} (\tau - p)|  |\tau|^{-1} d\tau.
\end{aligned}
\end{equation}
Putting \eqref{eq: est-B-u-asy} and \eqref{eq: second-part} together, we obtain
\begin{multline*}
\left|\int_{|\tau| > m} \Hat{u} (\tau) \overline{\Hat{u} (\tau - p)} \int_0^L B(\tau, x) \diff x \diff \tau - E
|\alpha|^2
\int_{|\tau| > m} (1+|\tau|^2)^{-\frac{1}{6}}(1+|\tau-p|^2)^{-\frac{1}{6}} \Hat{w} (\tau) \overline{\Hat{w}(\tau-p)} \diff \tau \right| \\
\leq  C \int_{|\tau| > m} |\Hat{w} (\tau)| |\Hat{w} (\tau - p)|  |\tau|^{-1} d\tau.
\end{multline*}
\\
\vspace{3mm}
Then we estimate the low-frequency part $|\tau|\leq m$. 
Combining the estimate \eqref{eq: bound-integral-B-u} and $(1+|\tau|^2)^{-\frac{1}{6}}(1+|\tau-p|^2)^{-\frac{1}{6}}\leq1$, we obtain
\begin{multline*}
\left|\int_{|\tau|\leq m} \Hat{u} (\tau) \overline{\Hat{u} (\tau - p)} \int_0^L B(\tau, x) \diff x \diff \tau  - E
|\alpha|^2
\int_{|\tau|\leq m} (1+|\tau|^2)^{-\frac{1}{6}}(1+|\tau-p|^2)^{-\frac{1}{6}} \Hat{w} (\tau) \overline{ \Hat{w}(\tau-p)} \diff \tau  \right| \\
\le  C \int_{|\tau| \le m} |\Hat{w} (\tau)| |\overline{\Hat{w} (\tau - p)}| \diff  \tau .
\end{multline*}
\begin{remark}
It is noteworthy to mention that here if we use \eqref{eq: bound-integral-B-u}, it is easy to obtain 
\[
\textrm{LHS}\leq   C \int_{|\tau| \le m}(1+|\tau|^2)^{-\frac{1}{3}} |\Hat{w} (\tau)| |\overline{\Hat{w} (\tau - p)}| \diff  \tau. 
\]
This is not sufficient to track the dependence of $T$. In fact, the difficult part of this proof is to gain the smallness of $T$ at the energy level $H^{-\frac{1}{3}}(\R)$, where we cannot apply the compact support of $w$ directly, due to the nonlocal effect of $\langle D_t\rangle^{-\frac{1}{6}}$.

Compared with \cite{CKN}, they have the bound
\[
\textrm{LHS}\leq   C \int_{|\tau| \le m} |\Hat{w} (\tau)| |\overline{\Hat{w} (\tau - p)}| \diff  \tau. 
\]
They gain the smallness of $T$ by 
\[
|\Hat{w} (\tau)|\leq C\|w\|_{L^1(\R)}=C\|w\|_{L^1((-T,T))}\leq CT^{\frac{1}{2}}\|w\|_{L^2(\R)}.
\]
The second equality holds because of $\supp{w}\subset[-T,T]$. However, in our case, in the appropriate energy level $H^{-\frac{1}{3}}(\R)$, we cannot directly have
\[
|(1+|\tau|^2)^{-\frac{1}{6}}\Hat{w} (\tau)|\leq C\|\langle D_t\rangle^{-\frac{1}{6}}w\|_{L^1(\R)}\sim C\|\langle D_t\rangle^{-\frac{1}{6}}w\|_{L^1((-T,T))}.
\]
So we introduce a method involving Lemma \ref{lem: embedding-compact-support} and pseudodifferential operators to quantify the smallness of $T$ gained from the compact support of $w$.

Here we choose to use $(1+|\tau|^2)^{-\frac{1}{3}}\leq 1$. In fact, this is not the optimal choice, but it is sufficient to conclude.
\end{remark}
Summing up the estimates for high-frequency part and low-frequency part, we derive
\begin{multline*}
\left|\int_{\R} \Hat{u} (\tau) \overline{\Hat{u} (\tau - p)} \int_0^L B(\tau, x) \diff x \diff \tau  - E
|\alpha|^2
\int_{\R} (1+|\tau|^2)^{-\frac{1}{6}}(1+|\tau-p|^2)^{-\frac{1}{6}} \Hat{w} (\tau) \overline{ \Hat{w}(\tau-p)} \diff \tau  \right| \\
\le  C \int_{|\tau| \le m} |\Hat{w} (\tau)| |\overline{\Hat{w} (\tau - p)}| \diff  \tau + C m^{-\frac{1}{3}}
\int_{|\tau| > m} |\tau|^{-\frac{2}{3}}|\Hat{w} (\tau)| |\Hat{w}(\tau-p)| \diff \tau.
\end{multline*}
We first deal with the term $\int_{|\tau| \le m} |\Hat{w} (\tau)| |\overline{\Hat{w} (\tau - p)}| \diff  \tau$,
\begin{multline*}
\int_{|\tau| \le m} |\Hat{w} (\tau)| |\overline{\Hat{w} (\tau - p)}| \diff  \tau \\
\le  \int_{|\tau| \le m}(1+|\tau|^2)^{\frac{1}{6}+\varepsilon}(1+|\tau-p|^2)^{\frac{1}{6}+\varepsilon}(1+|\tau|^2)^{-\frac{1}{6}-\varepsilon}(1+|\tau-p|^2)^{-\frac{1}{6}-\varepsilon} |\Hat{w} (\tau)| |\overline{\Hat{w} (\tau - p)}| \diff  \tau
\end{multline*}
Using Peetre's inequality in Proposition \ref{prop: peetre},
\begin{equation*}
(1+|\tau|^2)^{\frac{1}{6}+\varepsilon}(1+|\tau-p|^2)^{\frac{1}{6}+\varepsilon}\leq 2^{\frac{1}{6}+\varepsilon}(1+|\tau|^2)^{\frac{1}{3}+2\varepsilon}(1+|p|^2)^{\frac{1}{6}+\varepsilon}
\end{equation*}
Using $|\tau|\leq m$ and $m\geq1$,
\begin{equation*}
(1+|\tau|^2)^{\frac{1}{6}+\varepsilon}(1+|\tau-p|^2)^{\frac{1}{6}+\varepsilon}\leq 2^{\frac{1}{6}+\varepsilon}(1+|p|^2)^{\frac{1}{6}+\varepsilon}|m|^{\frac{2}{3}+4\varepsilon}
\end{equation*}
Therefore,
\begin{multline*}
\int_{|\tau| \le m} |\Hat{w} (\tau)| |\overline{\Hat{w} (\tau - p)}| \diff  \tau \\
\le  2^{\frac{1}{6}+\varepsilon}(1+|p|^2)^{\frac{1}{6}+\varepsilon}|m|^{\frac{2}{3}+4\varepsilon}\int_{|\tau| \le m}(1+|\tau|^2)^{-\frac{1}{6}-\varepsilon}(1+|\tau-p|^2)^{-\frac{1}{6}-\varepsilon} |\Hat{w} (\tau)| |\overline{\Hat{w} (\tau - p)}| \diff  \tau\\
\leq 2^{\frac{1}{6}+\varepsilon}(1+|p|^2)^{\frac{1}{6}+\varepsilon}m^{\frac{2}{3}+4\varepsilon}\|w\|^2_{H^{-\frac{1}{3}-2\varepsilon}(\R)}.
\end{multline*}
By Lemma \ref{lem: compact-supp-w}, we know that $w$ has compact support $[-T,T]$. For $T\in(0,1)$, by Lemma \ref{lem: embedding-compact-support}, we know that for $-\frac{1}{3}-2\varepsilon>-\frac{1}{2}$,
\begin{equation*}
\|w\|^2_{H^{-\frac{1}{3}-2\varepsilon}(\R)}\leq CT^{4\varepsilon}\|w\|^2_{H^{-\frac{1}{3}}(\R)}.
\end{equation*}
Hence, we derive that
\begin{equation*}
\int_{|\tau| \le m} |\Hat{w} (\tau)| |\overline{\Hat{w} (\tau - p)}| \diff  \tau \le Cm^{\frac{2}{3}+4\varepsilon}T^{4\varepsilon}\|w\|^2_{H^{-\frac{1}{3}}(\R)}.
\end{equation*}
For the other term $m^{-\frac{1}{3}}
\int_{|\tau| > m} |\tau|^{-\frac{2}{3}}|\Hat{w} (\tau)| |\Hat{w}(\tau-p)| \diff \tau$,
\begin{multline*}
\int_{|\tau| > m} |\tau|^{-\frac{2}{3}}|\Hat{w} (\tau)| |\Hat{w}(\tau-p)| \diff \tau\\
=\int_{|\tau| > m} \frac{(1+|\tau|^2)^{\frac{1}{6}}(1+|\tau-p|^2)^{\frac{1}{6}}}{|\tau|^{\frac{2}{3}}}(1+|\tau|^2)^{-\frac{1}{6}}(1+|\tau-p|^2)^{-\frac{1}{6}}|\Hat{w} (\tau)| |\Hat{w}(\tau-p)| \diff \tau
\end{multline*}
Applying Peetre's inequality again, we obtain
\begin{multline*}
\frac{(1+|\tau|^2)^{\frac{1}{6}}(1+|\tau-p|^2)^{\frac{1}{6}}}{|\tau|^{\frac{2}{3}}}\le \frac{2^{\frac{1}{6}}(1+|\tau|^2)^{\frac{1}{3}}(1+|p|^2)^{\frac{1}{6}}}{|\tau|^{\frac{2}{3}}}\le 2^{\frac{1}{6}}(1+|\tau|^{-2})^{\frac{1}{3}}(1+|p|^2)^{\frac{1}{6}}\le 2^{\frac{1}{2}}(1+|p|^2)^{\frac{1}{6}}.
\end{multline*}
Consequently, we have
\begin{equation*}
m^{-\frac{1}{3}}
\int_{|\tau| > m} |\tau|^{-\frac{2}{3}}|\Hat{w} (\tau)| |\Hat{w}(\tau-p)| \diff \tau \leq Cm^{-\frac{1}{3}}\|w\|^2_{H^{-\frac{1}{3}}(\R)}.
\end{equation*}
In summary, we obtain the following estimate:
\begin{multline*}
\left|\int_{\R} \Hat{u} (\tau) \overline{\Hat{u} (\tau - p)} \int_0^L B(\tau, x) \diff x \diff \tau  - E
|\alpha|^2
\int_{\R} (1+|\tau|^2)^{-\frac{1}{6}}(1+|\tau-p|^2)^{-\frac{1}{6}} \Hat{w} (\tau)  \overline{\Hat{w}(\tau-p)} \diff  \tau \right|\\
\le C \left( m^{\frac{2}{3}+4\varepsilon}T^{4\varepsilon} + m^{-\frac{1}{3}} \right) \|w\|^2_{H^{-\frac{1}{3}}(\R)}.
\end{multline*}
Let $m^{\frac{2}{3}+4\varepsilon}T^{4\varepsilon} = m^{-\frac{1}{3}}$. Thus, $m=T^{-\frac{4\varepsilon}{1+4\varepsilon}}$. Taking $\varepsilon=\frac{1}{36}$, then  $\delta_0=\frac{4\varepsilon}{3(1+4\varepsilon)}=\frac{1}{30}$ and $-2\varepsilon-\frac{1}{3}>-\frac{1}{2}$, we conclude the claim by the following inequality
\begin{multline*}
\left|\int_{\R} \Hat{u} (\tau) \overline{\Hat{u} (\tau - p)} \int_0^L B(\tau, x) \diff x \diff \tau  - E
|\alpha|^2
\int_{\R} (1+|\tau|^2)^{-\frac{1}{6}}(1+|\tau-p|^2)^{-\frac{1}{6}} \Hat{w} (\tau)  \overline{\Hat{w}(\tau-p)} \diff  \tau \right|\\
\le C T^{\frac{1}{30}} \|w\|^2_{H^{-\frac{1}{3}}(\R)}.
\end{multline*}
\\
\vspace{3mm}
Now we deal with the main term of $Q_M$. It is easy to write
\[
\int_{\R} (1+|\tau|^2)^{-\frac{1}{6}}(1+|\tau-p|^2)^{-\frac{1}{6}} \Hat{w} (\tau)  \overline{\Hat{w}(\tau-p)} \diff  \tau=\int_{\R}|f|^2e^{-\ii  tp}\diff t,
\]
where $\Hat{f}(\tau)=(1+|\tau|^2)^{-\frac{1}{6}}\Hat{w} (\tau)$ or we denote by $f=\langle D_t\rangle^{-\frac{1}{3}}w$. We choose two cutoff functions $\chi,\Tilde{\chi}$ such that $\chi=\Tilde{\chi}=1$ on $[-1,1]$, and $\supp\chi\subset[-2,2]\subset\supp{\Tilde{\chi}}\subset[-3,3]$. In addition, we choose $\Tilde{\chi}=1$ on the support of $\chi$, i.e. $\Tilde{\chi}\chi=\chi$. Let $\beta\in (0,1)$ and $T\in(0,1)$. We define
\[
\Tilde{\chi}_{\beta}(t):=\Tilde{\chi}(\frac{t}{T^{\beta}}),\quad \chi_{\beta}(t):=\chi(\frac{t}{T^{\beta}}),\forall t\in\R.
\]
Thanks to $\supp w\subset[-T,T]\subset(-T^{\beta},T^{\beta})$, we have the following identity:
\[
\Tilde{\chi}_{\beta}w=w,\;\chi_{\beta}w=w.
\]
Thus,
\begin{multline*}
\int_{\R}|f|^2e^{-\ii  tp}\diff t=\int_{\R}|\Tilde{\chi}_{\beta}f|^2e^{-\ii  tp}\diff t+\int_{\R}(1-\Tilde{\chi}_{\beta}^2)|f|^2e^{-\ii  tp}\diff t\\
=\int_{\R}|\Tilde{\chi}_{\beta}f|^2(1+\bigO(T^{\beta}))\diff t+\int_{\R}(1-\Tilde{\chi}_{\beta}^2)|f|^2e^{-\ii  tp}\diff t
\end{multline*}
Away from time $0$, we aim to prove $\int_{\R}(1-\Tilde{\chi}_{\beta}^2)|f|^2e^{-\ii  tp}\diff t$ is negligible when $T$ is sufficiently small. 
\begin{align*}
\left|\int_{\R}(1-\Tilde{\chi}_{\beta}^2)|f|^2e^{-\ii  tp}\diff t\right|&\leq \int_{\R}(1-\Tilde{\chi}_{\beta}^2)|f|^2\diff t\\
&\leq \int_{\R}(1-\Tilde{\chi}_{\beta}^2)|\langle D_t\rangle^{-\frac{1}{3}}\chi_{\beta} w|^2\diff t\\
&\leq 2\int_{\R}(1-\Tilde{\chi}_{\beta}^2)|[\langle D_t\rangle^{-\frac{1}{3}},\chi_{\beta}] w|^2\diff t+2 \int_{\R}(1-\Tilde{\chi}_{\beta}^2)\chi_{\beta}^2|\langle D_t\rangle^{-\frac{1}{3}} w|^2\diff t.
\end{align*}
For the term $\int_{\R}(1-\Tilde{\chi}_{\beta}^2)\chi_{\beta}^2|\langle D_t\rangle^{-\frac{1}{3}} w|^2\diff t$, due to $(1-\Tilde{\chi}_{\beta}^2)\chi_{\beta}^2\equiv0$, we know it vanishes. Then
\[
\left|\int_{\R}(1-\Tilde{\chi}_{\beta}^2)|f|^2e^{-\ii  tp}\diff t\right|\le \int_{\R}(1-\Tilde{\chi}_{\beta}^2)|[\langle D_t\rangle^{-\frac{1}{3}},\chi_{\beta}] w|^2\diff t.
\]
Applying Corollary \ref{cor: commutator-est}, we know the commutator $\|[\langle D_t\rangle^{-\frac{1}{3}},\chi_{\beta}]\|_{\mathcal{L}(H^{-\frac{4}{3}}(\R),L^2(\R))}\leq CT^{-s_0\beta}$ with $s_0=\frac{13}{3}$. We emphasize that this constant $C$ is independent of $T$. Hence, 
\[
\int_{\R}(1-\Tilde{\chi}_{\beta}^2)|[\langle D_t\rangle^{-\frac{1}{3}},\chi_{\beta}] w|^2\diff t\le \|[\langle D_t\rangle^{-\frac{1}{3}},\chi_{\beta}] w\|^2_{L^2(\R)}
\le CT^{-2s_0\beta}\|w\|^2_{H^{-\frac{4}{3}}(\R)}.
\]
Using Lemma \ref{lem: embedding-compact-support}, we have
\[
\int_{\R}(1-\Tilde{\chi}_{\beta}^2)|[\langle D_t\rangle^{-\frac{1}{3}},\chi_{\beta}] w|^2\diff t\le CT^{-2s_0\beta}\|w\|^2_{H^{-\frac{4}{3}}(\R)}\le CT^{2(\frac{1}{6}-s_0\beta)}(1+\ln{\frac{1}{T}})\|w\|^2_{H^{-\frac{1}{3}}(\R)}.
\]
On the other hand, 
\begin{multline*}
\int_{\R}|\Tilde{\chi}_{\beta}f|^2\diff t=\int_{\R}|\Tilde{\chi}_{\beta}\langle D_t\rangle^{-\frac{1}{3}}\chi_{\beta} w|^2\diff t\\
=\int_{\R}|\langle D_t\rangle^{-\frac{1}{3}}\Tilde{\chi}_{\beta}\chi_{\beta} w+[\Tilde{\chi}_{\beta},\langle D_t\rangle^{-\frac{1}{3}}]\chi_{\beta} w+|^2\diff t\\
=\int_{\R}|\langle D_t\rangle^{-\frac{1}{3}}w+[\Tilde{\chi}_{\beta},\langle D_t\rangle^{-\frac{1}{3}}]\chi_{\beta} w|^2\diff t\\
=\int_{\R}|\langle D_t\rangle^{-\frac{1}{3}}w|^2\diff t+2\Re\int_{\R}\langle D_t\rangle^{-\frac{1}{3}}w\overline{[\Tilde{\chi}_{\beta},\langle D_t\rangle^{-\frac{1}{3}}]\chi_{\beta} w}\diff t+\int_{\R}|[\Tilde{\chi}_{\beta},\langle D_t\rangle^{-\frac{1}{3}}]\chi_{\beta} w|^2\diff t
\end{multline*}
Using Cauchy-Schwarz's inequality,
\begin{gather*}
\left|2\Re\int_{\R}\langle D_t\rangle^{-\frac{1}{3}}w\overline{[\Tilde{\chi}_{\beta},\langle D_t\rangle^{-\frac{1}{3}}]\chi_{\beta} w}\diff t\right|\leq 2\|w\|_{H^{-\frac{1}{3}}(\R)}\|[\Tilde{\chi}_{\beta},\langle D_t\rangle^{-\frac{1}{3}}]w\|_{L^2(\R)}\le CT^{-s_0\beta}\|w\|_{H^{-\frac{1}{3}}(\R)}\|w\|_{H^{-\frac{4}{3}}(\R)}\\
\le CT^{\frac{1}{6}-s_0\beta}(1+\ln{\frac{1}{T}})^{\frac{1}{2}}\|w\|^2_{H^{-\frac{1}{3}}(\R)}
\end{gather*}
Similarly,
\begin{equation*}
\int_{\R}|[\Tilde{\chi}_{\beta},\langle D_t\rangle^{-\frac{1}{3}}]\chi_{\beta} w|^2\diff t\le CT^{2(\frac{1}{6}-s_0\beta)}(1+\ln{\frac{1}{T}})\|w\|^2_{H^{-\frac{1}{3}}(\R)},
\end{equation*}
we obtain
\begin{multline*}
\int_{\R}|f|^2e^{-\ii  tp}\diff t=\int_{\R}|\Tilde{\chi}_{\beta}f|^2(1+\bigO(T^{\beta}))\diff t+\|w\|^2_{H^{-\frac{1}{3}}(\R)}\bigO(T^{2(\frac{1}{6}-s_0\beta)}(1+\ln{\frac{1}{T}}))\\
=\|w\|^2_{H^{-\frac{1}{3}}(\R)}\left(1+\bigO(T^{\beta})+\bigO(T^{\frac{1}{6}-s_0\beta}(1+\ln{\frac{1}{T}})^{\frac{1}{2}})+\bigO(T^{2(\frac{1}{6}-s_0\beta)}(1+\ln{\frac{1}{T}}))\right).
\end{multline*}
Since $\lim_{T\to0^+}T^{s_0\beta}(1+\ln{\frac{1}{T}})^{\frac{1}{2}}=0$, for any $\beta\in(0,1)$, we choose $\beta$ such that $\frac{1}{6}-2s_0\beta>0$, then
\[
\int_{\R}|f|^2e^{-\ii  tp}\diff t=\|w\|^2_{H^{-\frac{1}{3}}(\R)}\left(1+\bigO(T^{\beta})+\bigO(T^{\frac{1}{6}-2s_0\beta})+\bigO(T^{2(\frac{1}{6}-2s_0\beta)})\right).
\]
Therefore, we have
\[
\int_{\R} \Hat{u} (\tau) \overline{\Hat{u} (\tau - p)} \int_0^L B(\tau, x) \diff x \diff \tau =  E
|\alpha|^2\|w\|^2_{H^{-\frac{1}{3}}(\R)}\left(1+\bigO(T^{\beta})+\bigO(T^{\frac{1}{6}-2s_0\beta})+\bigO(T^{2(\frac{1}{6}-2s_0\beta)})+\bigO(T^{\frac{1}{30}})\right).
\]
Choosing $\beta$ sufficiently small, for example, $\beta=\frac{1}{100}$, we obtain:
\[
\int_{\R} \Hat{u} (\tau) \overline{\Hat{u} (\tau - p)} \int_0^L B(\tau, x) \diff x \diff \tau =  E
|\alpha|^2\|w\|^2_{H^{-\frac{1}{3}}(\R)}\left(1+\bigO(T^{\frac{1}{100}})\right).
\]
The conclusion follows by noting that
\[
 \|w (\tau)\|_{H^{-\frac{1}{3}}(\R)}^2  \ge C \int_{\R}
\frac{|\Hat{u}(\tau)|^2}{1 + |\tau|^{2}} \diff \tau,
\]
and  by normalizing $u$ such that $|\alpha| \| w \|_{L^2(\R)} = 1$.
\end{proof}
\subsection{The explicit construction of the trapping direction $\Psi$}\label{sec: construction trap direction}
Let $(k,l)\in\mathcal{S}_2$, and 
\begin{equation}\label{eq: defi-p-L}
L=2 \pi \sqrt{\frac{k^2 + k l + l^2}{3}}, \;p=\frac{(2k + l)(k-l)(2 l + k)}{3 \sqrt{3}(k^2 + kl + l^2)^{3/2}}.  
\end{equation}
 We solve the characteristic equation $\eta^3 + \eta +\ii p =0$ and the roots read as follows:
\begin{equation}\label{eq: eta-eigenvalue-eq}
\eta_1 = - \frac{2 \pi i}{3 L} (2k + l), \quad \eta_2 = \eta_1 + \frac{2\pi i}{L} k, \quad
\eta_3 = \eta_2 + \frac{2\pi i}{L} l.
\end{equation}
\begin{lemma} \label{lem: E-not-0}
Let $(k,l)\in\mathcal{S}_2$ and let $E$ be given by \eqref{eq: defi-E}
with $\eta_j$  in \eqref{eq: eta-eigenvalue-eq} and with $p,L$ in \eqref{eq: defi-p-L}. Then
\begin{equation*}
E =  -\frac{8\pi^3p}{9 L^2}  kl (k+l)\neq0.
\end{equation*}
\end{lemma}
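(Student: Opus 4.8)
The plan is a direct algebraic verification, starting from the explicit roots \eqref{eq: eta-eigenvalue-eq} and the definition \eqref{eq: defi-E} of $E$. First I would write $\eta_j = \frac{2\pi i}{3L}\, a_j$ with $a_1 = -(2k+l)$, $a_2 = k-l$, $a_3 = k+2l$ (so that $a_1+a_2+a_3=0$), and record the differences, which by \eqref{eq: eta-eigenvalue-eq} are $\eta_2-\eta_1 = \frac{2\pi i}{L}k$, $\eta_3-\eta_2 = \frac{2\pi i}{L}l$, $\eta_1-\eta_3 = -\frac{2\pi i}{L}(k+l)$. Since the common factor $\frac{2\pi i}{3L}$ cancels in each ratio $\frac{\eta_{j+1}-\eta_j}{\eta_{j+2}}$ (cyclic indices, $\eta_{j+3}=\eta_j$), one obtains
\[
\sum_{j=1}^3 \frac{\eta_{j+1}-\eta_j}{\eta_{j+2}} = \frac{3k}{k+2l} - \frac{3l}{2k+l} - \frac{3(k+l)}{k-l}.
\]

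Next I would put this over the common denominator $(2k+l)(k-l)(k+2l)$ and expand the numerator. The key structural point is that all cubic terms in $k,l$ cancel: this is exactly the identity $\sum_{j=1}^3 \eta_{j+2}^3(\eta_{j+1}-\eta_j)=0$ recorded in \eqref{eq: cancel-eta-eq}, valid because the $\eta_j$ solve $\eta^3+\eta+ip=0$. What remains is the numerator $-9\,kl(k+l)$, so that
\[
\sum_{j=1}^3 \frac{\eta_{j+1}-\eta_j}{\eta_{j+2}} = \frac{-27\,kl(k+l)}{(2k+l)(k-l)(k+2l)}, \qquad
E = -\frac{3\,p^2 L\,kl(k+l)}{(2k+l)(k-l)(k+2l)}.
\]
Then I would eliminate the denominator using the formula for $p$ in \eqref{eq: defi-p-L}, i.e. $(2k+l)(k-l)(k+2l) = 3\sqrt3\,(k^2+kl+l^2)^{3/2}\,p$, giving $E = -\frac{p\,L\,kl(k+l)}{\sqrt3\,(k^2+kl+l^2)^{3/2}}$, and finally substitute $k^2+kl+l^2 = \frac{3L^2}{4\pi^2}$ (equivalently $L = 2\pi\sqrt{(k^2+kl+l^2)/3}$); the prefactor $\frac{L}{\sqrt3(k^2+kl+l^2)^{3/2}}$ simplifies to $\frac{8\pi^3}{9L^2}$, yielding $E = -\frac{8\pi^3 p}{9L^2}\,kl(k+l)$, as claimed.

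For non-vanishing: since $k,l\in\N^*$ we have $kl(k+l)\geq 2 > 0$; and $(k,l)\in\mathcal{S}_2$ means $k\equiv l \pmod 3$ with $k\neq l$, so $2k+l$, $k-l$, $k+2l$ are all nonzero and hence $p\neq 0$ by \eqref{eq: defi of lambda_c}. Therefore $E\neq 0$. There is no substantive obstacle in this lemma; the only points needing care are keeping track of the cyclic index convention and confirming the cancellation of the cubic part of the numerator, which can be done either by brute expansion or by invoking \eqref{eq: cancel-eta-eq}.
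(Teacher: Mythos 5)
Your proof is correct and follows essentially the same direct computation as the paper: both reduce \eqref{eq: defi-E} to $\sum_{j=1}^3 \frac{\eta_{j+1}-\eta_j}{\eta_{j+2}} = \frac{3k}{k+2l} - \frac{3l}{2k+l} - \frac{3(k+l)}{k-l} = -\frac{27\,kl(k+l)}{(k+2l)(2k+l)(k-l)}$ and then substitute the explicit formulas for $p$ and $L$, so the two arguments are the same. One small inaccuracy in the side remark: the cancellation of the $k^3$ and $l^3$ monomials in that numerator is not the identity $\sum_j\eta_{j+2}^3(\eta_{j+1}-\eta_j)=0$ from \eqref{eq: cancel-eta-eq} --- that is a homogeneous degree-$4$ relation among the $\eta_j$, whereas the numerator $\sum_j (\eta_{j+1}-\eta_j)\eta_j\eta_{j+1}$ (after clearing the common denominator $\eta_1\eta_2\eta_3$) is homogeneous of degree $3$; the brute expansion you offer as the alternative is the correct justification, and it is what the paper implicitly uses.
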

\begin{proof} 
We compute directly
\begin{align*}
\sum_{j=1}^3  \frac{\eta_{j+1} - \eta_j}{\eta_{j+2}}
=  \frac{3k}{k + 2 l } - \frac{3 l }{2 k + l} - \frac{3 (k+l)}{k -l} = - \frac{2 7 k l
(k+l)}{(k+2l) (2 k + l) (k-l)}.
\end{align*}
We then have, by \eqref{eq: defi-E},
\begin{equation*}
E = \frac{1}{9}p^2L\left(-\frac{2 7
kl (k+l)}{(k -l) (k+ 2l) (2l + k)} \right)= -\frac{8\pi^3p}{9 L^2}  kl (k+l).
\end{equation*}
The proof is complete.
\end{proof}
Let $(k,l)\in\mathcal{S}_2$, $\eta_j$ be in \eqref{eq: eta-eigenvalue-eq} with $p,L$ in \eqref{eq: defi-p-L}. As a consequence of Lemma \ref{lem: E-not-0}, we obtain a nonzero direction $\Psi= \Psi_{(k, l)}$ defined as follows
\begin{equation}\label{eq: defi-Psi}
\Psi(t, x) = \Re(E) \Re \{ \varphi (x) e^{-ipt}\} + \Im (E) \Im \{ \varphi (x) e^{-ipt}\},
\end{equation}
which satisfies satisfies
the linear KdV system \eqref{eq: defi-trapping direction} with $\varphi$ defined in \eqref{eq: defi varphi}, as noted in \cite{Cerpa07}.

\vspace{3mm}

Equipped with Proposition \ref{prop: coercive-1/3norm}, we are able to give proof to Proposition \ref{prop: coercive property}.
\begin{proof}[Proof of Proposition \ref{prop: coercive property}]
By Lemma \ref{lem: B asymp}, let   $p \in \R$ and let $\varphi$ be defined by \eqref{eq: defi varphi}, where \eqref{eq: e^eta=1} holds,  $\eta_j \neq 0$, $\eta_j^3+\eta_j+ip=0$ for $j=1,
2, 3$. For $E \neq 0$, there exists $T_* > 0$ such that, for any  $u \in
L^2(0, + \infty)$ with $u(t) = 0$ for $t > T_*$ and $y(t,
\cdot) = 0$ for $t>T_*$ where $y$ is the unique solution of \eqref{eq: linearized controled kdv-0-0}. \\
Multiplying \eqref{eq: coercive-est-y} by  $\overline{E}$ and normalizing appropriately,
 we have
\begin{equation*}
\Re\int_{0}^{\infty} \int_0^L |y (t, x)|^2\overline{E} e^{-\ii pt }\varphi_x(x) \diff x \diff t=\int_{0}^{\infty} \int_0^L |y (t, x)|^2\p_x\Psi(t,x) \diff x \diff t.
\end{equation*}
Therefore, for $T_*$ sufficiently small,
\begin{equation*}
\int_{0}^\infty \int_0^{+\infty} y^2(t, x) \p_x\Psi(t, x) dx dt  \ge C \| u
\|_{H^{-1}(\R)}^2. 
\end{equation*}
We conclude with the estimate above.
\end{proof}

\section{Obstruction to the small time local null-controllability  of the KdV system}\label{sec: obstruction}

The main result of this section is the following, which implies in particular the Main Theorem.

\begin{theorem}\label{thm: not-small-time}
Let $(k,l)\in\mathcal{S}_2$, $\eta_j$ be in \eqref{eq: eta-eigenvalue-eq} with $p,L$ in \eqref{eq: defi-p-L}, and $\Psi$ be defined in \eqref{eq: defi-Psi}. There exists $\varepsilon_0 > 0$ such that for all $0< \varepsilon <\varepsilon_0$,  for all  $0 < T < T_*/2$, where $T_*$ is the constant in Proposition \ref{prop: coercive property} with $p$, $\eta_j$, and $L$ given previously, and for all  solutions $y \in C\left([0, + \infty); H^4(0, L)\right) \cap L^2_{loc}\left([0,+ \infty); H^5(0, L) \right) $  of
\begin{equation}\label{eq: kdv-psi-sys}
\left\{
\begin{array}{cl}
\p_t y  + \p_x^3y  + \p_x y  + y \p_xy  = 0 &  \mbox{ in } (0, + \infty)
\times  (0, L), \\
y(t, 0) = y(t, L) = 0 & \mbox{ in }  (0, + \infty), \\
\p_xy(t ,  L) = u(t) & \mbox{ in } (0, \infty),  \\
y(0, \cdot) = y_0 (x) : = \varepsilon \Psi(0, \cdot),
\end{array}\right.
\end{equation}
with $u \in H^{\frac{4}{3}}(\R_+)$,  $\| u \|_{H^{\frac{4}{3}}(\R_+)} < \varepsilon_0$, $u(0) = 0$,  and $\supp
u \subset [0, T]$, we have
\[
y(T, \cdot) \neq 0,\]
i.e., the small-time null controllability fails.
\end{theorem}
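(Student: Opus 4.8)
The plan is to argue by contradiction, using the trapping direction $\Psi$ simultaneously as the perturbed initial datum and as a test function, following the scheme of Coron--Koenig--Nguyen \cite{CKN} (itself inspired by Marbach \cite{M18}). Suppose $y(T,\cdot)\equiv0$. Since $\supp u\subset[0,T]$, extending $u$ by zero and invoking the well-posedness and uniqueness of Lemma \ref{lem: kdv-NL}, we get $y(t,\cdot)\equiv0$ for all $t\ge T$. Multiplying \eqref{eq: kdv-psi-sys} by $\Psi$ and integrating over $(0,\infty)\times(0,L)$: since $\Psi$ solves \eqref{eq: defi-trapping direction} with $\Psi(t,0)=\Psi(t,L)=\p_x\Psi(t,0)=\p_x\Psi(t,L)=0$ and $y(t,0)=y(t,L)=0$, integration by parts in $x$ and $t$ annihilates every linear term and \emph{every} boundary contribution --- in particular $u$ disappears --- and leaves the exact identity
\begin{equation}\label{eq: plan-master}
\int_0^\infty\!\!\int_0^L y(t,x)^2\,\p_x\Psi(t,x)\,\diff x\,\diff t \;=\; -\,2\varepsilon\,\|\Psi(0,\cdot)\|_{L^2(0,L)}^2 .
\end{equation}
By Lemma \ref{lem: E-not-0}, $E\neq0$, so $\Psi\not\equiv0$ and $c_0:=\|\Psi(0,\cdot)\|_{L^2(0,L)}^2$ is a fixed positive constant; hence the left-hand side of \eqref{eq: plan-master} equals the negative number $-2c_0\varepsilon$, and the goal is to derive a contradiction by showing that it is in fact $\ge-o(\varepsilon)$.

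To that end we decompose $y=\varepsilon\Psi+z+r$, where $z$ solves the linearized system \eqref{eq: linearized controled kdv-0-0} with control $u$ and zero data, and $r:=y-\varepsilon\Psi-z$ solves the linearized KdV with zero control, zero data and interior source $-\tfrac12\p_x(y^2)$. Lemmas \ref{lem: kdv-NL} and \ref{lem: inhomogenous-kdv}, together with $y\equiv0$ past $T$, give $\|y\|_{L^2((0,T)\times(0,L))}\lesssim\varepsilon+\|u\|_{H^{-1/3}(\R)}$, $\|z\|_{L^2((0,T)\times(0,L))}\lesssim\|u\|_{H^{-1/3}(\R)}$ and $\|r\|_{L^2((0,T)\times(0,L))}\lesssim\|y\,\p_xy\|_{L^1}\lesssim(\varepsilon+\|u\|_{L^2})^2$. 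Since a linear solution issued from $0$ can never enter $M$ \cite{Rosier97}, projecting $\varepsilon\Psi(T)+z(T)+r(T)=0$ onto $M$ and $H$ forces $z(T)=-[r(T)]_H$, so $\|z(T)\|_{L^2(0,L)}\lesssim(\varepsilon+\|u\|_{L^2})^2$. Using the controllability of the linear system on the reachable subspace $H$ \cite{Rosier97}, we steer $z(T)$ to $0$ over a short interval $[T,T+\sigma]$, with $T+\sigma<T_*$, by a correcting Neumann control $\widehat u$ of size $\lesssim(\varepsilon+\|u\|_{L^2})^2$; the associated linear solution $\widetilde z$ (control $u+\widehat u$, zero data) coincides with $z$ on $[0,T]$ and vanishes for $t>T+\sigma$, so Proposition \ref{prop: coercive property} applies and gives $\int_0^\infty\!\int_0^L\widetilde z^2\,\p_x\Psi\ge C\|u+\widehat u\|_{H^{-1}(\R)}^2$. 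Writing $\int_0^T\!\int z^2\p_x\Psi=\int_0^\infty\!\int\widetilde z^2\p_x\Psi-\int_T^\infty\!\int\widetilde z^2\p_x\Psi$ and controlling the quadratically small tail and $\widehat u$-corrections, this yields
\begin{equation*}
\int_0^T\!\!\int_0^L z(t,x)^2\,\p_x\Psi(t,x)\,\diff x\,\diff t\ \ge\ \tfrac{C}{2}\,\|u\|_{H^{-1}(\R)}^2-C'(\varepsilon+\|u\|_{L^2})^4 .
\end{equation*}

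Plugging $y=\varepsilon\Psi+z+r$ into \eqref{eq: plan-master} (all integrals over $(0,T)$ since $y\equiv0$ afterwards), using $\int_0^L\Psi^2\p_x\Psi\,\diff x=\tfrac13[\Psi^3]_0^L=0$, and bounding the cross terms $2\varepsilon\!\int\!\Psi z\,\p_x\Psi$, $2\varepsilon\!\int\!\Psi r\,\p_x\Psi$, $2\!\int\!zr\,\p_x\Psi$, $\int r^2\p_x\Psi$ with the estimates above, we obtain
\begin{multline*}
-2c_0\varepsilon\ \ge\ \tfrac{C}{2}\,\|u\|_{H^{-1}(\R)}^2-C'(\varepsilon+\|u\|_{L^2})^4\\
-C''\Big(\varepsilon\,\|u\|_{H^{-1/3}(\R)}+\|u\|_{H^{-1/3}(\R)}(\varepsilon+\|u\|_{L^2})^2+(\varepsilon+\|u\|_{L^2})^4\Big).
\end{multline*}
A routine interpolation of the control norms in the chain $H^{-1}(\R)\subset H^{-1/3}(\R)\subset H^{4/3}(\R)$, combined with $\|u\|_{H^{4/3}(\R)}<\varepsilon_0$ and Young's inequality, absorbs every term on the right except $\tfrac{C}{2}\|u\|_{H^{-1}}^2$ into $\tfrac{C}{4}\|u\|_{H^{-1}(\R)}^2+C\varepsilon^{1+\theta}$ for some $\theta>0$, provided $\varepsilon_0$ is small (the smallness of $T<T_*/2$ and the $\bigO(T^{1/100})$ slack of Proposition \ref{prop: coercive-1/3norm} also enter here). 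Hence $-2c_0\varepsilon\ge\tfrac{C}{4}\|u\|_{H^{-1}}^2-C\varepsilon^{1+\theta}\ge-C\varepsilon^{1+\theta}$, i.e.\ $c_0\le C\varepsilon^{\theta}$, which is impossible once $\varepsilon\le\varepsilon_0$ is small enough. This proves $y(T,\cdot)\neq0$, and the Main Theorem follows: for $\delta>0$ choose $\varepsilon$ so small that $y_0:=\varepsilon\Psi(0,\cdot)$ --- a smooth function with $y_0(0)=y_0(L)=0$, hence in $H^4(0,L)\cap H^1_0(0,L)$ --- satisfies $\|y_0\|_{H^4}<\delta$; the compatibility $u(0)=y_0'(L)=\varepsilon\,\p_x\Psi(0,L)=0$ holds, so with $T_0:=T_*/2$ and the higher-regularity well-posedness of Section \ref{sec: well-posedness} we are done.

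\emph{Expected main obstacle.} The crux is the passage from the purely linear coercive inequality \eqref{eq: coercive inequality} to the nonlinear functional in \eqref{eq: plan-master}. The coercive gain lives only at the weak scale $H^{-1}(\R)$ --- a consequence of the \emph{degenerate} decay $\int_0^LB(\tau,x)\,\diff x=E|\tau|^{-2}+\bigO(|\tau|^{-7/3})$ of Lemma \ref{lem: B asymp}, which is itself forced by the Type 2 eigenmodes present when $2k+l\in3\N^*$ --- whereas the nonlinear cross and remainder terms naturally sit at $H^{-1/3}(\R)$ or in $L^2$. Reconciling these scales is precisely why the statement is phrased with $H^{4/3}$ controls and $H^4$ data; the interpolation against $\|u\|_{H^{4/3}}<\varepsilon_0$ is what closes the estimate. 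A secondary technical point, handled by the quadratically small correction $\widehat u$, is that the linear part $z$ does not return to zero at time $T$, so Proposition \ref{prop: coercive property} cannot be applied to $z$ directly.
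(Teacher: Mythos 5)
Your outline reproduces the architecture of the paper's argument — contradiction hypothesis $y(\cdot,T)=0$, multiplication by $\Psi$ to obtain the master identity $\int_0^\infty\!\int_0^L y^2\,\partial_x\Psi = -2\varepsilon\,\|\Psi(0,\cdot)\|_{L^2}^2$, passage to a corrected linear solution vanishing before $T_*$ so that Proposition \ref{prop: coercive property} applies, and interpolation against $\|u\|_{H^{4/3}}<\varepsilon_0$ to close. Your decomposition $y=\varepsilon\Psi+z+r$ is a legitimate variant of the paper's $y\mapsto y_1=y_2+y_3$, and collapsing the conclusion into a single inequality $c_0\le C\varepsilon^\theta$ (rather than first forcing $u=0$ and then running a separate $L^2$-energy estimate on $y-\varepsilon\Psi$) is a reasonable streamlining, \emph{provided the cross-term estimates close}. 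They do not, as written.

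The problem is the norm pairing you use for the cubic cross term. You bound $\bigl|\int\!\!\int z\,r\,\partial_x\Psi\bigr|\lesssim\|z\|_{L^2_{t,x}}\|r\|_{L^2_{t,x}}\lesssim\|u\|_{H^{-1/3}(\R)}\bigl(\varepsilon+\|u\|_{L^2}\bigr)^2$, and then claim that "a routine interpolation in the chain $H^{-1}\subset H^{-1/3}\subset H^{4/3}$" absorbs the piece $\|u\|_{H^{-1/3}}\|u\|_{L^2}^2$ into $\tfrac{C}{4}\|u\|_{H^{-1}}^2+C\varepsilon^{1+\theta}$. Compute the exponents: $\|u\|_{H^{-1/3}}\lesssim\|u\|_{H^{-1}}^{5/7}\|u\|_{H^{4/3}}^{2/7}$ and $\|u\|_{L^2}^2\lesssim\|u\|_{H^{-1}}^{8/7}\|u\|_{H^{4/3}}^{6/7}$, so the product is $\lesssim\|u\|_{H^{-1}}^{13/7}\|u\|_{H^{4/3}}^{8/7}$. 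Since $13/7<2$, any Young-type absorption produces a leftover constant depending only on $\varepsilon_0$ (e.g.\ $\lesssim\delta\|u\|_{H^{-1}}^2+C_\delta\varepsilon_0^{16}$), not an $O(\varepsilon^{1+\theta})$ term. Your final inequality then reads $2c_0\varepsilon\le C\varepsilon^{1+\theta}+C\varepsilon_0^{16}$, which is \emph{not} a contradiction when $\varepsilon\ll\varepsilon_0^{16}$ — and the theorem has to hold for all $\varepsilon<\varepsilon_0$. The paper avoids this by pairing the $H^{-1}(0,L)$-in-space estimate of Lemma \ref{lem: kdv-NL}, which brings in $\|u\|_{H^{-2/3}(\R)}$ rather than $\|u\|_{H^{-1/3}(\R)}$, against the $H^1(0,L)$ side; the resulting cubic term $\|u\|_{H^{-2/3}}\|u\|_{L^2}^2\lesssim\|u\|_{H^{-1}}^{6/7+8/7}\|u\|_{H^{4/3}}^{1/7+6/7}=\|u\|_{H^{-1}}^2\|u\|_{H^{4/3}}\le\varepsilon_0\|u\|_{H^{-1}}^2$ is exactly quadratic in $\|u\|_{H^{-1}}$ and absorbs cleanly. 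This choice of dual norms is the essential point, not a routine detail.

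Two smaller issues. First, to steer $z(T)$ to zero you need the \emph{pointwise}-in-time bound $\|z(T)\|_{L^2(0,L)}\lesssim(\varepsilon+\|u\|)^2$, which does not follow from the $L^2_{t,x}$ estimates alone; you must invoke the Kato smoothing part of Lemma \ref{lem: inhomogenous-kdv}, which only takes effect strictly after the source and control are switched off, i.e.\ at some $T+\delta$ with $\delta>0$ fixed. The paper's choice of the steering window $[2T_*/3,T_*]$, which is disjoint from $[0,T]$ since $T<T_*/2$, is what makes this estimate available. Second, before using it, note that $\Psi$ of \eqref{eq: defi-Psi} solves the homogeneous system \eqref{eq: defi-trapping direction} only up to the sign convention the paper adopts ($\varphi'''+\varphi'+ip\varphi=0$ paired with the phase $e^{-ipt}$), so one must make sure the same convention is used in both the master identity and in $Q_M$; your identity is consistent with the paper's, so this is merely a consistency check, not an error.

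In short: same strategy, a workable alternative decomposition, a genuine and streamlined one-step contradiction — but the interpolation step as stated is false because you chose $H^{-1/3}$ where the argument requires $H^{-2/3}$, and the $z(T)$ size estimate needs the smoothing delay.
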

\begin{proof}Before we present our proof, we recall the negative Sobolev norm defined in \eqref{eq: defi-negative-norm} and the fractional Sobolev norm in Appendix \ref{sec: Sobolev–Slobodeckij spaces}. We point out that all controls we use in this proof are \`a priori defined in $L^2(\R_+)$. Consequently, their negative Sobolev norms are both well-defined and finite. Regarding the fractional Sobolev norm, it is applied only to $u\in H^{\frac{4}{3}}(\R_+)$ and its zero extension, which is guaranteed by Lemma \ref{lem: fractional-sobolev} and Corollary \ref{cor: s-Sobolev} in Appendix \ref{sec: Sobolev–Slobodeckij spaces}.

\vspace{3mm}
By Lemma \ref{lem: E-not-0}, the constant $E$ is not $0$.  Let
$\varepsilon_0$ be a small positive constant, which depends only on $k$ and $l$ and is determined
later.  We prove Theorem \ref{thm: not-small-time} by contradiction.    Assume that there exists a solution $y
\in C\left([0, + \infty); H^4(0, L) \right) \cap L^2_{loc}\left([0, + \infty); H^5(0, L)
\right) $ of
\eqref{eq: kdv-psi-sys}  with $y(t, \cdot) =0$ for $t \ge T$,  for some $u \in H^{\frac{4}{3}}(0, +
\infty)$, for some $0< \varepsilon < \varepsilon_0$, and for some $0< T < T_*/2$  with   $\| u
\|_{H^{\frac{4}{3}}(\R_+)} < \varepsilon_0$, $u(0) = 0$,  and $\supp u \subset [0, T]$.

We adapt the approach introduced in \cite[Proof of Theorem 5.1]{CKN}, concerning the obstruction to small-time controllability. We introduce 
\begin{equation}\label{eq: def-y1}
y_1 (t, x) = y  (t, x) -  c \int_{0}^L y(t, \eta) \Psi(t, \eta) \diff  \eta \,   \Psi(t,
x),
\end{equation}
with a normalized constant $c^{-1}  := \int_0^L |\Psi(0, \eta)|^2 \diff  \eta $. It is easy to see that $y_1(0,x)=0$ for $x\in(0,L)$ and $y_1$ satisfies \eqref{eq: general linear kdv} with $h_1=h_2\equiv0$, $h_3(t)=u(t)$, and 
\[
f(t, x) = f_{1} (t, x) +\p_x f_2 (t, x),
\]
with
\[
f_1(t, x)  =
\frac{c}{2} \int_0^L y^2(t, \eta) \p_x\Psi (t, \eta) \diff  \eta \,   \Psi(t, x),\quad f_2(t, x) = \frac{1}{2} y^2 (t, x).
\]
Thanks to Lemma \ref{lem: kdv-NL}, we obtain
\begin{gather*}
\| y \|_{L^2\left( (0, T) \times (0, L) \right)}  \le C \left(\| y_0\|_{L^2(0, L)} + \| u
\|_{H^{-\frac{1}{3}}(\R)} \right),\\
\|y\|_{L^2\left( (0, T); H^{-1} (0, L) \right)} + \|y_1\|_{L^2\left( (0, T); H^{-1} (0, L) \right)}\le C\left(\| y_0 \|_{L^2(0, L)} + \|
u\|_{H^{-\frac{2}{3}} (\R)} \right).
\end{gather*}
Here we emphasize that this constant $C>0$ is independent of the solution $y$ and the control $u$. In this proof, to simplify the notation, we will use $C$ to denote a strictly positive constant that is independent of the solution $y$ and the control $u$. $C$ may change from line to line.

\vspace{3mm}
Now we introduce two functions $y_2$ and $y_3$, satisfying a decomposition $y_1=y_2+y_3$.
Let $y_2 \in C\left([0, + \infty); L^2(0, L) \right) \cap L^2_{loc}\left([0, + \infty);
H^1(0, L) \right)$ be the unique solution of \eqref{eq: general linear kdv} with $h_1=h_2=h_3\equiv0$ and the same $f=f_1+\p_x f_2$ defined above. Let $y_3 \in C\left([0, + \infty); L^2(0, L) \right) \cap L^2_{loc}\left([0, + \infty);
H^1(0, L) \right)$ be the unique solution of \eqref{eq: general linear kdv} with $h_1=h_2\equiv0$, $h_3(t)=u(t)$ and $f=0$.

There exists $u_4 \in L^2(0, +\infty)$  such that $\supp{u_4} \subset [2T_*/3, T_*]$,
\begin{equation*}
\| u_4\|_{L^2(0, + \infty)} \le C \| y_3(2T_*/3, \cdot)\|_{L^2(2T_*/3, T_*)},\;y_4(T_*, \cdot ) = 0,
\end{equation*}
where $y_4 \in C\left([0, + \infty); L^2(0, L) \right) \cap L^2_{loc}\left([0, + \infty);
H^1(0, L) \right)$ is the unique solution of
\begin{equation*}\left\{
\begin{array}{cl}
\p_t y_4+ \p_x^3y_4 +\p_x y_4   = 0&  \mbox{ in } (2T_*/3, +
\infty) \times (0, L), \\
y_4 (t, 0) = y_4(t, L) = 0 & \mbox{ in } (2T_*/3, +\infty), \\
\p_xy_4 (t , L) = u_4 (t) & \mbox{ in } (2T_*/3, +\infty), \\
y_4(T_*/2, \cdot) =  y_3(2T_*/3, \cdot).
\end{array}\right.
\end{equation*}
Such an $u_4$ exists since $y_3(2T_*/3, \cdot)$ is generated from zero at time $0$, see
\cite{Rosier97}. For $u_4$, applying Lemma \ref{lem: kdv-NL}, we obtain
\begin{align*}
\| u_4\|_{L^2(0, + \infty)} &\le  C \| y_2(2T_*/3, \cdot )\|_{L^2(0, L)}\\
&\le C \min \left\{ \| y \|_{L^2 \left( (0, T)\times (0, L)\right)}^2, \| y\|_{L^2\left( (0,
T); H^1(0, L) \right)}  \| y\|_{L^2\left( (0, T); H^{-1}(0, L) \right)} \right\} \\
&\leq C  \min \left\{\left(\| y_0\|_{L^2(0, L)} + \| u \|_{H^{-\frac{1}{3}}(\R)} \right)^2, \varepsilon_0 \left(\| y_0\|_{L^2(0,L)} + \| u \|_{H^{-\frac{2}{3}}(\R)} \right)  \right\}
\end{align*}

Let $\Tilde{y} \in C\left([0, + \infty); L^2(0, L) \right) \cap L^2_{loc}\left([0, + \infty);
H^1(0, L) \right)$ be the unique solution of \eqref{eq: general linear kdv} with $h_1=h_2\equiv0$, $h_3(t)=u_4(t)+u(t)$, and $f=0$. Then, by the choice of $u_4$,
\[
\Tilde{y}(t, \cdot) = 0 \mbox{ for } t \ge T_*.
\]
Multiplying the equation of $y$ with $\Psi(t, x)$, integrating by parts on $[0,T]\times[0, L]$, 
\begin{equation*}
\int_{0}^L y_0 (x)  \Psi(0, x) \diff x  + \frac{1}{2} \int_0^T \int_0^L y^2 (t, x)
\p_x\Psi(t, x) \diff x \diff t  = 0.
\end{equation*}
Considering the systems for $y-y_1$ and $y_1-\Tilde{y}$, we have
\begin{align*}
&\left| \int_{0}^{T} \int_0^L y^2 (t, x) \p_x\Psi (t, x)  \diff x \diff t  - \int_{0}^{T}
\int_0^L y_1^2 (t, x) \p_x\Psi (t, x)  \diff x \diff t  \right|  \\
\leq & C \| y - y_1 \|_{L^2( (0, T); H^1(0, L) )}  \| (y, y_1) \|_{L^2( (0, T);
H^{-1} (0, L) )}\\
\leq &C \varepsilon_0 \| y_0 \|_{L^2(0, L)}  + C \left(\| y_0 \|_{L^2(0, L)} + \| u\|_{H^{-\frac{2}{3}}
(\R)} \right)  \left(\| y_0 \|_{L^2(0, L)} + \| u\|_{H^{-\frac{1}{3}} (\R)} \right)^2 .
\end{align*}
Similarly,  we have
\begin{align*}
\left| \int_{0}^{+ \infty} \int_0^L y_1^2 (t, x) \p_x\Psi (t, x)  \diff x \diff t  -
\int_{0}^{+\infty} \int_0^L \Tilde{y}^2 (t, x) \p_x\Psi (t, x)  \diff x \diff t  \right|  \\
\leq C\left( \| y_0 \|_{L^2(0, L)} + \| u\|_{L^2(\R_+)} \right)^2 \left( \| y_0 \|_{L^2(0,
L)} + \| u\|_{H^{-\frac{2}{3}}(\R)} \right).
\end{align*}
As a consequence, we obtain
\begin{multline*}
\left| \int_{0}^{T} \int_0^L y^2 (t, x) \p_x\Psi (t, x)  \diff x \diff t  -
\int_{0}^{+\infty} \int_0^L \Tilde{y}^2 (t, x) \p_x\Psi (t, x)  \diff x \diff t  \right| \\
\le C \varepsilon_0 \| y_0 \|_{L^2(0, L)}  + C \left(\| y_0 \|_{L^2(0, L)} + \| u\|_{H^{-\frac{2}{3}}
(\R)} \right) \left(\| y_0 \|_{L^2(0, L)} + \| u\|_{L^2 (\R_+)} \right)^2.
\end{multline*}
On the other hand, from Proposition \ref{prop: coercive property} and the choice of $y_0$, we have
\begin{multline*}
 \int_{0}^L y_0 (x)  \Psi(0, x) \diff x + \frac{1}{2}  \int_{0}^{+\infty} \int_0^L \Tilde{y}^2
(t, x) \p_x\Psi (t, x)  \diff x \diff t 
\ge C \left( \|y_0\|_{L^2(0, L)} +  \| u + u_4\|_{H^{-1}(\R)}^2 \right).
\end{multline*}
Indeed, we know that
\[
\| u + u_4\|_{H^{-1}(\R)}^2 \ge C  \| u\|_{H^{-1}(\R)}^2   - C \| u_4
\|_{L^2(\R)}^2
\geq C   \| u\|_{H^{-1}(\R)}^2-  C \left(\| y_0 \|_{L^2(0,
L)} + \| u\|_{H^{-\frac{1}{3}} (\R)} \right)^4,
\]
Therefore, we derive
\begin{multline*}
 \int_{0}^L y_0 (x)  \Psi(0, x) \diff x + \frac{1}{2}  \int_{0}^{\infty} \int_0^L \Tilde{y}^2
(t, x) \p_x\Psi (t, x)  \diff x \diff t
 \ge C \left( \|y_0\|_{L^2(0, L)} +  \| u\|_{H^{-1}(\R)}^2 \right) - C \| u\|_{H^{-\frac{1}{3}}
(\R)}^4 .
\end{multline*}
As a consequence, we derive the following inequality.
\begin{align*}
&C \varepsilon_0 \| y_0 \|_{L^2(0, L)}  + C \left(\| y_0 \|_{L^2(0, L)} + \| u\|_{H^{-\frac{2}{3}} (\R)}
\right) \left(\| y_0 \|_{L^2(0, L)} + \| u\|_{L^2 (\R_+)} \right)^2 \\
\geq &C \left( \|y_0\|_{L^2(0, L)} +  \| u\|_{H^{-1}(\R)}^2 - C \| u\|_{H^{-\frac{1}{3}}(\R)}^4
\right). 
\end{align*}
It follows that, if $\varepsilon_0$ is fixed but sufficiently small,
\begin{equation}\label{eq: contradiction-inequality}
 \| u\|_{H^{-\frac{1}{3}} (\R)}^4 + \| u\|_{H^{-\frac{2}{3}} (\R)} \| u\|_{L^2 (\R_+)}^2 \ge C \|
u\|_{H^{-1}(\R)}^2.
\end{equation}

By the interpolation of Sobolev norms, we have
\begin{equation*}
\| u \|_{H^{-\frac{1}{3}}(\R)}^2 \le C  \| u\|_{H^{\frac{1}{3}}(\R)} \| u\|_{H^{-1}(\R)} \textcolor{black}{\le C \varepsilon_0 \|u\|_{H^{-1}(\R)}},
\end{equation*}
and
\begin{equation}\label{eq: interpolation-2-1}
\| u \|_{L^{2}(\R)}^2 \le C  \| u\|^{8/7}_{H^{-1}(\R)} \| u\|^{6/7}_{H^{\frac{4}{3}}(\R)},
\end{equation}
\begin{equation*}
\| u \|_{H^{-\frac{2}{3}}(\R)} \le C  \| u\|^{6/7}_{H^{-1}(\R)} \| u\|^{1/7}_{H^{\frac{4}{3}}(\R)},
\end{equation*}
Recall that we extended $u$ by 0 for $t < 0$. Thanks to Lemma \ref{lem: fractional-sobolev} and Corollary \ref{cor: s-Sobolev}, using the definition of fractional Sobolev spaces (see Appendix \ref{sec: Sobolev–Slobodeckij spaces} for proof details), we know that $u\in H^{\frac{4}{3}}(\R)$. Moreover, we have the following estimates:    
\begin{equation}\label{eq: interpolation-2-2}
\begin{aligned}
\| u \|_{H^{\frac{4}{3}}(\R)} &\le C \| u \|_{H^{\frac{4}{3}}(\R_+)},\\
\| u \|_{H^{\frac{1}{3}}(\R)} &\le C \| u \|_{H^{\frac{1}{3}}(\R_+)}
\end{aligned}
\end{equation}
Here this constant $C$ is independent of $u$.

Therefore, we obtain 
\begin{equation*}
\| u\|_{H^{-\frac{1}{3}} (\R)}^4 +\| u\|_{H^{-\frac{2}{3}} (\R)} \| u\|_{L^2 (\R_+)}^2\leq C\left(\| u \|_{H^{\frac{1}{3}}(\R)}^2+\| u \|_{H^{\frac{4}{3}}(\R)}\right)\| u\|^2_{H^{-1}(\R)}\leq C\left(\varepsilon_0^2+\varepsilon_0\right)\| u\|^2_{H^{-1}(\R)}
\end{equation*}

Therefore, we derive from
\eqref{eq: contradiction-inequality} that, $\|u\|_{H^{-1}(\R)}^2 \leq C\varepsilon_0^2 \|u\|_{H^{-1}(\R)}^2 + 
C\varepsilon_0\|u\|_{H^{-1}(\R)}^{2}$. So, for fixed sufficiently small $\varepsilon_0$,
\[
u =0.
\]
As a consequence, we know that $Y=y-\varepsilon\Psi$ is a solution to 
\begin{equation*}
\left\{
\begin{array}{cl}
\p_t Y  + \p_x^3Y  + \p_x Y  + Y \p_xY  =\varepsilon^2\Psi\p_x\Psi+\varepsilon\Psi\p_xy+\varepsilon y\p_x\Psi  &  \mbox{ in } (0, + \infty)
\times  (0, L), \\
y(t, 0) = y(t, L) = 0 & \mbox{ in }  (0, + \infty), \\
\p_xy(t ,  L) = 0 & \mbox{ in } (0, \infty),  \\
y(0, \cdot) = 0,
\end{array}\right.
\end{equation*}
Therefore, by energy estimates, we derive that
\begin{align*}
\|Y(t,\cdot)\|_{L^2(0,L)}&\leq C\|\varepsilon^2\Psi\p_x\Psi+\varepsilon\Psi\p_xy+\varepsilon y\p_x\Psi \|_{L^2((0,T)\times(0,L))}\\
&\leq C\left(\varepsilon^2+\varepsilon\left(\|\p_xy\|_{L^2((0,T)\times(0,L))}+\|y\|_{L^2((0,T)\times(0,L))}\right)\right).
\end{align*}
Due to Lemma \ref{lem: kdv-NL},
\[
\|y\|_{L^2((0,T);H^1(0,L))}\leq C\varepsilon\|\Psi(0,\cdot)\|_{L^2(0,L)}\leq C\varepsilon.
\]
Finally, in particular, we obtain the following estimate\footnote{This can be seen as a ``trapping" estimate: if $u=0$, then for small time, the solution $y$ to \eqref{eq: kdv-psi-sys} is very close to $\Psi$.}:
\[
\|y(T, \cdot) - \varepsilon \Psi(T, \cdot) \|_{L^2(0, L)} \le C \varepsilon^2. 
\]
If $\varepsilon_0$ is sufficiently small, this implies that $y(T,\cdot)\neq0$, which is a contradiction.  The proof is complete.
\end{proof}

\vspace{3mm}

\noindent {\bf Acknowledgements} \; 
The authors would like to thank Yuxuan Chen for valuable discussions.
 Shengquan Xiang is partially supported by NSF of China 12301562 and by “The Fundamental Research Funds for the Central Universities, 7100604200, Peking University”. Jingrui Niu is supported by Defi Inria EQIP. Part of this work was done when Jingrui Niu was visiting Peking University. They would like to thank the hospitality and the financial support of these institutions.

\appendix
\section{}
\subsection{Commutator estimates}\label{sec: commutator}
In this section, we provide a proof of Corollary \ref{cor: commutator-est}. 
\begin{proof}[Proof of Corollary \ref{cor: commutator-est}]
For any $f\in \mathcal{S}(\R)$, using the quantization formula, 
\begin{gather*}
(\langle D_x\rangle^{-s}\varrho_Rf)(x)=\frac{1}{2\pi}\int_{\R^2}e^{\ii (x-y)\xi}(1+\xi^2)^{-\frac{s}{2}}\varrho(\frac{y}{R})f(y)\diff y\diff \xi,\\
(\varrho_R\langle D_x\rangle^{-s}f)(x)=\frac{1}{2\pi}\int_{\R^2}e^{\ii (x-y)\xi}\varrho(\frac{x}{R})(1+\xi^2)^{-\frac{s}{2}}f(y)\diff y\diff \xi.
\end{gather*}
By definition, the commutator $[\langle D_x\rangle^{-s},\varrho_R]$ acting on $f$ is given by
\begin{align*}
\left([\langle D_x\rangle^{-s},\varrho_R]f\right)(x)=&\frac{1}{2\pi}\int_{\R^2}e^{\ii (x-y)\xi}(1+\xi^2)^{-\frac{s}{2}}\left(\varrho(\frac{y}{R})-\varrho(\frac{x}{R})\right)f(y)\diff y\diff \xi\\
=&\frac{1}{2\pi}\int_{\R^2}e^{\ii (x-y)\xi}(1+\xi^2)^{-\frac{s}{2}}\left(\int_0^1\frac{y-x}{R}\varrho'(\frac{\theta y+(1-\theta)x}{R})\diff\theta\right)f(y)\diff y\diff \xi\\
=&-\frac{1}{2\ii\pi}\int_{\R^2}\p_{\xi}(e^{\ii (x-y)\xi})(1+\xi^2)^{-\frac{s}{2}}\left(\int_0^1\frac{1}{R}\varrho'(\frac{\theta y+(1-\theta)x}{R})\diff\theta\right)f(y)\diff y\diff \xi.
\end{align*}
Integrating by parts in $\xi$ variable, we obtain
\begin{align*}
\left([\langle D_x\rangle^{-s},\varrho_R]f\right)(x)=&\frac{1}{2\ii\pi}\int_{\R^2}e^{\ii (x-y)\xi}\p_{\xi}(1+\xi^2)^{-\frac{s}{2}}\left(\int_0^1\frac{1}{R}\varrho'(\frac{\theta y+(1-\theta)x}{R})\diff\theta\right)f(y)\diff y\diff \xi\\
=&\frac{s}{2\ii\pi R}\int_{\R^2}e^{\ii (x-y)\xi}\frac{\xi}{(1+\xi^2)^{\frac{1}{2}}}(1+\xi^2)^{\frac{-s-1}{2}}\left(\int_0^1\varrho'(\frac{\theta y+(1-\theta)x}{R})\diff\theta\right)f(y)\diff y\diff \xi.
\end{align*}
We only need to show that there exists a constant $C>0$ such that
\[
\|\langle D_x\rangle^{1+s}[\langle D_x\rangle^{-s},\varrho_R]f\|_{L^2}\leq C\|f\|_{L^2}.
\]
We notice that 
\[
\langle D_x\rangle^{1+s}[\langle D_x\rangle^{-s},\varrho_R]f(x)=\frac{s}{4\ii\pi^2 R}\int_{\R^2}K(x,z)f(z)\diff z\diff \xi,
\]
where
\[
K(x,z)=\int_{\R^3}e^{\ii\left((x-y)\xi+(y-z)\eta\right)}\langle\xi\rangle^{s+1}\langle\eta\rangle^{-s-2}\eta\left(\int_0^1\varrho'(\frac{\theta z+(1-\theta)y}{R})\diff\theta\right)\diff y\diff\eta\diff\xi.
\]
We apply Schur's test, and it suffices to check 
\[
\sup_{x}\int_{\R}|K(x,z)|\diff z<\infty,\mbox{ and }\sup_{z}\int_{\R}|K(x,z)|\diff x<\infty.
\]
Without loss of generality, we show that $\sup_{x}\int_{\R}|K(x,z)|\diff z<\infty$.
\begin{gather*}
|K(x,z)|= |\int_{\R^3}\langle D_{\eta}\rangle^6e^{\ii\left((x-y)\xi+(y-z)\eta\right)}\langle\xi\rangle^{s+1}\langle\eta\rangle^{-s-2}\eta\frac{\left(\int_0^1\varrho'(\frac{\theta z+(1-\theta)y}{R})\diff\theta\right)}{(1+(y-z)^2)^3}\diff y\diff\eta\diff\xi|.
\end{gather*}
Integrating by parts in $\eta-$variable, we obtain
\begin{gather*}
|K(x,z)|= |\int_{\R^3}e^{\ii\left((x-y)\xi+(y-z)\eta\right)}\langle\xi\rangle^{s+1}\langle D_{\eta}\rangle^6\langle\eta\rangle^{-s-2}\eta\frac{\left(\int_0^1\varrho'(\frac{\theta z+(1-\theta)y}{R})\diff\theta\right)}{(1+(y-z)^2)^3}\diff y\diff\eta\diff\xi|.
\end{gather*}
Applying Peetre's inequality (in Lemma \ref{eq: peetre}), we know
\[
\frac{1+z^2}{1+(y-z)^2}\leq C(1+y^2).
\]
Hence,
\begin{gather*}
|K(x,z)|
\leq \frac{C}{1+z^2} |\int_{\R^3}e^{\ii\left((x-y)\xi+(y-z)\eta\right)}\langle\xi\rangle^{s+1}\langle D_{\eta}\rangle^6\langle\eta\rangle^{-s-2}2\eta\frac{\left(\int_0^1\varrho'(\frac{\theta z+(1-\theta)y}{R})\diff\theta\right)}{1+(y-z)^2}\frac{1+y^2}{1+(y-z)^2}\diff y\diff\eta\diff\xi|
\end{gather*}
Let $1\leq k< s+1<k+1$. We note that $\frac{\langle\xi\rangle^{2k+2}}{\langle\xi-\eta\rangle^{2k+2}}\leq C\langle\eta\rangle^{2k+2}$ due to Peetre's inequality. As a consequence, using the oscillatory integral techniques again,
\begin{gather*}
|K(x,z)|
\leq \frac{C}{1+z^2} |\int_{\R^3}e^{\ii\left((x-y)\xi+(y-z)\eta\right)}\frac{\langle\xi\rangle^{s+1}}{\langle\xi-\eta\rangle^{2k+2}}\langle D_{\eta}\rangle^{2k+6}\langle\eta\rangle^{-s-2}\eta\\
\langle D_y\rangle^{2k+2}\frac{\left(\int_0^1\varrho'(\frac{\theta z+(1-\theta)y}{R})\diff\theta\right)}{(1+(y-z)^{2})^{k+1}}\frac{1+y^2}{1+(y-z)^2}\diff y\diff\eta\diff\xi|\\
\leq \frac{C}{R^{2k+2}(1+z^2)}\int_{\R^3}\langle\eta\rangle^{-s-5}\langle\xi\rangle^{s-2k-1}(1+y^2)^{-1-k}\diff y\diff\eta\diff\xi
\end{gather*}
Therefore, 
\[
\sup_{x}\int_{\R}|K(x,z)|\diff z<\frac{C}{R^{2s+4}}.
\]
Similarly, by the oscillatory integral techniques, we can derive that
\[
\sup_{z}\int_{\R}|K(x,z)|\diff x<C.
\]
As a consequence of Schur's Lemma, there exists a constant $C>0$ such that
\[
\|\langle D_x\rangle^{1+s}[\langle D_x\rangle^{-s},\varrho_R]f\|_{L^2}\leq \frac{C}{R^{s+4}}\|f\|_{L^2},
\]
from which we deduce that
\[
\|[\langle D_x\rangle^{-s},\varrho_R]\|_{\mathcal{L}(H^{-s-1},L^2)}\leq \frac{C}{R^{s+4}}.
\]
\end{proof}
\begin{remark}
The study of the commutator between a Fourier multiplier, represented as $a(D_x)$, and a multiplication operator $b(x)$, symbolically expressed as $[a(D_x),b(x)]$, holds significant importance in microlocal analysis. This interaction is intricately connected to the Calderon-Vaillancourt Theorem. For comprehensive details, one can consult the works \cite{Cordes,C-V}. Furthermore, it's noteworthy that in this context, the multiplication operator is parameterized by $R$. Specifically, the expression
\[
\|[\langle D_x\rangle^{-s},\varrho_R]\|_{\mathcal{L}(H^{-s-1},L^2)}\leq \frac{C}{R^{s+4}},
\]
illustrates the dependence, yet the rate in terms of $R$ may not be optimal.
\end{remark}
\subsection{Fractional Sobolev spaces}\label{sec: Sobolev–Slobodeckij spaces}
The approach to define fractional order Sobolev spaces arises from the idea to generalize the H\"older condition to the $L^p$-setting. For $d\geq1$, $1 \leq p < \infty$, $\theta \in (0,1)$ and $f \in L^p(\Omega)$, where $\Omega\subset\R^d$ is a measurable set, the so-called Slobodeckij seminorm (roughly analogous to the H\"older seminorm) is defined by
\[
[f]_{\theta,p,\Omega} = \left( \int_\Omega \int_\Omega \frac{|f(x) - f(y)|^p}{|x - y|^{\theta p + d}} \, dx \, dy \right)^{\frac{1}{p}}.
\]
In particular,  we focus exclusively on the $L^2-$setting in $\R$. We use $[f]_{\theta,\Omega}$ to denote
\[
[f]_{\theta,\Omega} = \left( \int_\Omega \int_\Omega \frac{|f(x) - f(y)|^2}{|x - y|^{2\theta  + n}} \, dx \, dy \right)^{\frac{1}{2}}.
\]
Let $s > 0$ be not an integer and set $\theta = s - \lfloor s \rfloor \in (0,1)$. The $L^2$-based fractional Sobolev space (or Sobolev-Slobodeckij space) $H^{s}(\Omega)$ is defined as
\[
H^{s}(\Omega) = \left\{ f \in H^{\lfloor s \rfloor}(\Omega) : \sup_{k = \lfloor s \rfloor} [\p_x^k f]_{\theta,\Omega} < \infty \right\}.
\]
It is a Banach space for the norm
\[
\|f\|_{H^{s}(\Omega)} = \|f\|_{H^{\lfloor s \rfloor}(\Omega)} + \sup_{k = \lfloor s \rfloor} [\p_x^k f]_{\theta,\Omega}.
\]
\begin{lemma}\label{lem: fractional-sobolev}
Let $s\in(0,\frac{1}{2})$ and $T>0$. Assume that $u\in H^{1+s}(\R_+)$, $u(0)=0$ and $\supp{u}\subset[0,T]$. Define 
\begin{equation*}
U(t)=\left\{
\begin{array}{ll}
   u(t)  ,&t>0,  \\
   0  ,&t\leq0. 
\end{array}
\right.
\end{equation*}
Then, there exists a constant $C>0$ such that any $u\in H^{1+s}(\R_+)$, with $\supp{u}\subset[0,T]$ and $u(0)=0$
\[
\|U\|_{H^{1+s}(\R)}\leq C\|u\|_{H^{1+s}(\R_+)}.
\]
\end{lemma}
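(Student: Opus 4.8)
The goal is to show that zero-extension from $\R_+$ to $\R$ is bounded on $H^{1+s}$ for $s\in(0,\frac12)$, provided $u(0)=0$ (the compatibility condition at the boundary is essential here, since without it the extension would not even be continuous and could not lie in $H^{1+s}\subset C^1$).

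The plan is to reduce everything to the two pieces of the Sobolev--Slobodeckij norm: the $H^1(\R)$ part and the Slobodeckij seminorm $[U']_{\theta,\R}$ with $\theta=s$. First I would observe that $U\in H^1(\R)$ with $\|U\|_{H^1(\R)}=\|u\|_{H^1(\R_+)}$: the only thing to check is that $U$ has no jump at $t=0$ and that $U'$ is the zero-extension of $u'$; both follow from $u(0)=0$ together with $u\in H^1(\R_+)\hookrightarrow C([0,\infty))$, so $U$ is continuous across $0$ and weak differentiation commutes with the extension. Hence it remains to bound $[U']_{\theta,\R}$. Writing $v=u'$ (so $v\in H^s(\R_+)$) and $V$ its zero-extension, the double integral $\int_\R\int_\R \frac{|V(t)-V(\sigma)|^2}{|t-\sigma|^{1+2\theta}}\,dt\,d\sigma$ splits into three regions: (i) $t,\sigma>0$, which is exactly $[v]_{\theta,\R_+}^2\le\|u\|_{H^{1+s}(\R_+)}^2$; (ii) $t,\sigma<0$, which contributes $0$; (iii) the mixed region $t>0>\sigma$ (and its symmetric copy), where $V(\sigma)=0$ so the integrand is $\frac{|v(t)|^2}{|t-\sigma|^{1+2\theta}}$.

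The heart of the matter is region (iii): I must show
\[
J:=\int_0^\infty\int_{-\infty}^0 \frac{|v(t)|^2}{(t-\sigma)^{1+2\theta}}\,d\sigma\,dt \le C\|v\|_{H^\theta(\R_+)}^2.
\]
Integrating out $\sigma$ first gives $\int_{-\infty}^0 (t-\sigma)^{-1-2\theta}\,d\sigma = \frac{1}{2\theta}\,t^{-2\theta}$, so $J=\frac{1}{2\theta}\int_0^\infty |v(t)|^2 t^{-2\theta}\,dt$, a weighted $L^2$ norm. This is precisely a one-dimensional Hardy-type inequality: since $0<2\theta<1$, the weight $t^{-2\theta}$ is locally integrable at $0$, and the fractional Hardy inequality states exactly that $\int_0^\infty |v(t)|^2 t^{-2\theta}\,dt \le C\,[v]_{\theta,\R_+}^2$ for $\theta\in(0,\tfrac12)$ (this is where the restriction $s<\tfrac12$ is used — it fails at $\theta=\tfrac12$). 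I would either cite this fractional Hardy inequality directly (e.g. from the Di~Nezza--Palatucci--Valdinoci survey or Grisvard's book) or give the short self-contained argument: split the inner integral over $|t-\sigma|<t$ and $|t-\sigma|\ge t$; on the far part one gets $t^{-2\theta}|v(t)|^2$ back with a harmless constant, and on the near part one uses $v(t)-v(\sigma)$ plus the triangle inequality, absorbing the $v(t)$ term and recognizing the remainder as part of $[v]_{\theta}^2$. Collecting (i)--(iii) yields $[U']_{\theta,\R}^2\le C\|u\|_{H^{1+s}(\R_+)}^2$, and combined with the $H^1$ bound this proves $\|U\|_{H^{1+s}(\R)}\le C\|u\|_{H^{1+s}(\R_+)}$.

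The main obstacle is the fractional Hardy inequality in region (iii); everything else is bookkeeping. Two subtleties deserve care: (a) making rigorous that $U'$ is genuinely the zero-extension of $u'$ as a distribution on all of $\R$ — this is where $u(0)=0$ enters, and one should note that if $u(0)\neq 0$ the distributional derivative of $U$ would contain a Dirac mass $u(0)\delta_0$, destroying even $H^1$ regularity; (b) confirming that the compact support $\supp u\subset[0,T]$ is not actually needed for the inequality itself (it only guarantees $u\in H^{1+s}(\R_+)$ makes sense as a statement about a function that vanishes near $+\infty$, and keeps all integrals finite), so the constant $C$ is genuinely independent of $T$ and of $u$.
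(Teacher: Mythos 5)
Your argument is correct and shares the paper's overall structure — split the Slobodeckij seminorm of the zero-extension into the on-diagonal region, the all-negative region, and the mixed region, observe the mixed region reduces to the weighted bound
\[
\int_0^\infty \frac{|u'(t)|^2}{t^{2s}}\,dt \lesssim \|u'\|_{H^s(\R_+)}^2,
\]
and note that $u(0)=0$ is exactly what prevents a Dirac mass in $U'$. Where you diverge from the paper is the mechanism for that Hardy bound: you invoke the \emph{half-line} fractional Hardy inequality directly, whereas the paper first forms the \emph{even reflection} $V$ of $u'$ across the origin, shows $\|V\|_{H^s(\R)}\le 2\|u'\|_{H^s(\R_+)}$ via the elementary inequality $|x+y|\ge|x-y|$ for $x,y>0$, and only then applies a \emph{whole-line} Hardy inequality from \cite{NgSq18} to $V$. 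Your route is shorter when the half-line form is available off the shelf; the paper's reflection step is a self-contained reduction to the perhaps more commonly cited whole-line statement.

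One caution worth fixing in your write-up: you state the Hardy inequality with only the seminorm on the right, $\int_0^\infty |v|^2 t^{-2\theta}\,dt\le C[v]_{\theta,\R_+}^2$. On the unbounded half-line this pure-seminorm form is delicate — constants make the left side infinite and the right side zero, so it cannot hold for all $v\in H^\theta_{\mathrm{loc}}$ and one must either impose decay/compact support with a scaling argument, or, as the paper does, put the full norm $\|v\|_{H^\theta}$ on the right. For the lemma the full-norm version is all you need, since $\|u'\|_{H^s(\R_+)}\le\|u\|_{H^{1+s}(\R_+)}$, and it gives a constant independent of $T$ exactly as you observe in point (b). Also, your aside in (b) that the compact support ``is not actually needed for the inequality itself'' is a little misleading in the pure-seminorm formulation for precisely the constant-function reason; it becomes harmless once you switch to the full-norm Hardy inequality.
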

\begin{proof}
By the definition of Sobolev--Slobodeckij spaces, 
\[
u\in H^{1+s}(\R_+)\Leftrightarrow u\in H^1(\R_+)\mbox{ and }[u']_{s,\R_+}<\infty.
\]
And the Sobolev--Slobodeckij norm is the following
\[
\|u\|^2_{H^{1+s}(\R_+)}=\|u\|^2_{H^1(\R_+)}+[u']^2_{s,\R_+}.
\]
Thanks to the fact that $\supp{u}\subset[0,T]$ and $u(0)=0$, we know that $u\in H^1_0(\R_+)$. Thus, the extension of $u$ by $0$ for $t<0$, i.e. $U$, is a function belonging to $H^1(\R)$ and
\[
\|U\|_{H^1(\R)}\leq \|u\|_{H^1(\R_+)}.
\]
Define 
\begin{equation*}
V(t)=\left\{
\begin{array}{ll}
     u'(t),&t>0,  \\
     0,&t=0,\\
     u'(-t),&t<0. 
\end{array}
\right.
\end{equation*}
It is clear that $V\in L^2(\R)$. We claim that $V\in H^s(\R)$, i.e., $[V]_{s,\R}<\infty$. Indeed, 
\begin{multline*}
[V]^2_{s,\R}=\int_{\R\times\R}\frac{|V(x)-V(y)|^2}{|x-y|^{1+2s}}\diff x\diff y\\
=\int_{\R_+\times\R_+}\frac{|V(x)-V(y)|^2}{|x-y|^{1+2s}}\diff x\diff y+2\int_{\R_+\times\R_-}\frac{|V(x)-V(y)|^2}{|x-y|^{1+2s}}\diff x\diff y+\int_{\R_-\times\R_-}\frac{|V(x)-V(y)|^2}{|x-y|^{1+2s}}\diff x\diff y.
\end{multline*}
Using the definition of $V$, we know that
\begin{gather*}
[V]^2_{s,\R}
=2\int_{\R_+\times\R_+}\frac{|u'(x)-u'(y)|^2}{|x-y|^{1+2s}}\diff x\diff y+2\int_{\R_+\times\R_-}\frac{|u'(x)-u'(-y)|^2}{|x-y|^{1+2s}}\diff x\diff y\\
=2[u']^2_{s,\R_+}+2\int_{\R_+\times\R_+}\frac{|u'(x)-u'(y)|^2}{|x+y|^{1+2s}}\diff x\diff y.
\end{gather*}
Since $|x+y|^2\geq |x-y|^2$, we know that $|x+y|^{1+2s}\geq |x-y|^{1+2s}$ for $s\in(0,1)$. Therefore,
\begin{align*}
[V]^2_{s,\R}&=2[u']^2_{s,\R_+}+2\int_{\R_+\times\R_+}\frac{|u'(x)-u'(y)|^2}{|x+y|^{1+2s}}\diff x\diff y\\
&\leq 2[u']^2_{s,\R_+}+2\int_{\R_+\times\R_+}\frac{|u'(x)-u'(y)|^2}{|x-y|^{1+2s}}\diff x\diff y\\
&=4[u']^2_{s,\R_+}<\infty.
\end{align*}
Hence, $V\in H^{s}(\R)$ and $\|V\|_{H^s(\R)}\leq 2\|u'\|_{H^s(\R_+)}$. Indeed, we know that $C^{\infty}_c(\R)$ is dense in $H^s(\R)$ for any $s>0$. Thanks to the Hardy inequality in \cite{NgSq18}, combining with a standard approximation argument, we derive that for $s\in(0,\frac{1}{2})$,
\[
\||\cdot|^{-s}V(\cdot)\|_{L^2(\R)}\leq C\|V\|_{H^s(\R)}\leq C\|u'\|_{H^s(\R_+)}.
\]
Therefore, $\||\cdot|^{-s}V(\cdot)\|_{L^2(\R)}\leq C\|u'\|_{H^s(\R_+)}$. Using again the definition of $V$,
\[
\||\cdot|^{-s}V(\cdot)\|^2_{L^2(\R)}=2\int_{\R_+}\frac{|u'(t)|^2}{|t|^{2s}}\diff t,
\]
which implies that for $s\in(0,\frac{1}{2})$, $\int_{\R_+}\frac{|u'(t)|^2}{|t|^{2s}}\diff t\leq C\|u'\|_{H^s(\R_+)}$. Now we look at the seminorm $[U']_{s,\R}$. By definition,  
\begin{align*}
[U']^2_{s,\R}&=\int_{\R\times\R}\frac{|U'(x)-U'(y)|^2}{|x-y|^{1+2s}}\diff x\diff y\\
&=\int_{\R_+\times\R_+}\frac{|U'(x)-U'(y)|^2}{|x-y|^{1+2s}}\diff x\diff y+2\int_{\R_+\times\R_-}\frac{|U'(x)-U'(y)|^2}{|x-y|^{1+2s}}\diff x\diff y+\int_{\R_-\times\R_-}\frac{|U'(x)-U'(y)|^2}{|x-y|^{1+2s}}\diff x\diff y.
\end{align*}
Since $U'=u'\mathbf{1}_{\R_+}$. Since $U'=0$ on $(-\infty,0)$ and $U'=u'$ on $(0,+\infty)$,
\begin{align*}
[U']^2_{s,\R}
&=\int_{\R_+\times\R_+}\frac{|u'(x)-u'(y)|^2}{|x-y|^{1+2s}}\diff x\diff y+2\int_{\R_+\times\R_-}\frac{|u'(x)|^2}{|x-y|^{1+2s}}\diff x\diff y\\
&=[u']^2_{s,\R_+}+\frac{1}{s}\int_{\R_+}\frac{|u'(x)|^2}{|x|^{2s}}\diff x\\
&\leq C[u']^2_{s,\R_+}
\end{align*}
Consequently,
\[
\|U\|^2_{H^{1+s}(\R)}=\|U\|^2_{H^1(\R)}+[U']^2_{s,\R}\leq C\left(\|u\|^2_{H^1(\R_+)}+[u']^2_{s,\R_+}\right)\leq C\|u\|^2_{H^{1+s}(\R_+)}.
\]
\end{proof}
\begin{corollary}\label{cor: s-Sobolev}
Let $s\in(0,\frac{1}{2})$ and $T>0$. Assume that $u\in H^s(\R_+)$, $\supp{u}\subset[0,T]$. Define 
\begin{equation*}
U(x)=\left\{
\begin{array}{ll}
   u(x)  ,&t>0,  \\
   0  ,&t\leq0. 
\end{array}
\right.
\end{equation*}
Then, there exists a constant $C>0$ such that for any $u\in H^s(\R_+)$, with $\supp{u}\subset[0,T]$,
\[
\|U\|_{H^{s}(\R)}\leq C\|u\|_{H^{s}(\R_+)}.
\]
\end{corollary}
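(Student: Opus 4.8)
The plan is to run the same argument as in the proof of Lemma \ref{lem: fractional-sobolev}, but one regularity level lower, so that no trace obstruction appears. Write $\|U\|_{H^s(\R)}^2 = \|U\|_{L^2(\R)}^2 + [U]_{s,\R}^2$, where $[\,\cdot\,]_{s,\R}$ is the Slobodeckij seminorm. The $L^2$ part is immediate, $\|U\|_{L^2(\R)} = \|u\|_{L^2(\R_+)}$. For the seminorm I would split the integral over $\R\times\R$ into the three regions $\R_+\times\R_+$, $\R_+\times\R_-$ and $\R_-\times\R_-$. The last one vanishes since $U\equiv 0$ on $(-\infty,0)$; the first one is exactly $[u]_{s,\R_+}^2$; and on $\R_+\times\R_-$, using $U(y)=0$ for $y<0$ and the elementary computation $\int_{-\infty}^0 (x-y)^{-1-2s}\,dy = \tfrac{1}{2s}\,x^{-2s}$ for $x>0$, one is left with
\[
2\int_{\R_+}\!\int_{\R_-}\frac{|u(x)|^2}{|x-y|^{1+2s}}\,dy\,dx = \frac1s\int_{\R_+}\frac{|u(x)|^2}{x^{2s}}\,dx .
\]
Hence the whole estimate reduces to the one-dimensional Hardy-type inequality $\int_{\R_+} x^{-2s}|u(x)|^2\,dx \lesssim \|u\|_{H^s(\R_+)}^2$, which is exactly the point where the hypothesis $s<\tfrac12$ is used.

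To establish this Hardy inequality I would reuse the reflection trick already employed for $u'$ in Lemma \ref{lem: fractional-sobolev}: extend $u$ to $\tilde u$ on $\R$ by even reflection, $\tilde u(x):=u(|x|)$. Splitting $[\tilde u]_{s,\R}^2$ into the same three regions and using the pointwise bound $|x+y|\ge |x-y|$ (so $|x+y|^{1+2s}\ge|x-y|^{1+2s}$ for $s\in(0,1)$) gives $[\tilde u]_{s,\R}^2 \le 4[u]_{s,\R_+}^2$, whence $\tilde u\in H^s(\R)$ with $\|\tilde u\|_{H^s(\R)}\le C\|u\|_{H^s(\R_+)}$. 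Applying the Hardy inequality $\||\cdot|^{-s}v\|_{L^2(\R)}\le C\|v\|_{H^s(\R)}$ for $s\in(0,\tfrac12)$ (see \cite{NgSq18}, after approximation by $C_c^\infty(\R)$, which is dense in $H^s(\R)$) to $v=\tilde u$ and noting $\||\cdot|^{-s}\tilde u\|_{L^2(\R)}^2 = 2\int_{\R_+} x^{-2s}|u(x)|^2\,dx$, one concludes. Collecting the pieces yields
\[
\|U\|_{H^s(\R)}^2 = \|u\|_{L^2(\R_+)}^2 + [u]_{s,\R_+}^2 + \frac1s\int_{\R_+}\frac{|u(x)|^2}{x^{2s}}\,dx \le C\|u\|_{H^s(\R_+)}^2 .
\]

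The computation is entirely routine; the only step that deserves a little care is the Hardy estimate, i.e. justifying $\int_{\R_+}x^{-2s}|u|^2\,dx\lesssim\|u\|_{H^s(\R_+)}^2$, and this is precisely where the restriction $s<\tfrac12$ is essential (for $s\ge\tfrac12$ the zero extension is not bounded and a boundary term would appear). Since this Hardy inequality was already needed and invoked inside the proof of Lemma \ref{lem: fractional-sobolev}, no genuinely new ingredient is required: the corollary is a stripped-down version of that lemma, and in the write-up it suffices to indicate which terms of the previous proof survive. Note also that the support hypothesis $\supp u\subset[0,T]$ is not used in an essential way — near infinity the relevant integral is controlled by $\|u\|_{L^2(\R_+)}^2$ alone — but it is convenient to keep it in the statement for consistency with the application in Section \ref{sec: obstruction}.
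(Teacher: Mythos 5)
Your argument is correct and is exactly the intended one: the paper states this as a corollary without an explicit proof, and the natural downshift of Lemma~\ref{lem: fractional-sobolev} by one derivative is precisely what you carried out (split the Slobodeckij seminorm of $U$ into the three regions, observe that the $\R_-\times\R_-$ part vanishes, the $\R_+\times\R_+$ part is $[u]_{s,\R_+}^2$, and the cross term reduces to $\tfrac{1}{s}\int_{\R_+}x^{-2s}|u|^2\,dx$, which is then controlled by the even-reflection bound $[\tilde u]_{s,\R}^2\le 4[u]_{s,\R_+}^2$ together with the Hardy inequality from \cite{NgSq18}, valid because $s<\tfrac12$). Your remark that the compact-support hypothesis plays no essential role here, while the $u(0)=0$ condition in the $H^{1+s}$ case was what really mattered, is also accurate.
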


\subsection{Eigenmodes at critical lengths}\label{sec: proof of prop-eigenmodes}
\begin{proof}[Proof of Proposition \ref{prop: type-1-2}]
Let $\G$ satisfy the equation 
\begin{equation}
\left\{
\begin{array}{ll}
     \G'''+\G'+\ii\lambda_{c}\G=0,  & \text{ in }(0,L),\\
     \G(0)=\G(L)=0,& \\
     \G'(0)=\G'(L),& 
\end{array}
\right.       
\end{equation}
with $\lambda_c=\frac{(2k+l)(k-l)(2l+k)}{3\sqrt{3}(k^2+k l+l^2)^{\frac{3}{2}}}$ and $L=2\pi\sqrt{\frac{k^2+k l+l^2}{3}}$. Then the general form of eigenfunctions is as follows
\begin{equation*}
\G(x):= e^{\ii x\frac{\sqrt{3} (2 k + l) }{3\sqrt{k^2 + k l + l^2}}}  +C_1 e^{-\ii x\frac{\sqrt{3} (k + 2 l) }{3 \sqrt{k^2 + k l + l^2}}}  +C_2 e^{\ii x\frac{\sqrt{3} (-k + l)}{ \sqrt{k^2 + k l + l^2}}}.
\end{equation*}
Using the boundary conditions, we obtain
\begin{equation*}
\left\{
\begin{array}{l}
     1+C_1+C_2=0,  \\
   -\frac{\ii((-2+C_1+C_2)k++(-1+2C_1-C_2) l)}{\sqrt{3}\sqrt{k^2 + k l + l^2}}=\frac{\ii e^{-\ii\frac{2}{3}(k+2l)\pi}((-2+C_1+C_2)k++(-1+2C_1-C_2) l)}{\sqrt{3}\sqrt{k^2 + k l + l^2}} .
\end{array}
\right.
\end{equation*}
There are two situations,
\begin{enumerate}
    \item If $e^{-\ii\frac{2}{3}(k+2l)\pi}\neq1\Leftrightarrow (k-l)\not\equiv0\mod{3}$, we know that $C_1=\frac{k}{l}$ and $C_2=-\frac{l+k}{l}$. In this case, we have
    \begin{equation*}
    \G(x)= e^{\ii x\frac{\sqrt{3} (2 k + l) }{3\sqrt{k^2 + k l + l^2}}}  +\frac{k}{l} e^{-\ii x\frac{\sqrt{3} (k + 2 l) }{3 \sqrt{k^2 + k l + l^2}}}  -\frac{l+k}{l} e^{\ii x\frac{\sqrt{3} (-k + l)}{ \sqrt{k^2 + k l + l^2}}},    
    \end{equation*}
    with $\G'(0)=\G'(L)=0$.
    \item If $e^{-\ii\frac{2}{3}(k+2l)\pi}=1\Leftrightarrow (k-l)\equiv0\mod{3}$, we know $C_2=-(1+C_1)$ and the second equation is trivial. In fact, we obtain two linearly independent solutions $\G$ as before and $\Tilde{\G}$ in the following form
    \begin{equation*}
    \Tilde{\G}(x)= e^{\ii x\frac{\sqrt{3} (2 k + l) }{3\sqrt{k^2 + k l + l^2}}}  - e^{-\ii x\frac{\sqrt{3} (k + 2 l) }{3 \sqrt{k^2 + k l + l^2}}},     
    \end{equation*}
    with $\Tilde{\G}'(0)=\Tilde{\G}'(L_0)=\ii\frac{\sqrt{3}(k+l)}{\sqrt{k^2+k l+l^2}}\neq0$.
\end{enumerate}
\end{proof}
\cqfd

\bibliographystyle{alpha}
\bibliography{ref.bib}
\end{document}